\documentclass[10pt]{article}
\usepackage{amscd,amssymb,amsmath,verbatim,amsthm}
\usepackage{hyperref}
\usepackage{times}
\usepackage{graphicx}
\usepackage{enumerate}
\usepackage[usenames]{color}

\definecolor{r}{rgb}{.6,0,.3}
\definecolor{gr}{rgb}{0,0.6,.3}

\newcounter{intro}

\newtheorem{thm}{Theorem}[section]
\newtheorem{lem}[thm]{Lemma}
\newtheorem{prop}[thm]{Proposition}
\newtheorem{cor}[thm]{Corollary}

\newtheorem{remark}[thm]{Remark}

\newtheorem{rem}[thm]{Remark}

\numberwithin{equation}{section}   

\newcounter{counteroman}


\newcommand{\cref}[1]{Corollary~\ref{#1}}

\newcommand{\R}{\mathbb{R}}

\newcommand{\bpm}{\begin{pmatrix}}
\newcommand{\epm}{\end{pmatrix}}

\DeclareMathOperator{\ricci}{Ric}




\numberwithin{equation}{section}


\renewcommand{\tilde}{\widetilde}

\renewcommand{\hat}[1]{\widehat{#1}}









\newcommand\ev{\operatorname{even}}
\newcommand\even{\operatorname{ev}}

\newcommand\Ric{\operatorname{Ric}}

\newcommand\Tr{\operatorname{Tr}}



\newcommand\Mas{\text{ as }}

\newcommand\paperintro%
        {%
         }
\newcommand\paperbody%
        {%
         }




\DeclareMathAlphabet{\mathpzc}{OT1}{pzc}{m}{it}



\newcommand{\HH}{\mathbb H}

\newcommand{\RR}{\mathbb R}
\newcommand{\ZZ}{\mathbb Z}
\newcommand{\del}{\partial}

\newcommand{\e}{\epsilon}
\newcommand{\calA}{{\mathcal A}}
\newcommand{\calC}{{\mathcal C}}
\newcommand{\calD}{{\mathcal D}}
\newcommand{\calE}{{\mathcal E}}
\newcommand{\calF}{{\mathcal F}}

\newcommand{\calK}{{\mathcal K}}

\newcommand{\calM}{{\mathcal M}}
\newcommand{\calN}{{\mathcal N}}
\newcommand{\calO}{{\mathcal O}}

\newcommand{\calR}{{\mathcal R}}

\newcommand{\calT}{{\mathcal T}}
\newcommand{\calU}{{\mathcal U}}





\begin{document}
\title{Minimal surfaces with positive genus and finite total curvature
  in $\HH^2\times \RR$} \author{Francisco Mart\'{\i}n \\ University of
  Granada \and Rafe Mazzeo \\Stanford University \and M. Magdalena
  Rodr\'iguez \\ University of Granada}




\maketitle

\begin{abstract}
  We construct the first examples of complete, properly embedded
  minimal surfaces in $\HH^2 \times \RR$ with finite total curvature
  and positive genus.  These are constructed by gluing copies of
  horizontal catenoids or other nondegenerate summands. We also
  establish that every horizontal catenoid is nondegenerate.
\end{abstract}

\section{Introduction}
Amidst the great activity in the past several years concerning the
existence and nature of complete minimal surfaces in homogeneous
three-manifolds, the study of minimal surfaces in $\HH^2 \times \RR$
has witnessed particular success. The central problem is the
solvability of the asymptotic Dirichlet problem, i.e.\ the existence
of complete surfaces asymptotic to a given embedded curve $\gamma$ in
the boundary of the compactification of this space $B^2 \times I$,
where $B^2$ is the closure of the Poincar\'e ball model of $\HH^2$ in
$\RR^2$ and the interval $I$ is the stereographic compactification of
$\RR$.

There have been three main approaches to this problem. The first is
based on the method of Anderson \cite{And} for the analogous problem
in $\HH^3$: one defines a sequence of curves $\gamma_R$ lying on the
geodesic sphere of radius $R$ around some point, solves the Plateau
problem for each of these curves, then attempts to take a limit as $R
\to \infty$. The main points are to show that the sequence of minimal
surfaces with boundary does not drift off to infinity and that the
limit has $\gamma$ as its asymptotic boundary curve; both of these are
accomplished using suitable barrier surfaces, the existence and nature
of which depends upon the convexity of $\HH^3$ at infinity.  This
approach has also been used successfully for the analogous asymptotic
Plateau problem in higher dimensions and codimensions for various
classes of nonpositively curved spaces. The second approach
generalizes the classical method of Jenkins and Serrin~\cite{JS} for
minimal graphs in $\R^3$, and was developed in this setting by Nelli
and Rosenberg~\cite{ner2}, Collin and Rosenberg~\cite{cor2} and Mazet,
Rosenberg and the third author~\cite{marr1}. This involves finding a
minimal graph over domains of $\HH^2$ with prescribed boundary data,
possibly $\pm\infty$.  The third approach is by an analytic gluing
construction, and this is the method we follow here.

Before describing our work, let us draw attention to the issue of
obtaining surfaces with finite total curvature. (We recall that the
total curvature of a surface is defined as the integral on the surface
of its Gauss curvature.)  It turns out to be far easier to obtain
complete minimal surfaces of finite topology in $\HH^2 \times \RR$
with infinite total curvature, and we refer to some of the papers
above for a good (but not yet definitive) existence theory. The
simplest example is the slice $\HH^2 \times \{0\}$, but more generally
there exist minimal surfaces asymptotic to a vertical graph
$\{(\theta, f(\theta)) \colon \theta \in \mathbb{S}^1\} \subset \del
B^2 \times \RR$ for any $f \in \calC^1(\mathbb{S}^1)$. Other examples
include the one-parameter family of Costa-Hoffman-Meeks type surfaces,
each asymptotic to three parallel horizontal copies of $\HH^2$. These
have positive genus and were constructed by Morabito~\cite{mo} also
using a gluing method.  On the other hand, surfaces of finite total
curvature have proved to be more elusive. The basic examples are the
vertical plane $\gamma \times \RR$, where $\gamma$ is a complete
geodesic in $\HH^2$, and the Scherk minimal graphs over ideal polygons
constructed by Nelli and Rosenberg~\cite{ner2}, and Collin and
Rosenberg~\cite{cor2}.  There is also a family of {\it horizontal
  catenoids} $K_\eta$ constructed in~\cite{moro1,pyo1} (called {\it
  2-noids} in~\cite{moro1}), each consisting of a catenoidal handle
which is orthogonal to the vertical direction, and asymptotic to two
disjoint vertical planes which are neither asymptotic nor too widely
separated. The recent paper \cite{hnst} shows that these are the
unique complete minimal surfaces with finite total curvature and two
ends asymptotic to vertical planes. A large number of other examples
of genus zero have been constructed recently by Pyo~\cite{pyo1}, and
Morabito and the third author~\cite{moro1}, independently. Both papers
use the conjugate surface method.  The theory of conjugate minimal
surfaces in $\HH^2 \times \RR$ was elaborated by Daniel~\cite{da2} and
Hauswirth, Sa~Earp and Toubiana~\cite{hst1}. The surfaces
in~\cite{moro1,pyo1} are shown to have total curvature $-4 \pi(k-1)$,
where $k$ is the number of ends, and each end is asymptotic to a
vertical plane. The horizontal catenoids, which have total curvature
$-4\pi$, are a special case.

Despite all this progress, it has remained open whether there exist
complete, properly embedded minimal surfaces in $\HH^2 \times \RR$
with finite total curvature and positive genus.  The aim of this paper
is to construct such surfaces, which we do by gluing together certain
configurations of horizontal catenoids.  There is a dichotomy in the
types of configurations one may glue together.  The ones for which the
horizontal catenoid components have ``necksize'' bounded away from
zero are simpler to handle, and the gluing construction in this case
is quite elementary; the trade-off is that the minimal surfaces
obtained using only this type of component have a very large number of
ends relative to the genus.  Alternatively, one may glue together
horizontal catenoids with very small necksizes, which allows one to
obtain viable configurations with relatively few ends for a given
genus. Unfortunately this turns out to involve more analytic details
because these the horizontal catenoids with very small necks are
`nearly degenerate', and because of this we will address this second
case in a sequel to this paper.

Our main result here is the
\begin{thm}
  For each $g \geq 0$, there is a $k_0 = k_0(g)$ such that if $k \geq
  k_0$, then there exists a properly embedded minimal surface with
  finite total curvature in $\HH^2 \times \RR$, with genus $g$ and $k$
  ends, each asymptotic to a vertical plane.
\label{maingt}
\end{thm}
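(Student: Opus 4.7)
The plan is to follow the standard analytic gluing scheme: assemble copies of horizontal catenoids $K_\eta$ into an approximately minimal surface $\Sigma_0$ of the desired topological type, then perturb $\Sigma_0$ to a true minimal surface via an implicit function argument whose crucial ingredient is the nondegeneracy of the horizontal catenoid stated in the abstract. First I would fix a combinatorial graph $\Gamma$ with $k$ leaves and first Betti number $g$, whose vertices correspond to vertical planes in $\HH^2\times\RR$ and whose internal edges correspond to horizontal catenoids bonding a pair of such planes; the leaves become the free asymptotic ends of the final surface. Since the existence of a horizontal catenoid $K_\eta$ asymptotic to a prescribed pair of vertical planes requires those planes to be neither asymptotic nor too widely separated, one places the vertical planes with moderate, uniformly bounded separation. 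The need to accommodate $g$ independent cycles with edges drawn from this restricted pool forces the vertex count, and therefore $k$, to grow with $g$, which accounts for the threshold $k_0(g)$.

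With the configuration fixed, the approximate solution is produced by truncating each catenoid and each vertical plane in a compact region, then splicing the truncated pieces together in small transition annuli using a cutoff function. By spacing the catenoid necks far apart, and by ensuring that distinct catenoid ends hitting the same vertical plane are centered far from one another along that plane, $\Sigma_0$ is embedded, has genus $g$ and $k$ planar ends, and has mean curvature $H_0$ supported in the transition annuli, which is small in a suitable weighted Hölder norm when the separation parameters are large. The true minimal surface is then sought as a normal graph $\Sigma_u=\{\exp_p(u(p)\nu(p)):p\in\Sigma_0\}$, so that the mean curvature equation takes the form $\mathcal H(u)=H_0+Lu+Q(u)=0$, with $L$ the Jacobi operator of $\Sigma_0$ and $Q$ quadratically small in $u$.

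The core analytic step is constructing a right inverse $G$ for $L$ on appropriate weighted spaces, uniformly bounded as the configuration is separated. On each catenoidal summand this is where the paper's nondegeneracy theorem enters: $L$ is an isomorphism on these spaces modulo the finite-dimensional space of bounded Jacobi fields coming from the isometries of $\HH^2\times\RR$ that move the catenoid and from the parameter $\eta$; on each vertical-plane summand $L$ is a small perturbation of the flat Laplacian on a half-plane, also invertible with appropriate weights. A standard cut-and-paste combines these local inverses into $G$, with the finite-dimensional obstructions absorbed by varying the geometric moduli of the gluing (positions of the planes and parameters $\eta_i$). The contraction mapping principle applied to $u=-G(H_0+Q(u))$ then yields $u$ when the configuration is sufficiently separated. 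Embeddedness and properness of $\Sigma_u$ follow from those of $\Sigma_0$ together with smallness of $u$ in $C^1$, and finite total curvature follows from the planar-asymptotic structure of each end.

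The main obstacle is this uniform invertibility of $L$ modulo the geometric moduli. One must pick weighted Hölder spaces strong enough to control $u$ globally—hence to preserve embeddedness and curvature integrability—yet weak enough to capture every source of approximate kernel, and adapted to both types of ends: the free ends asymptotic to vertical planes and the transition regions where matched catenoid ends meet planes. One must also verify, as the catenoid necks are pushed apart, that the approximate kernel of $L$ on $\Sigma_0$ is exhausted by the variations along the geometric moduli, so that adjusting those moduli kills the finite-dimensional obstruction rather than obstructing the iteration. This combined linear analysis and bookkeeping of moduli is the technical heart of the argument; once it is in place, the remaining steps are essentially routine.
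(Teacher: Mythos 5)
Your overall scheme---encode the configuration by a network of vertical planes joined by catenoidal necks, build an approximately minimal surface from truncated horizontal catenoids, invert the Jacobi operator on weighted H\"older spaces with a patched parametrix, and close with a contraction mapping---is the same as the paper's, and your explanation of the threshold $k_0(g)$ (hyperbolic geometry forbids cycles whose sides alternate between long separations and necks of size bounded in $(0,\eta_0)$ unless the number of sides grows) matches Proposition~\ref{prop:necksizes}. The genuine gap is in the linear analysis. You describe the obstruction as ``the finite-dimensional space of bounded Jacobi fields coming from the isometries of $\HH^2\times\RR$ that move the catenoid and from the parameter $\eta$,'' to be absorbed by varying the positions of the planes and the $\eta_i$. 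But those geometric Jacobi fields grow like $e^{r}$ along the planar ends, so on the exponentially decaying weighted spaces one must use (to keep $u$ small, preserve embeddedness and retain the planar asymptotics) they contribute neither kernel nor cokernel, and there is nothing for those moduli to cancel. The Jacobi field that actually obstructs invertibility is the one generated by \emph{vertical translation}, $\Phi_t$: it decays like $e^{-r}$, is the unique $L^2$ Jacobi field on every horizontal catenoid (Proposition~\ref{specnondeg}), and survives as an exact or approximate kernel element on each summand of the glued surface. It cannot be removed by moving the planes or adjusting the $\eta_i$, and your proposal never mentions it; as written, your ``uniformly bounded right inverse'' does not exist on the spaces you describe.

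The paper's remedy is to impose the reflection symmetry $\calR_t$ throughout: configurations, approximate surfaces, function spaces and the perturbation are all taken even with respect to $t\mapsto -t$. On the even subspace $e^{\kappa R}\calC^{2,\mu}_{\even}$ with $\kappa\in(-1,0)$, Theorem~\ref{nondeghc} (horizontal nondegeneracy of $K_\eta$, proved via Proposition~\ref{specnondeg}) gives honest invertibility of the Jacobi operator of each catenoid summand; the patched parametrix then yields a uniformly bounded inverse of $L_\Sigma$ for large neck separation (Proposition~\ref{invertible} and Corollary~\ref{cor:G}), and no Lyapunov--Schmidt reduction, balancing, or flux condition is needed at all---the paper points out explicitly that fluxes play no role here. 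If you insist on avoiding the symmetry restriction, you would instead have to enlarge the parameter space by relative vertical translations of the summands and run a genuine Lyapunov--Schmidt argument against $\Phi_t$; either way this step is absent from your argument. Two smaller discrepancies: the paper glues truncated horizontal catenoids directly to one another (rather than splicing separate plane and neck pieces), and every vertical plane in the configuration contributes an end asymptotic to that entire plane, so the number of ends equals the number of geodesic lines, not the number of ``leaves'' of your graph.
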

The proof involves gluing together component minimal surfaces which
are nondegenerate in the sense that they have no decaying Jacobi
fields. Unfortunately, every minimal surface in $\HH^2 \times \RR$
with each end asymptotic to a vertical plane is degenerate since
vertical translation (i.e.\ in the $\RR$ direction) always generates
such a Jacobi field. Because of this we shall work within the class of
surfaces which are symmetric with respect to a fixed horizontal plane
$\HH^2 \times \{0\}$ and then it suffices to work with surfaces which
are {\it horizontally nondegenerate} in the sense that they possess no
decaying Jacobi fields which are even with respect to the reflection
across this horizontal plane. The surfaces obtained in
Theorem~\ref{maingt} are all even with respect to the vertical
reflection, and all are horizontally nondegenerate as well.

This leads to the problem of showing that there are component minimal
surfaces which satisfy this condition, and our second main result
guarantees that many such surfaces exist.
\begin{thm}
  Each horizontal catenoid $K_\eta$ is horizontally nondegenerate.
\label{nondeghc}
\end{thm}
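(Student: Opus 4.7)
}
Let $\sigma_h$ denote reflection across $\HH^2 \times \{0\}$, and let $u$ be a $\sigma_h$-even Jacobi field on $K_\eta$ decaying at infinity. I aim to show $u \equiv 0$ by combining asymptotic analysis at the ends with a Green's-identity pairing argument.

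First, each end $E_j$ of $K_\eta$ is an exponentially small normal graph over a vertical plane $\Pi_j = \gamma_j \times \RR$. Since $\Pi_j$ is totally geodesic and flat, with horizontal unit normal satisfying $\ricci(N^{\Pi_j}, N^{\Pi_j}) = -1$, the Jacobi operator $L = \Delta + |A|^2 + \ricci(N,N)$ is asymptotic at $E_j$ to the constant-coefficient flat operator $L_0 = \partial_s^2 + \partial_t^2 - 1$, where $s$ is hyperbolic arclength along $\gamma_j$ and $t$ is the $\RR$-coordinate. A Fourier decomposition in $t$ produces indicial roots $\pm\sqrt{1+\xi^2}$ per mode, and every decaying Jacobi field admits an asymptotic expansion at $E_j$ whose leading term is $c_j\,e^{-s}$ (from the $\xi = 0$ mode); $\sigma_h$-evenness restricts the Fourier content to $\cos(\xi t)$.

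Second, I pair $u$ against horocyclic Killing fields. For each $a \in \{1,2\}$, let $X_a$ be the horocyclic Killing field of $\HH^2$ fixing the ideal endpoint of $\gamma_{3-a}$ at which end $E_{3-a}$ terminates. Because $X_a$ is horizontal, $v_a = \langle X_a, N\rangle$ is $\sigma_h$-even, and a Fermi-coordinate computation in (say) the upper half-plane model shows that $v_a$ has leading behavior $\alpha_a\,e^{+s}$ at $E_a$ (with $\alpha_a \ne 0$) and $\sim e^{-s}$ at the other end. Green's identity applied to $u$ and $v_a$ on the compact exhaustion $\Omega_{R,T} = K_\eta \cap \{s_j < R,\ |t| < T\}$ gives a vanishing surface integral equal to a sum of boundary contributions; at the cutoff $\{s_a = R\}$ the integrand is approximately the constant Wronskian $2 c_a \alpha_a$ from the $\xi=0$ mode, producing a contribution growing linearly as $4 T c_a \alpha_a$, while the top/bottom and far-end contributions remain bounded or oscillate with $T$ (via Riemann--Lebesgue applied to the higher Fourier modes). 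Sending $R,T\to\infty$ forces $c_1=c_2=0$.

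The higher-mode coefficients $c_j(\xi)$, $\xi>0$, are killed inductively: for each $\xi>0$, construct a Jacobi field with prescribed leading data $e^{+\sqrt{1+\xi^2}s}\cos(\xi t)$ at one end by Fredholm extension on weighted Hölder spaces, then pair $u$ against it to conclude $c_j(\xi)=0$. The Fredholm solvability at each step is guaranteed by the vanishing of lower-mode coefficients already obtained, so the induction closes. After every asymptotic coefficient of $u$ vanishes, Aronszajn's unique continuation yields $u\equiv 0$. The principal analytic obstacle is this inductive Fredholm construction and the careful verification that its solvability hypotheses mesh with the induction; the additional $\mathbb{Z}_2$-symmetries of $K_\eta$ (reflection across the vertical plane containing its axis, and across the perpendicular vertical plane through the midpoint of the axis) make the argument cleanest by reducing to sectors in which the pairing matrix of Step 2 becomes diagonal.
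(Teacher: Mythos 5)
Your proposed argument has genuine gaps, and the most serious one is structural. First, the asymptotic picture at the ends is not the one you describe: each end of $K_\eta$ is an exterior region in a vertical plane, so the model operator is $\Delta_{\RR^2}-1$ on the complement of a compact set, not an operator on a cylinder. There is no discrete (or even separated) family of indicial roots $\pm\sqrt{1+\xi^2}$ governing an expansion in $s$; a decaying even Jacobi field has leading behaviour $F^-(\theta)\,r^{-1/2}e^{-r}$ where $F^-$ is an a priori arbitrary (distributional) function on the circle at infinity, cf.\ Proposition~\ref{genasymprop} and the far-field expansion \eqref{farfield}. Consequently, pairing $u$ against the finitely many Jacobi fields generated by Killing fields (horocyclic or otherwise) or by moving ideal endpoints can only kill finitely many moments of $F^-$; it cannot force $F^-\equiv 0$. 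The claimed linear-in-$T$ Wronskian contribution from a ``$\xi=0$ mode'' is also spurious: $u$ decays in \emph{all} directions of the plane (including $|t|\to\infty$), and the pairing with an $e^{s}$-type growing field has a finite limit concentrated near $\theta=0$, so no divergence forces $c_a=0$ in the way you assert.

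Second, the step meant to dispose of the remaining coefficients is circular. To produce a Jacobi field with prescribed growing leading data at an end you must solve $L_\eta w=-f$ with $f$ decaying, and on the relevant growing weighted spaces the cokernel of $L_\eta$ is dual to precisely the space of decaying even Jacobi fields: solvability is obstructed by the very field $u$ you are trying to show vanishes, and the obstruction $\int_{K_\eta} f\,u$ is not expressible in terms of ``lower-mode coefficients'' of $u$ already shown to vanish (nor is an induction over the continuum $\xi>0$ meaningful). Even granting that all asymptotic coefficients of $u$ vanish, Aronszajn's theorem is a local unique continuation result and does not by itself yield $u\equiv 0$ from infinite-order decay at infinity; one would need a Carleman-type argument there. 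More tellingly, your scheme uses nothing about $K_\eta$ beyond its symmetries, so if it worked it would ``prove'' nondegeneracy of any symmetric surface with planar ends, which cannot be expected. The paper's proof (Proposition~\ref{specnondeg}) supplies the necessary geometric input: positivity of the dilation-generated Jacobi fields $\Phi_s,\Phi_o$ on the corresponding halves of $K_\eta$ (the horizontal bigraph property, Proposition~\ref{horbigraph}), the Murata-type Lemma~\ref{gclem} ruling out a zero ground-state eigenvalue on the odd subspaces, a nodal-domain argument on the totally even subspace, and a continuity-in-$\eta$ argument anchored at the strictly stable Scherk limit as $\eta\nearrow\eta_0$.
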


Our final result concerns the deformation theory of this class of
surfaces.
\begin{thm}
  Let $\calM_k$ denote the space of all complete, properly embedded
  minimal surfaces with finite total curvature in $\HH^2\times \RR$
  with $k$ ends, each asymptotic to an entire vertical plane. If
  $\Sigma \in \calM_k$ is horizontally nondegenerate, then the
  component of this moduli space containing $\Sigma$ is a real
  analytic space of dimension $2k$, and $\Sigma$ is a smooth point in
  this moduli space. In any case, even without this nondegeneracy
  assumption, $\calM_k$ is a real analytic space of virtual dimension
  $2k$.
\label{defthy}
\end{thm}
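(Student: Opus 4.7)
The strategy is standard deformation theory: realize $\calM_k$ locally as the zero set of a real analytic Fredholm map, then extract smoothness and dimension via the real analytic implicit function theorem, or Lyapunov--Schmidt reduction in the degenerate case. Fix $\Sigma \in \calM_k$. Each end $E_i$ is, outside a compact set, a normal graph of an exponentially decaying function over its asymptotic vertical plane $P_i = \gamma_i \times \RR$. Since $P_i$ is totally geodesic in $\HH^2 \times \RR$ and isometric to flat $\RR^2$, with $|A|^2 \equiv 0$ and $\ricci_{\HH^2\times\RR}(\nu, \nu) \equiv -1$ on $P_i$, the asymptotic model for the Jacobi operator $L_\Sigma = \Delta_\Sigma + |A|^2 + \ricci(\nu,\nu)$ is the massive flat Schrödinger operator
\[
L_0 = \Delta_{\RR^2} - 1.
\]
We work in weighted Hölder spaces $\calC^{2,\alpha}_\delta(\Sigma)$ of normal sections decaying like $e^{-\delta\rho}$ on the ends, where $\rho$ is a boundary defining function for infinity on each end and $0 < \delta < 1$.

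Because $L_\Sigma$ is an exponentially small perturbation of $L_0$ on each end, standard Lockhart--McOwen / $b$-calculus arguments give that $L_\Sigma \colon \calC^{2,\alpha}_\delta \to \calC^{0,\alpha}_\delta$ is Fredholm for such $\delta$, and formal self-adjointness combined with weight duality between $\pm\delta$ forces this Fredholm index to vanish. The geometric deformations of the asymptotic data live in an explicit $2k$-dimensional space $V = \bigoplus_i V_i$ of indicial Jacobi fields sitting just outside $\calC^{2,\alpha}_\delta$: on $P_i$ in geodesic-product coordinates $(s,t)$, $V_i$ is the $2$-dimensional span of $\cosh s$ and $\sinh s$ (both independent of $t$), corresponding to infinitesimal perpendicular translation and rotation of the asymptotic geodesic $\gamma_i$ in $\HH^2$. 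With smooth cutoffs $\chi_i$ supported on the $i$-th end, the augmented operator
\[
\widetilde L_\Sigma \colon \calC^{2,\alpha}_\delta(\Sigma) \oplus V \longrightarrow \calC^{0,\alpha}_\delta(\Sigma), \qquad (u,v) \mapsto L_\Sigma\Bigl(u + \sum_i \chi_i v_i\Bigr),
\]
is then Fredholm of index $\dim V = 2k$.

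Let $\calB$ denote the Banach manifold of nearby surfaces parameterized as normal graphs of $u + \sum_i \chi_i v_i$ over $\Sigma$, and let $H \colon \calB \to \calC^{0,\alpha}_\delta$ be the mean curvature map. This map is real analytic---the minimal surface equation is polynomial in the first and second derivatives of the graphing function---and its linearization at $\Sigma$ is precisely $\widetilde L_\Sigma$. If $\Sigma$ is horizontally nondegenerate, then $L_\Sigma$ has no decaying even Jacobi fields; restricting to the even sector under horizontal reflection (which contains $V$, since its elements are even), and using that the vertical-translation Jacobi field that would otherwise lie in the kernel is odd and hence excluded, we conclude that $\widetilde L_\Sigma$ is surjective with kernel of dimension $2k$. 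The real analytic implicit function theorem then presents the component of $\calM_k$ containing $\Sigma$ as a real analytic manifold of dimension $2k$, with $\Sigma$ a smooth point. In general, Lyapunov--Schmidt reduction expresses $\calM_k$ locally as the zero set of a real analytic map $\ker \widetilde L_\Sigma \to \coker \widetilde L_\Sigma$ between finite-dimensional vector spaces with $\dim\ker - \dim\coker = 2k$, yielding the claimed real analytic space structure of virtual dimension $2k$.

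The chief technical input is the Fredholm/index analysis of $L_\Sigma$ when the ends are asymptotically full Euclidean planes rather than cylinders or cones. One must identify the $2k$-dimensional enlargement $V$ of ``geometric'' indicial Jacobi fields and verify that it raises the Fredholm index by exactly $2k$. The subtlety is that the indicial behavior at infinity is governed by modified Bessel functions $I_n, K_n$ rather than by separable polynomial solutions over a compact cross-section, so the matching between the $2$-parameter family of geodesic deformations of $\gamma_i$ in $\HH^2$ and the borderline growth modes of $L_0$ must be established with care. Once this model analysis is in hand, the remaining ingredients---real analyticity of $H$, the implicit function theorem, and Lyapunov--Schmidt reduction---are all standard.
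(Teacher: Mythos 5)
Your overall skeleton (a $2k$-dimensional space of deformations of the asymptotic geodesics, the implicit function theorem in the horizontally nondegenerate case, Lyapunov--Schmidt in general, index zero for $L_\Sigma$ on exponentially weighted spaces with weight in $(-1,1)$) matches the paper, but the way you incorporate the $2k$ asymptotic parameters has a genuine gap. You augment the domain by cutoffs of the model Jacobi fields $\cosh s,\ \sinh s$ and claim $\widetilde L_\Sigma(u,v)=L_\Sigma\bigl(u+\sum_i\chi_i v_i\bigr)$ lands in $\calC^{0,\alpha}_\delta$ with $\delta>0$. It does not: on each end $L_\Sigma=(\Delta_{\RR^2}-1)+F$ where the coefficients of $F$ decay only like $r^{-1/2}e^{-r}$ (this is exactly the decay rate of the graph function in Proposition 2.3), while $\cosh s,\sinh s\sim e^{r|\cos\theta|}$; hence $F(\chi_i v_i)$ is of size $r^{-1/2}$ along the directions $\theta=0,\pi$, i.e.\ only polynomially decaying. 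So the augmented operator is not even well defined into the stated target, and the claim that it is Fredholm of index $2k$ (and, in the nondegenerate case, surjective with $2k$-dimensional kernel) is not established. The same borderline cancellation, $e^{+r}$ growth of the deficiency fields against $e^{-r}$ decay of the perturbation, also undermines the nonlinear step: the mean curvature map is not controlled (nor analytic into $\calC^{0,\alpha}_\delta$) on normal graphs of $u+\sum_i\chi_i v_i$ with exponentially growing $v_i$, and in fact a surface asymptotic to a slightly rotated plane is a normal graph over $\Sigma$ whose graph function grows only \emph{linearly} in $s$ (the exponential rate is a purely linearized artifact), so the nearby elements of $\calM_k$ do not lie in your proposed chart at all. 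This is precisely the ``care'' you flag in your last paragraph but do not supply, and as set up the step fails.

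The paper avoids the deficiency-space formulation altogether. For $\e\in\RR^{2k}$ it moves the two ideal endpoints of each asymptotic geodesic $\gamma_\alpha$, transplants the original graph function $v_\alpha$ to the rotated plane $P_\alpha(\e)$, and interpolates with a fixed cutoff over a compact annulus to produce a surface $\Sigma(\e)$ agreeing with $\Sigma$ on a large compact set and exactly minimal outside the annuli; thus $H(\e)$ is small and \emph{compactly supported}, hence lies in every weighted space. The implicit function theorem (or Lyapunov--Schmidt when $\Sigma$ is degenerate) is then applied to $(\e,u)\mapsto H$ of the normal graph of $u$ over $\Sigma(\e)$, where $u$ ranges only over \emph{decaying} even functions and the linearization in $u$ is $L_\Sigma$ on $e^{\kappa R}\calC^{2,\mu}_{\even}$, invertible exactly by horizontal nondegeneracy. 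The $2k$ dimensions enter as the finite-dimensional parameter $\e$, not as growing elements of the function space, which is what makes the argument close. If you want to keep a linear deficiency-space picture, you would have to work at a growing weight (or with the actual geometric Jacobi fields, whose global existence on an arbitrary $\Sigma\in\calM_k$ is part of what is being proven) and redo the nonlinear estimates there; the paper's geometric parametrization is the cleaner route.
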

We make two remarks on this. First, this dimension count coincides
with the dimension of the family constructed by our gluing methods,
and also with the dimension of the family of genus $0$ surfaces
constructed in~\cite{moro1}. The fact that the dimension does not
depend on the genus may be surprising at first, but this is also the
case for the space of complete Alexandrov-embedded minimal or CMC
surfaces of finite topology in $\RR^3$, see \cite{KMP} and \cite{PR}.
As is the case in these other theories, it turns out to be very hard
to construct surfaces which are actually degenerate, and we leave this
as an interesting open problem here as well.  The second remark is
that it would also be quite interesting to know whether the vertical
symmetry condition we are imposing is anything more than a technical
convenience.  More specifically, we ask whether there exist finite
total curvature minimal surfaces in $\HH^2 \times \RR$ with vertical
planar ends which do not have a horizontal plane of symmetry.

Our results show that the existence theory for these properly embedded
minimal surfaces of finite total curvature in $\HH^2 \times \RR$ is in
some ways opposite to that in $\RR^3$. Indeed, Meeks, Perez and Ros
\cite{m-p-r} have proved that there is an upper bound, depending only
on the genus, for the number of ends of a properly embedded minimal
surface of finite topology in $\RR^3$. This is a significant step
toward resolving the conjecture of Hoffman and Meeks that a connected
minimal surface of finite topology, genus $g$ and $k>2$ ends can be
properly minimally embedded in $\RR^3$ if and only if $k \leq g + 2$.
By contrast, our result gives some indication that a connected surface
of finite topology and finite total curvature can be properly
minimally embedded in $\HH^2 \times \RR$ only if the number of ends
$k$ has a specific lower bound in terms of the genus $g$. Going out on
a limb, we conjecture that the correct bound for the surfaces
constructed by gluing horizontal catenoids with small necks is $k \geq
2g+1$.  Note also that our construction shows that if there exists a
surface of this type of genus $g$ and $k$ ends, then we can construct
such surfaces with genus $g$ and any larger number of ends, so there
definitely is no upper bound as in the Euclidean space to the number
of ends that a surface of fixed genus may have.

The plan of this paper is as follows: In \S 2 we describe the
horizontal catenoids in more detail, reviewing known properties and
developing some new ones as well. This is where we prove
Theorem~\ref{nondeghc}.  Next in \S 3 we describe the configurations
of approximate minimal surfaces formed by patching together horizontal
catenoids.  The actual gluing, i.e.\ the perturbation of these
approximately minimal surfaces to actual minimal surfaces, which is
possible when some parameter in the construction is sufficiently
large, is carried out in \S 4.  The analytic steps involve a
parametrix construction which is perhaps not so well known in the
minimal surface literature but fairly standard elsewhere; we refer to
the recent paper \cite{MS} which uses a similar method to construct
multi-layer solutions of the Allen-Cahn equation in $\HH^n$.  In \S 5
the general construction is given for gluing together any two
horizontally nondegenerate properly embedded minimal surfaces of
finite total curvature; this is a simple variant of the proof of the
main result. Finally, in \S 6, we study the deformation theory.

The second author is very grateful to the Department of Geometry and
Topology in the University of Granada, where this work was
initiated. R.M. is supported by the NSF grant DMS-1105050. F.M. and
M.M.R. are partially supported by MEC-FEDER Grant no. MTM2011-22547
and a Regional J. Andaluc\'\i a Grant no. P09-FQM-5088.

\section{Horizontal catenoids}
We now describe the fundamental building blocks in our gluing
construction, which are the horizontal catenoids $K_\eta$ in $\HH^2
\times \RR$, originally constructed by Morabito and the third
author~\cite{moro1} and by Pyo~\cite{pyo1}. Each $K_\eta$ has genus
zero and two ends asymptotic to vertical geodesic planes. The
parameter $\eta$ is the hyperbolic distance between these two planes;
it varies in an open interval $(0,\eta_0)$, where the upper bound
$\eta_0$ corresponds to the distance between two opposite sides of an
ideal regular quadrilateral. These catenoids have total curvature
$-4\pi$, and have ``axes'' orthogonal to the $\RR$ direction, whence
the moniker horizontal.

\medskip
\noindent{\bf The horizontal catenoid as a vertical bigraph:}
The initial construction of $K_\eta$ in the papers above describes it
as a bigraph over a region $\Omega_\eta \subset \HH^2$ with a
reflection symmetry across the central $\HH^2 \times \{0\}$. This
means the following: first, there is a nonnegative function $u$
defined in $\Omega_\eta$ such that
\[
K_\eta = \{ (z, u(z)): z \in \Omega_\eta\} \cup \{(z,-u(z)): z \in
\Omega_\eta\}.
\]
The domain $\Omega_\eta$ is bounded by four smooth curves of infinite
length which intersect only at infinity; two of these are
hyperparallel geodesics, denoted $\gamma_{-1}$ and $\gamma_1$, and the
parameter $\eta$ equals the hyperbolic distance between them; the
other two curves, denoted $C_{-1}$ and $C_1$, connect the adjacent
pairs of endpoints of $\gamma_{\pm 1}$. The function $u$ is strictly
positive in the interior of $K_\eta$, vanishes and has infinite
gradient on $C_{-1} \cup C_1$, and tends to $+\infty$ along
$\gamma_{-1} \cup \gamma_1$.  We also let $C_{-1}'$ and $C_1'$ be the
geodesic lines with the same endpoints as $C_{-1}$ and $C_1$,
respectively, and $\Omega_{\eta}'$ the ideal geodesic quadrilateral
bounded by $\gamma_{-1} \cup \gamma_1 \cup C_{-1}' \cup C_1'$.  Using
vertical planes (which are minimal) as barriers, we see that $C_{-1}$
and $C_1$ are strictly concave with respect to $\Omega_\eta$. In
particular, they lie in the interior of $\Omega_\eta'$.  For later
reference, we identify a few other curves which will enter the
discussion. First, let $\Gamma$ denote the unique geodesic which is
orthogonal to both $\gamma_{\pm 1}$; next, let $\gamma_0$ be the
geodesic perpendicular to $\Gamma$ and midway between $\gamma_{\pm
  1}$; finally, denote by $\tilde{\gamma}_{\pm 1}$ the two geodesics
which connect the opposite ideal vertices of $\Omega_\eta$.  Observe
that $\gamma_0$ is perpendicular to both $C_{\pm 1}'$; in addition the
points of intersection $\gamma_0 \cap \Gamma$ and $\tilde{\gamma}_{-1}
\cap \tilde{\gamma}_1$ are the same, and we denote this centerpoint by
$Q$.

We finish this discussion by noting that the horizontal catenoid with ends asymptotic 
to the two vertical planes $\gamma_1\times\R$ and $\gamma_{-1}\times\R$ is unique
(when it exists). This follows from the fact that this surface is a bigraph across the plane $t=0$
as well as in the two horizontal directions associated to the geodesics $\Gamma$ and 
$\gamma_0$ (see Proposition~\ref{horbigraph}).
\begin{figure}[htbp]
    \begin{center}
        \includegraphics[width=.66\textwidth]{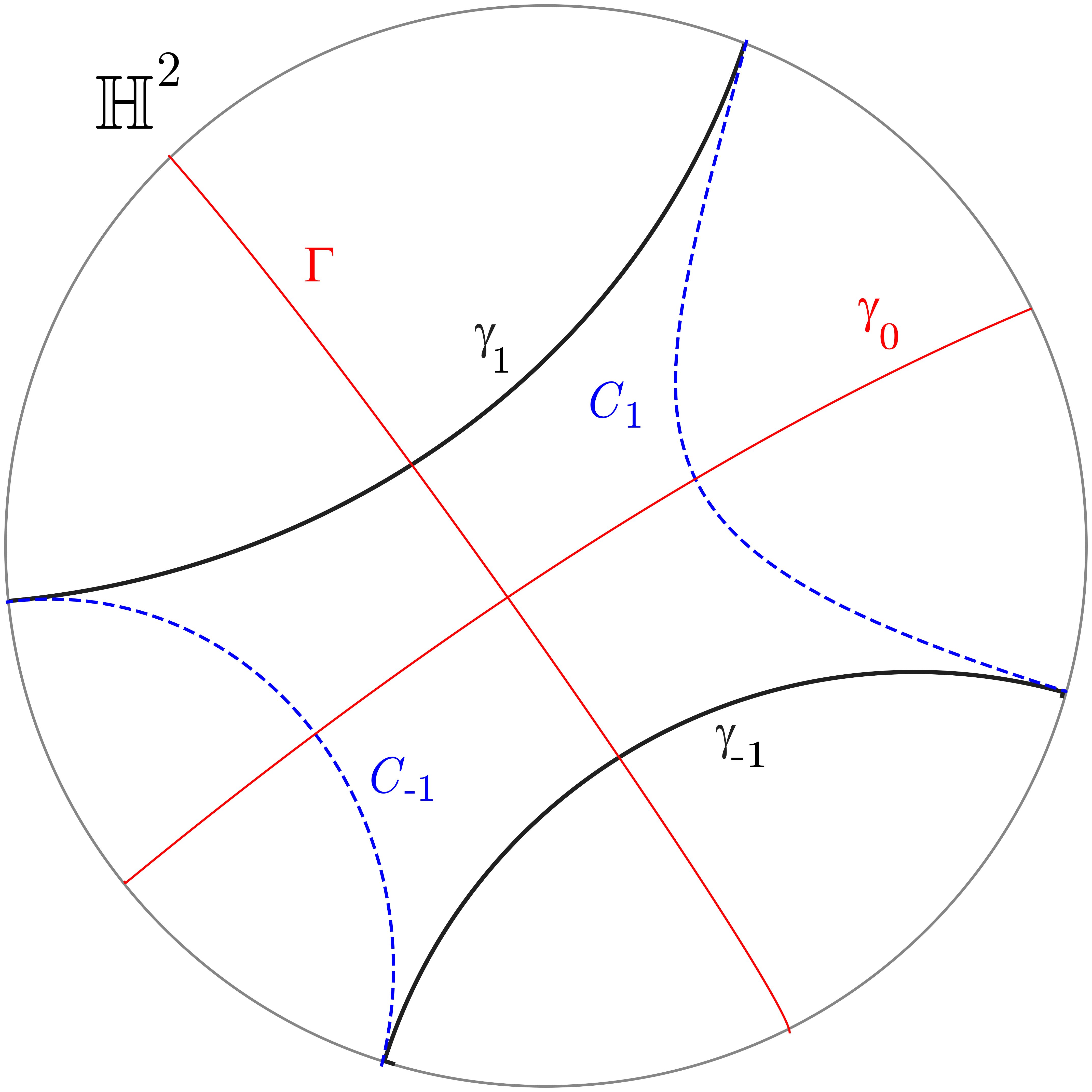}
     \end{center}
   \caption{The boundary of the region $\Omega_\eta.$} \label{fig:one}
\end{figure}
\medskip

\noindent{\bf The extremal surface:}
The family of catenoids $K_\eta$ exists only for $0 < \eta <
\eta_0$. This critical value $\eta_0$ corresponds to the case where
the pairs of geodesics $\tilde{\gamma}_{\pm 1}$ intersect orthogonally
at $Q$. The limiting domain $\Omega_{\eta_0}$ is the same as
$\Omega_{\eta_0}'$ (so $C_{-1} = C_{-1}'$ and $C_1 = C_1'$ in this
limit).  Furthermore, as $\eta \nearrow \eta_0$, the value $u(Q)$
tends to $+\infty$.  In fact, recentering $K_\eta$ by translating down
by $-u(Q)$, there is a limiting surface which is a graph over
$\Omega_{\eta_0}'$ taking the boundary values $\pm\infty$ on
alternate sides. It is planar of genus zero with one end.  This surface is
qualitatively similar to the classical Scherk surface of $\R^3$, and
so we also call it the Scherk surface. As already mentioned in the introduction,
this example was constructed in \cite{cor2, ner2}.

\medskip

\noindent{\bf Further symmetries:} Unlike the Euclidean case, or even
the case of vertical catenoids in $\HH^2 \times \RR$, the horizontal
catenoid $K_\eta$ has only a discrete isometry group, isomorphic to
$\ZZ_2 \times \ZZ_2 \times \ZZ_2$. Each $\ZZ_2$ corresponds to a
reflection: the first reflection, which we call ${\cal R}_t$, sends
$(z,t)$ to $(z,-t)$, and thus interchanges the top and bottom halves
of $K_\eta$; the second, ${\cal R}_s$, is the reflection across $\Gamma
\times\R$, it interchanges the `left' and `right' sides of each
asymptotic end; the final one, ${\cal R}_o$, is the reflection across
$\gamma_0\times\R$ and interchanges the two ends of $K_\eta$ and has
fixed point set a loop around the neck.
\begin{figure}[htbp]
    \begin{center}
        \includegraphics[height=.55 \textheight]{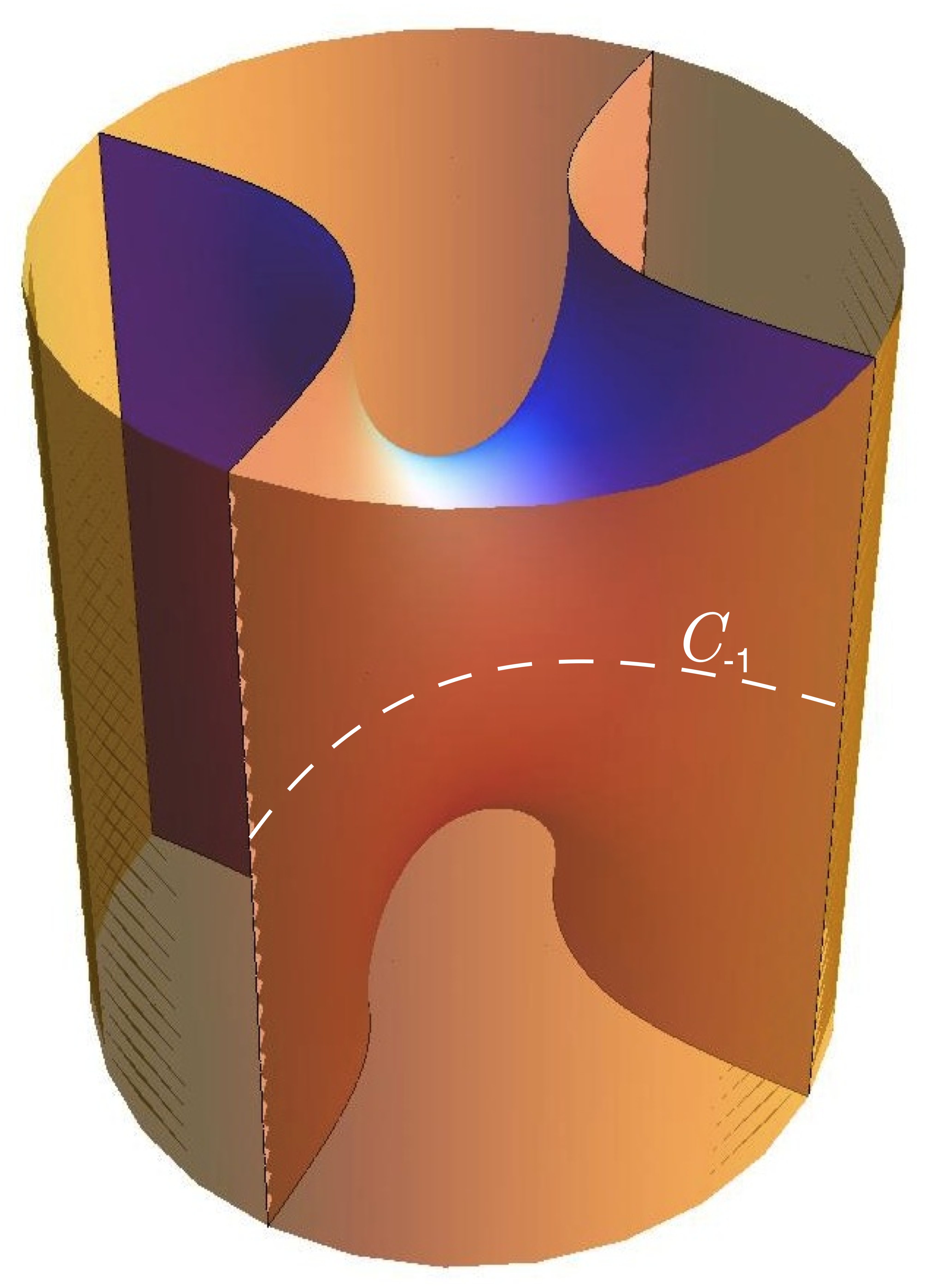}
     \end{center}
   \caption{A horizontal catenoid  $K_\eta.$} \label{fig:intro}
\end{figure}
\medskip

\noindent{\bf Graphical representation of the ends of $K_\eta$:} 
Each end of $K_\eta$ is asymptotic to one of the totally geodesic
vertical planes $P_j = \gamma_j \times \RR$, $j=\pm 1$.  The
intermediate vertical plane $\gamma_0\times\R$ fixed by ${\cal R}_o$ bisects
$K_\eta$, decomposing it into two pieces, $K_\eta^{1} \cup
K_\eta^{-1}$, which are interchanged by this reflection.  Each
$K_\eta^j$ is a smooth manifold with compact boundary and one end,
which is asymptotic to the vertical plane $P_j$.  Outside of some
compact set, $K_\eta^j$ is a normal graph over $P_j$, with graph
function $v^j$ which is strictly positive and defined on an exterior
region $E_\eta^j = P_j \setminus \calO_\eta$.

The two ends are equivalent, so let us fix one and drop the sub- and
superscripts $j$ for the time being.  Use parameters $(s,t)$ on $P$,
where $t$ is the vertical coordinate and $s$ is the signed distance
function along the geodesic $\gamma$, as measured from $\gamma \cap
\Gamma$. The restrictions of ${\cal R}_s$ and ${\cal R}_t$ to the
plane $P$ correspond to $(s,t) \mapsto (-s,t)$ and $(s,t) \mapsto
(s,-t)$, respectively.  We assume that the domain $E_{\eta}$ is
invariant under both these reflections.

The parameter $\eta$, strictly speaking, measures the distance between
the asymptotic vertical planes of $K_\eta$, but also measures the size
of the neck of $K_\eta$, which we take, for example, as the length of
the closed curve $K_\eta\cap(\gamma_0\times\R)$. This function, which
we denote by $n(\eta)$, has $n(\eta) \to 0$ as $\eta \to 0$ and
$n(\eta) \to \infty$ as $\eta \to \eta_0$. This can be thought as the
original parameter for this family used in~\cite{moro1, pyo1}.

We now describe the asymptotic decay profile of the graphical
representation of $K_\eta$ over $P$.  Introduce polar coordinates
$(r,\theta)$ in the $(s,t)$ plane, so $s = r\cos\theta$ and $t = r
\sin \theta$, where the coordinates $(s,t)$ have been defined above.
\begin{prop}
  For each $\eta \in (0, \eta_0)$, as $r \to \infty$, the graph
  function $v$ has the asymptotic expansion
\begin{equation}
  v(r,\theta) = A_\eta(\theta) r^{-\frac12} e^{-r} + \calO(r^{-\frac32} e^{-r}),
  \label{decayv}
\end{equation}
where $A_\eta(\theta)$ is some strictly positive smooth function on
$\mathbb{S}^1$.
\label{decayvprop}
\end{prop}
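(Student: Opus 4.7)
The plan is to reduce the minimal surface equation on the end to a perturbation of a linear problem on the totally geodesic plane $P=\gamma\times\R$. Since $P$ is flat (isometric to $\R^2$) and totally geodesic, its Jacobi operator in $\HH^2\times\R$ simplifies to
\[
L = \Delta_P + |A_P|^2 + \Ric(\nu,\nu) = \partial_s^2 + \partial_t^2 - 1,
\]
using $|A_P|^2=0$ and $\Ric(\nu,\nu)=-1$ for any unit horizontal normal $\nu$. In the $(r,\theta)$ coordinates of the statement, $L = \partial_r^2 + r^{-1}\partial_r + r^{-2}\partial_\theta^2 - 1$. Writing the minimal surface equation for the normal graph of $v$ over $P$, one obtains $Lv = Q(v,\nabla v,\nabla^2 v)$, where $Q$ is a smooth function of its arguments vanishing quadratically at $(0,0,0)$.

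The first step is to obtain preliminary exponential decay of $v$ and its derivatives. Since $K_\eta$ has finite total curvature and its end is asymptotic to $P$, standard convergence results together with interior Schauder estimates give smooth decay $v\to 0$; combining this with the fact that $L$ is the modified Helmholtz operator (whose Green function decays like $e^{-r}$) and bootstrapping yields $|v|+|\nabla v|+|\nabla^2 v|\leq Ce^{-(1-\delta)r}$ for every small $\delta>0$. Consequently $Q = \calO(e^{-(2-2\delta)r})$, much smaller than the $e^{-r}$ scale of the bounded homogeneous solutions of $L$.

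The main step is separation of variables. Writing $v(r,\theta)=\sum_{k\in\Z} v_k(r)e^{ik\theta}$, each mode solves
\[
v_k''+\frac{1}{r} v_k' - \Bigl(1+\frac{k^2}{r^2}\Bigr) v_k = Q_k(r),\qquad Q_k(r) = \calO(e^{-(2-2\delta)r}).
\]
The homogeneous solutions are the modified Bessel functions $I_{|k|}(r)\sim e^r/\sqrt{2\pi r}$ and $K_{|k|}(r)\sim\sqrt{\pi/(2r)}\,e^{-r}\bigl(1+\calO(r^{-1})\bigr)$. The exponential decay of $v_k$ forces the coefficient of $I_{|k|}$ to vanish, and variation of parameters produces a particular solution of order $e^{-(2-2\delta)r}$; hence
\[
v_k(r) = c_k\sqrt{\pi/2}\,r^{-1/2}e^{-r}\bigl(1+\calO(r^{-1})\bigr) + \calO(e^{-(2-2\delta)r})
\]
for constants $c_k$. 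The series $A_\eta(\theta):=\sqrt{\pi/2}\sum_k c_k e^{ik\theta}$ converges in $\CI(\bS^1)$ (the $c_k$ decay rapidly by the a priori smoothness and exponential decay of $v$), yielding the asymptotic expansion.

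Finally, the strict positivity of $A_\eta$ follows from the Harnack inequality. Since $K_\eta$ lies on one side of $P$ in the graphical region, $v>0$, so $A_\eta\geq 0$. The minimal surface equation is uniformly elliptic along $v$ (since $v$ and $\nabla v$ are bounded), so Harnack on dyadic annuli $R\leq r\leq 2R$ gives $\sup_{r\sim R}v\leq C\inf_{r\sim R}v$ with $C$ independent of $R$; letting $R\to\infty$ and using the expansion yields $\sup_\theta A_\eta \leq C\inf_\theta A_\eta$, so $A_\eta$ is either identically zero or strictly positive. If $A_\eta\equiv 0$, iterating the Fourier argument shows $v = \calO(e^{-Nr})$ for every $N$, contradicting unique continuation at infinity for the modified Helmholtz-type operator $L$ applied to the nontrivial solution $v$ (since $K_\eta\neq P$). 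The main technical obstacle will be clean control of the nonlinearity uniformly across Fourier modes and verification that the aggregated remainder is genuinely $\calO(r^{-3/2}e^{-r})$ rather than a worse bound coming from $k$-dependent constants in the Bessel asymptotics.
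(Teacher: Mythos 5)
Your overall strategy runs parallel to the paper's: both reduce to the model operator $L_P=\Delta_{\RR^2}-1$ plus a quadratic error $Q(v)$, both first upgrade the qualitative decay $v\to 0$ to decay $e^{-(1-\delta)r}$, and both then extract the leading term $A(\theta)r^{-1/2}e^{-r}$. Where you genuinely diverge is in the extraction step: you separate variables and analyze each Fourier mode via the modified Bessel equation ($I_{|k|},K_{|k|}$ plus variation of parameters) and then resum, whereas the paper writes $v=G_P(-Q(v))$ and reads off the expansion from the convolution with the explicit kernel $K_0(|z-z'|)$, citing scattering theory for the map $f\mapsto A(\theta)$. Your route can be made to work, and you correctly flag its cost (the $O(r^{-1})$ corrections in the $K_{|k|}$ asymptotics carry $k^2$-size constants, so the resummation needs the rapid decay of the $c_k$ and $Q_k$); the paper's kernel argument avoids mode-by-mode uniformity issues altogether. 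Note also that the step you dispatch in one sentence --- passing from $v\to 0$ with no rate to $|v|\leq Ce^{-(1-\delta)r}$ --- is where the paper spends most of its proof: it needs a two-stage maximum-principle argument with barriers $w=ae^{-r}+b$ (to start any exponential rate at all) and then $w=e^{-\beta r}$ iterated to push $\beta\to 1$; ``the Green function decays like $e^{-r}$, so bootstrap'' does not by itself launch this iteration, since with no initial rate $Q(v)$ is only $o(|v|)$.

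The concrete flaw is in your positivity argument. The Harnack inequality you invoke, $\sup_{r\sim R}v\leq C\inf_{r\sim R}v$ with $C$ independent of $R$, is false for operators with a zeroth-order term like $\Delta-1$ on annuli whose diameter grows: chaining Harnack around the circle $r=R$ requires $\sim R$ unit balls and yields a constant of size $e^{cR}$, and this is sharp --- $u=e^{r\cos\theta}$ is a positive solution of $\Delta u-u=0$ whose oscillation on $\{r=R\}$ is $e^{2R}$. So the dichotomy ``$A\equiv 0$ or $A>0$'' does not follow as written, and the subsequent unique-continuation-at-infinity step is then moot. A correct route, using what you have already established, is a lower barrier: on $r\geq R_1$ compare $v$ with $w=c\,K_0(r)-C e^{-3r/2}$; since $|Q(v)|\leq Ce^{-2(1-\e)r}$ with $\e<1/4$, one gets $L_P(v-w)\leq 0$ for $R_1$ large, $v-w\geq 0$ on $r=R_1$ (as $v>0$ there), and $v-w\to 0$ at infinity, whence $v\geq \frac{c}{2}K_0(r)$ and so $A(\theta)>0$ directly. (Alternatively one can argue from the Green/boundary-integral representation, which is closer in spirit to the paper's treatment.)
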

This decay profile is essentially a linear phenomenon and corresponds
to the known asymptotic properties of homogeneous solutions of the
Jacobi operator on $P$.  Recall that for any minimal surface $\Sigma$,
its Jacobi operator (for the minimal surface equation) is the elliptic
operator
\begin{equation}
  L_{\Sigma} :=  \Delta_{\Sigma} + |A_{\Sigma}|^2 + \ricci\,(N,N);
  \label{JacobiopSigma}
\end{equation}
here $\Delta_{\Sigma}$ is the Laplacian on $\Sigma$, $A_\Sigma$ the
second fundamental form of the surface, $N$ its unit normal, and
$\ricci$ the Ricci tensor of the ambient space.  When $\Sigma = P$ is
a vertical plane, this simplifies substantially. Indeed, $A_P \equiv
0$ and $N$ has no vertical component, so that $\ricci(N, N) \equiv
-1$, hence
\begin{equation}
  L_P = \Delta_{\RR^2} - 1.
  \label{JacobiopP}
\end{equation}

We now deduce Proposition~\ref{decayvprop} from a slightly more
general result.
\begin{prop}
  Let $E \subset P$ be an unbounded region with complement $P
  \setminus E$ smoothly bounded and compact. Let $K \subset \HH^2
  \times \RR$ be a minimal surface which is a normal graph over $E$
  with compact boundary over $\del E$, and denote by $v: E \to \RR$
  the graph function. Suppose that $v \to 0$ at infinity in $P$. Then
  there exists $A \in \calC^\infty(\mathbb{S}^1)$, such that
  \begin{equation}
    v(r,\theta) = A(\theta) r^{-\frac12}e^{-r} + \calO(r^{-\frac32} e^{-r}).
    \label{genasym}
  \end{equation}
  Furthermore, if $K$ lies on one side of $P$ at infinity, then $A$ is
  either strictly positive or strictly negative.
\label{genasymprop}
\end{prop}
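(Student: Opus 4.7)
The plan is to derive and solve the graph equation as a perturbation of the modified Helmholtz equation. Because $P$ is totally geodesic, the linearization at $v = 0$ of the minimal surface operator for normal graphs over $P$ is exactly the Jacobi operator $L_P = \Delta - 1$ given in \eqref{JacobiopP}; consequently $v$ satisfies
\begin{equation*}
(\Delta - 1) v = \calQ(v, \nabla v, \nabla^2 v),
\end{equation*}
with $\calQ$ smooth and vanishing to second order at the origin. The decaying solutions of $(\Delta - 1) u = 0$ on the exterior of a disk are the Macdonald functions $K_n(r) e^{in\theta}$, which all share the leading asymptotic $\sqrt{\pi/(2r)}\,e^{-r}$; this is what forces the asymptotic profile in \eqref{genasym}.

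First I would prove the exponential decay $v = O(r^{-1/2} e^{-r})$ by a barrier argument: interior elliptic estimates bound $v$ in $C^k$ by its sup norm, so $\calQ$ contributes as a small zeroth-order perturbation of $\Delta - 1$ on $\{r \geq R_0\}$ once $R_0$ is large, and the maximum principle applied with $CK_0(r)$ as supersolution on annular exhaustions gives the bound. Next, expand $v(r,\theta) = \sum_{n \in \Z} a_n(r) e^{in\theta}$; each coefficient satisfies an inhomogeneous modified Bessel equation $a_n'' + r^{-1}a_n' - (1 + n^2/r^2) a_n = F_n$, where $F_n$ is the $n$-th Fourier coefficient of $\calQ$, with $|F_n(r)| \leq C_n r^{-1} e^{-2r}$. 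Variation of parameters with the basis $\{I_n, K_n\}$ gives $a_n(r) = c_n K_n(r) + \tilde a_n(r)$ where $\tilde a_n = O(r^\alpha e^{-2r})$; inserting the standard asymptotic $K_n(r) = \sqrt{\pi/(2r)}\,e^{-r}(1 + \tfrac{4n^2-1}{8r} + O(r^{-2}))$ and summing over $n$ produces \eqref{genasym} with $A(\theta) = \sqrt{\pi/2}\sum_n c_n e^{in\theta}$. Smoothness of $A$ on $\mathbb{S}^1$ follows from the rapid decay of the $c_n$, itself a consequence of elliptic regularity of $v$ on annular neighborhoods of infinity.

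For the dichotomy, if $K$ lies on one side of $P$ near infinity, without loss of generality $v \geq 0$, and the strong maximum principle applied to $(\Delta - 1 + c(x))v = 0$ with $c = \calQ/v$ bounded (by the preceding estimates) gives $v > 0$ throughout the interior. To promote this to $A(\theta) > 0$, I would carry the expansion one more order: the coefficient of $r^{-3/2} e^{-r}$ is $B(\theta) = -\tfrac{1}{2}A''(\theta) - \tfrac{1}{8} A(\theta)$, since the nonlinear contribution enters only at the strictly lower order $e^{-2r}$. At a zero $\theta_0$ of $A$, the point $\theta_0$ is a minimum, so $A''(\theta_0) \geq 0$ and hence $B(\theta_0) \leq 0$; but $v(r,\theta_0) > 0$ for all large $r$ forces $B(\theta_0) \geq 0$, so $A''(\theta_0) = 0$, and iterating at successively higher orders (equivalently, applying a Hopf-type boundary point lemma in the scattering compactification $\rho = e^{-r}$ to the smooth extension of $r^{1/2}e^r v$ whose boundary value is $A$) produces the strict inequality.

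The main technical obstacle is controlling the variation-of-parameters integrals uniformly across Fourier modes, since $I_n$ and $K_n$ grow rapidly in $n$ on bounded intervals; this is handled by weighted estimates exploiting the rapid decay in $n$ of the Fourier coefficients of $v$ coming from smoothness. A secondary subtlety is confirming that the nonlinearity enters strictly below the first two Bessel orders, so that both $A$ and the sub-leading $B$ are governed entirely by the linearized theory, which is what makes the positivity argument go through.
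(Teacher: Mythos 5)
Your overall architecture follows the paper's for the first half but genuinely diverges in the second. The paper also starts from the maximum principle: it first proves only that $|v|\leq Ce^{-(1-\e)r}$ for some small $\e$, and then extracts the precise profile by writing $v=G_P(-Q(v))$ as in \eqref{Green1}, using the explicit convolution kernel \eqref{Green2}--\eqref{Green3} and quoting \cite{Melrose} for the map from the source term to the leading coefficient $A(\theta)$. You instead separate variables and run variation of parameters in each Fourier mode with the basis $\{I_n,K_n\}$. Your route is more elementary and self-contained, and your two-term refinement with subleading coefficient $B=-\tfrac12 A''-\tfrac18 A$ is correct (it follows from the $K_n$ expansion) and goes beyond what the paper records; the price is the need to control the variation-of-parameters integrals uniformly in $n$, which you rightly flag. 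Note also that your mode analysis does not actually need the sharp input $|F_n|\lesssim r^{-1}e^{-2r}$: a source decaying like $e^{-2(1-\e)r}$, with $2(1-\e)>1$, already yields $a_n=c_nK_n+\calO(e^{-2(1-\e)r})$, which is all you use.

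Two steps as stated need repair. First, you cannot get the borderline rate $v=\calO(r^{-1/2}e^{-r})$ in a single barrier step with $CK_0$: when $Q(v)$ is only known to be a \emph{small} (not decaying) zeroth-order perturbation, the operator is $\Delta-1+c$ with $|c|\leq\e$, and $K_0$, which solves $(\Delta-1)K_0=0$ exactly, is not a supersolution of the perturbed operator; decaying solutions of $\Delta-1+c$ may a priori decay only like $e^{-\sqrt{1-\e}\,r}$. You must bootstrap as the paper does (first some rate $e^{-mr}$, then $e^{-(1-\e)r}$ once $Q(v)$ is known to decay quadratically); as noted above this weaker rate is harmless for the rest of your argument. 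Second, the strict-sign endgame is incomplete. From $A\geq0$, $A(\theta_0)=0$ you correctly get $A''(\theta_0)=0$ and $B(\theta_0)=0$, but iterating to higher orders only shows $A$ vanishes to infinite order at $\theta_0$, which is no contradiction since $A$ is merely smooth; and the proposed Hopf lemma in the compactified picture is not immediate, because in the variable $x=1/r$ the operator satisfied by $r^{1/2}e^{r}v$ degenerates at the boundary face (it becomes first order in $x$, parabolic-type, at $x=0$), so the classical elliptic Hopf boundary point lemma does not apply as stated. A cleaner finish: since $v>0$, we have $v\geq\delta>0$ on a large circle $\{r=R\}$, and $u_-=\e\, r^{-1/2}e^{-r}$ is a subsolution of the perturbed equation for $r\geq R$ large, because $(\Delta-1)(r^{-1/2}e^{-r})=\tfrac14 r^{-5/2}e^{-r}>0$ while $|Q(v)|\lesssim e^{-2(1-\e)r}$ is of lower order; the maximum principle then gives $v\geq \e\,r^{-1/2}e^{-r}$, hence $A\geq\e>0$. (The paper's own treatment of strictness is a one-line assertion, so this point merits the extra care in any case.)
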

\begin{proof}
  The minimal surface equation for a horizontal graph over $P$ is a
  quasilinear elliptic equation $\calN(v,\nabla v,\nabla^2 v)=0$, the
  linearization of which at $v=0$ is just $L_P$. Let $p_j$ be any
  sequence of points in $P$ tending to infinity, and consider the
  restriction of $v$ to the unit ball $B_1(p_j)$ around
  $p_j$. Recenter this ball at the origin and write the translated
  function as $v_j$.  We are assuming that $v_j \to 0$, and it follows
  from standard regularity theory for the minimal surface equation
  that
  \begin{equation}
    ||v_j||_{2,\mu; B_1(0)} \to 0\quad \Mas\quad j \to \infty,
    \label{locest}
  \end{equation}
  where $||\cdot ||_{2,\mu; B_1(0)}$ denotes the norm on the Holder
  space ${\cal C}^{2,\mu}$ on the unit ball $B_1(0)$ (see~\cite{gt}).
  This means that we can write
  \begin{equation}
    \calN(v, \nabla v, \nabla^2 v) = L_P v + Q(v),
    \label{Taylor}
  \end{equation}
  where $Q$ is quadratic in $v$, $\nabla v$ and $\nabla^2 v$, and has
  the property that if $||v||_{2,\mu}$ is small, then
  \begin{equation}
    ||Q(v)||_{0,\mu} \leq C ||v||_{2,\mu}^2.
    \label{quadrem}
  \end{equation}

Now, applying the inverse $G_P = (\Delta_{\RR^2} - 1)^{-1}$ of the Jacobi operator 
(this $G_P$ is also called the Green operator or Green function) to \eqref{Taylor}
shows that $\calN(v,\nabla v, \nabla^2 v) = 0$ is equivalent to the equation
  \begin{equation}
    v = G_P ( -Q(v)).
    \label{Green1}
  \end{equation}

  We assume initially only that $v \to 0$ at infinity in $P$, but
  without any particular rate.  We first show that $v$ decays at some
  exponential rate; this is done using the maximum principle. The
  second and final step is to obtain the asymptotic formula
  \eqref{genasym}.

  To begin, using \eqref{locest} and \eqref{quadrem}, the following is
  true: There exists a constant $C_1 > 0$ such that, given any
  $\delta_0 > 0$ sufficiently small, there exists $R_0 \geq 1$ so that
  if $\delta < \delta_0$, $R > R_0$ and $|v| < \delta$ for all $r \geq
  R$ then $\sup |Q(v)| \leq C_1 \delta^2$.

  Now, define $w = a e^{-r} + b$. This satisfies $L_P w = -a r^{-1}
  e^{-r} - b$. Suppose that $\delta<\delta_0,1$ and $R>R_0$ are such
  that $\sup_{r\geq R}|v| =\delta$ is attained at $r = R$, and choose
  the coefficients $a$ and $b$ so that $a e^{-R} + b \geq \delta$ and
  $b \geq C_1 \delta^2$; to be specific, we take $b = C_1 \delta^2$
  and $a = \delta( 1 - C_1 \delta) e^R$. Then $v - w \leq 0$ when $r =
  R$, and furthermore (taking $\delta\leq 1/C_1$),
  \[
  L_P( v - w) = - Q(v) + ar^{-1}e^{-r} + b \geq -Q(v) + C_1 \delta^2
  \geq 0,
  \]
  where we drop the middle term since $a r^{-1}e^{-r} > 0$. Thus $v-w$
  is a subsolution of the equation which is non-positive at $r = R$
  and is bounded as $r \geq R$, hence $v - w \leq 0$ for all $r \geq
  R$. This implies that
  \[
  v(R+1, \theta) \leq w( R+1) = \delta (1 - C_1 \delta) e^{R} e^{-R-1}
  + C_1 \delta^2 = \delta \left( ( 1 - C_1\delta) e^{-1} + C_1\delta
  \right).
  \]
  Since $C_1$ is independent of $\delta$, we can choose $\delta$ so
  small that $(1 - C_1 \delta) e^{-1} + C_1 \delta < \frac12$, and
  hence $v(R+1,\theta) \leq \frac12 \delta$.  In other words, we see
  that
  \[
  \sup_{r = R+1} |v| \leq \frac12 \sup_{r = R} |v|,
  \]
  for all $R \geq R_0$, or equivalently $|v(r,\theta)|\leq C e^{-m r}$
  for some $m > 0$. This completes the first step.

  Now, by local a priori estimates, if $\mathcal{A}(\rho)$ is the
  annulus $\{\rho \leq r \leq \rho+1\}$, then
  $||v||_{2,\mu;\mathcal{A}(\rho)} \leq C e^{-m \rho}$, and hence
  $|Q(v)| \leq C_2 e^{-2m r}$ for all $r \geq R_0$.  Assuming that $m
  < 1$, we use the maximum principle again, this time with $w =
  e^{-\beta r}$ for some $\beta \in (m, \min\{1, 2m\})$. Since
  \[
  L_P w = (\beta^2 - r^{-1} \beta -1) e^{-\beta r} < (\beta^2-1)
  e^{-\beta r},
  \]
  we obtain $L_P(v - C_3 w) \geq -Q(v) + C_3 (1-\beta^2)e^{-\beta r}
  \geq 0$ for all $r \geq R_0$; in addition $v - C_3 w \leq 0$ along
  $r = \rho$ for $C_3$ sufficiently large, and $v - C_3 w \to 0$ as $r
  \to \infty$.  We conclude that $v \leq C_3 e^{-\beta r}$ for $r \geq
  R_0$.

With this argument we have improved the exponent in the decay rate
from $m < 1$ to any $\beta \in (m, \min\{1, 2m\})$. Iterating this a finite
number of times shows that we can obtain a decay rate with exponent
as close to $1$ as we please. In other words, we conclude that $v \leq C_4
e^{-(1-\e)r}$ for some very small $\e > 0$, and hence $|Q(v)| \leq
C_5 e^{-2(1-\e) r}$, and then that $||Q(v)||_{0,\mu;\calA(\rho)}
\leq C_6 e^{-2(1-\e)\rho}$ as well.

  Now write $v = G_P( -Q(v))$ as in \eqref{Green1}. Since $L_P$ commutes with rotations
and translations in $P$, the Green function $G_P( (s,t), (s', t'))$ depends only on the (Euclidean)
distance between $(s,t)$ and $(s',t')$, and hence reduces to a function of one variable which
satisfies a modified Bessel equation. We thus arrive at the well-known classical formula 
\begin{equation}
G_P( (s,t), (s',t')) = \frac{1}{4\pi} K_0 ( \sqrt{ |s-s'|^2 + |t-t'|^2}).
\label{Green2}
\end{equation}
Here $K_0(r)$ is the Bessel function of imaginary argument, see
  \cite{Lebedev}, which has the well-known asymptotics
  \begin{equation}
    \begin{aligned}
      K_0(r) & \sim \log r\ \mbox{as}\ r \searrow 0,  \\
      K_0(r) & \sim r^{-\frac12} e^{-r} + \calO( r^{-\frac32}e^{-r})\
      \mbox{as}\ r \nearrow \infty
    \end{aligned}
    \label{Green3}
  \end{equation}
  (we are omitting the normalizing constant $(4\pi)^{-1}$ for
  simplicity.)  It is a straightforward exercise to check that if $f$
  is continuous and $|f| \leq C e^{-2(1-\e)r}$, then
  \begin{multline*}
    v = G_P f  = \int_{\RR^2} G_P( (s,t), (s',t')) f(s',t') \, ds' dt'  \\
    = A(\theta) r^{-1/2} e^{-r} + \calO(r^{-3/2}e^{-r}),
  \end{multline*}
  and if $f \in \calC^\infty$, then $A \in
  \calC^\infty(\mathbb{S}^1)$. We refer to \cite{Melrose} for an
  explanation of the linear mapping $f(s,t) \mapsto A(\theta)$ (it is
  the adjoint of the Poisson operator and is closely related to the
  scattering operator for $L_P$).

  To complete the argument, suppose that $v > 0$. Since
  $A(\theta)e^{-r}r^{-1/2}$ dominates the expansion for $r$ large,
  clearly $A(\theta) > 0$.
\end{proof}

\medskip

\noindent{\bf Asymptotics of Jacobi fields:} Let $\Sigma$ be a
  complete properly embedded minimal surface in $\HH^2 \times \RR$
  with a finite number of ends, each one asymptotic to a vertical
  plane $P_\alpha$, $\alpha \in A$. We could also let $\Sigma$ be an
  exterior region in any such surface, i.e. the discussion below
  incorporates the case where $\Sigma$ has compact boundary. We now
  recall some facts about the asymptotic properties of solutions of
  the equation $L_\Sigma \psi = 0$, where $L_\Sigma$ is the Jacobi
  operator \eqref{JacobiopSigma}. This operator has the particularly
  simple form \eqref{JacobiopP} when $\Sigma$ is a vertical plane $P$,
  and this provides the asymptotic model for $L_\Sigma$ in our more
  general setting.  When $\Sigma$ is a horizontal catenoid $K_\eta$,
  we write the Jacobi operator as $L_\eta$. 

There are many classical sources for the material in this
  section; we refer in particular to \cite{Melrose} since the
  treatment is specifically geometric. 

It is a classical fact in scattering theory that any solution
  of $L_P \psi = 0$ (defined either on all of $P$ or just on the
  complement of a relatively compact domain) has a so-called far-field
  expansion as $r \to \infty$; this takes the form
  \begin{equation}
    \psi(r,\theta) \sim  (F^+(\theta) r^{-\frac12} + \calO(r^{-\frac 32})) e^r + (F^-(\theta) r^{-\frac12} + \calO(r^{-\frac32})) e^{-r}.
    \label{farfield}
  \end{equation}
  In the particular case where $P$ is all of $\RR^2$ and has no
  boundary, then $F^-(\theta) = F^+(-\theta)$, but in general the
  relationship is more complicated. The subtlety in such an expansion
  is that the coefficients $F^\pm(\theta)$ are allowed to be arbitrary
  distributions on $\mathbb{S}^1$, and if these coefficients are not
  smooth, then the expansion must be interpreted weakly, i.e.\ as
  holding only after we pair with an arbitrary test function
  $\varphi(\theta)$.  The simplest `plane wave' solution of this
  equation, $e^s$, exhibits an expansion with coefficients which are
  Dirac delta functions:
  \[
  e^s \sim \delta(\theta) r^{-\frac12} e^r + \delta( -\theta)
  r^{-\frac12} e^{-r}.
  \]
  One can interpret this as reflecting the obvious fact that this
  solution grows exponentially as $s \to \infty$ (which corresponds to
  $\theta = 0$) and decays exponentially as $s \to -\infty$ (which is
  $\theta = \pi$).  On the other hand, the Green function for this
  operator with pole at $0$, $G_P( (s,t), (0,0)) = K_0(r)$, has
  \[
  G_P \sim r^{-\frac12} e^{-r}\ \ \mbox{as}\ r \to \infty,
  \]
  i.e.\ $F^+ = 0$ and $F^- = 1$. Similarly, since $\del_s$ commutes
  with $L_P$, the function $\del_s G_P$ is another Jacobi field, and
  it has
  \[
  \del_s G_P \sim r^{-\frac12} \cos \theta \, e^{-r}.
  \]

Now return to the Jacobi operator on more general minimal
  surfaces with ends asymptotic to vertical planes.
 \begin{prop}
 Suppose that $L_\Sigma \psi = 0$. Let $r_\alpha$ denote the radial function on the asymptotic end $P_\alpha$,
 and transfer this (via the horizontal graph description) to a function on $\Sigma$. Then $\psi$ has the far-field expansion
 \[
 \psi \sim \sum_{\alpha \in A} (F^+_\alpha(\theta) r_\alpha^{-\frac12}
 + \calO(r_\alpha^{-\frac 32})) e^{r_\alpha} + (F^-_\alpha(\theta)
 r_\alpha^{-\frac12} + \calO(r_\alpha^{-\frac32})) e^{-r_\alpha}.
 \]
 \label{asympprop}
\end{prop}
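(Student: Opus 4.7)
I would work end by end, pulling the Jacobi equation back to the asymptotic vertical plane and treating it as a short-range perturbation of the model operator $L_P = \Delta_{\RR^2} - 1$.

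First, fix an end of $\Sigma$ and apply Proposition~\ref{genasymprop} to realize it, outside a compact set, as a normal graph over an exterior region $E_\alpha \subset P_\alpha$ with graph function $v_\alpha$ satisfying
\[
  v_\alpha(r,\theta) = A_\alpha(\theta) r^{-\frac12} e^{-r} + \calO(r^{-\frac32} e^{-r}).
\]
Use this graph to transport $\psi$ to a function on $E_\alpha$ and to pull $L_\Sigma$ back to a second-order elliptic operator on $E_\alpha$. Writing $L_\Sigma = L_{P_\alpha} + B_\alpha$, a direct computation (expanding the Jacobi operator around $P_\alpha$ as one did for $\calN$ in \eqref{Taylor}) shows that $B_\alpha$ is a second-order differential operator whose coefficients, together with all their derivatives, are bounded by $Cr^{-1/2}e^{-r}$ and are polyhomogeneous at infinity. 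In particular $B_\alpha$ is short-range in the scattering-theoretic sense.

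Second, the asserted expansion is then the classical far-field expansion for the model $L_P$ transferred to the perturbed operator. On an end, $L_\Sigma \psi = 0$ becomes $L_{P_\alpha} \psi = -B_\alpha \psi$, so applying $G_{P_\alpha}$ gives an integral equation
\[
  \psi = \psi_0 + G_{P_\alpha}(-B_\alpha \psi), \qquad L_{P_\alpha} \psi_0 = 0,
\]
where $\psi_0$ is determined modulo compactly supported errors from the boundary data at $\del E_\alpha$. Since $\psi_0$ is a Jacobi field for $L_P$, it has the model far-field expansion \eqref{farfield}. For the correction term, one shows iteratively that if $\psi$ has at most exponential growth $\calO(e^r)$ on the end, then $B_\alpha \psi = \calO(r^{-1/2})$, so the Green's function estimates coming from \eqref{Green2}--\eqref{Green3} produce a contribution which is of the same form \eqref{farfield} but with coefficients one order smaller in $r^{-1}$. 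Iterating (as in the bootstrap used in the proof of Proposition~\ref{genasymprop}) produces the full polyhomogeneous expansion, with coefficients $F^\pm_\alpha$ that are in general distributions on $\mathbb{S}^1$.

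The main obstacle, and where I would lean on the framework of \cite{Melrose}, is keeping track of the distributional character of $F^\pm_\alpha$ and verifying that the iteration closes in the right polyhomogeneous conormal class at infinity. In the smooth-coefficient case (e.g.\ when $\psi$ decays at one end) the statement is an easy consequence of elliptic regularity and the Green function asymptotics \eqref{Green3}; the subtlety is only the regularity of the angular coefficients when both $e^r$ and $e^{-r}$ channels are present. This is precisely the content of scattering theory for $L_P$: the Poisson and scattering operators map smooth functions on $\mathbb{S}^1$ into solutions with the stated expansion, and short-range perturbations $B_\alpha$ do not change the structure of the expansion, only perturb the coefficients $F^\pm_\alpha$.
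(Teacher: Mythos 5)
The paper does not actually prove Proposition~\ref{asympprop}: it is presented as a classical fact of (geometric) scattering theory for short-range perturbations of $L_P=\Delta_{\RR^2}-1$, with the reader referred to \cite{Melrose}. Your outline --- pulling $L_\Sigma$ back to the plane via the graph description, noting the exponentially decaying perturbation $B_\alpha$, and bootstrapping the integral equation $\psi=\psi_0+G_{P_\alpha}(-B_\alpha\psi)$ within the polyhomogeneous framework --- is exactly the standard argument underlying that citation, so it is consistent with the paper's treatment, with the remaining technicalities (distributional coefficients $F^\pm_\alpha$, a priori growth control on $\psi$) being precisely what the cited scattering-theoretic machinery supplies.
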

The set of possible leading (distributional)
coefficients $\{ F^+_\alpha, F^-_\alpha\}$ which can occur is called
the scattering relation for $L_\Sigma$. If $\Sigma$ is preserved by
the reflection ${\cal R}_t$ and we restrict to functions which are
even with respect to ${\cal R}_t$, then any collection
$\{F^+_\alpha\}$ uniquely determines a solution, and hence determines
the other set of coefficients $\{F^-_\alpha\}$; the same is true on
the complement of a finite dimensional subspace if we drop the
evenness condition.  The map $\{F^+_\alpha\} \mapsto \{F^-_\alpha\}$
is called the scattering operator.


 \medskip

\noindent{\bf Geometric Jacobi fields:}
We now describe the special family of global Jacobi fields on the
horizontal catenoid $K_\eta$ generated by the `integrable', or
geometric, deformations of $K_\eta$. In other words, these Jacobi
fields are tangent at $K_\eta$ to families of horizontal catenoids.

We have already described the space $\calC_K$ of all horizontal
catenoids which are symmetric about the plane $t=0$. Indeed, there is
a unique such catenoid associated to any two geodesics $\gamma_{\pm}$
in $\HH^2$ with $0<\mbox{dist}\,(\gamma_+, \gamma_-) = \eta <
\eta_0$. Thus $\calC_K$ is identified with an open subset of the space
of distinct four-tuples of points on $\mathbb{S}^1$: writing any such
four-tuple in consecutive order around $\mathbb{S}^1$ as
$(\zeta_{-,1}, \zeta_{-,2}, \zeta_{+,1}, \zeta_{+,2})$, then we let
$\gamma_\pm$ be the unique geodesic connecting $\zeta_{\pm,1}$ to
$\zeta_{\pm,2}$.  Note that we do not allow arbitrary four-tuples
simply because the distances between these geodesics must be less than
$\eta_0$.  In any case, $\dim \calC_K = 4$.
 
There are various different ways to describe the complete family of
horizontal catenoids (symmetric about $\{t=0\}$). First we can vary
the points $\zeta_{\pm,\ell}$ independently. Second, we can transform
$K_\eta$ using the three-dimensional space of isometries of $\HH^2$,
and then, to obtain the entire four-dimensional family, we augment
this by the extra deformation corresponding to changing the parameter
$\eta$, i.e.\ moving the geodesics relative to one another.

Using the first parametrization of this family, let $\zeta(\epsilon)$
be a smooth curve in the space of (allowable) four-tuples where we
vary only one end of one of the geodesics. The corresponding Jacobi
field decays exponentially in all directions but one (this holds by
Proposition~\ref{decayvprop} and the behavior of the hyperbolic
metric at infinity). For example, if we vary only $\zeta_{+,2}$, then
this Jacobi field decays exponentially in all directions at infinity
on $P_-$, while on $P_+$, it decays exponentially as $s \to -\infty$
but grows exponentially as $s \to +\infty$ (we assume that $s$
increases as we move along $\gamma_+$ from $\zeta_{+,1}$ to
$\zeta_{+,2}$).

In computing the infinitesimal variations here, note that if
$K_\eta(\epsilon)$ is a one-parameter family of horizontal catenoids
as described here, with $K_\eta(0) = K_\eta$, then for $\epsilon \neq
0$ we can write $K_\eta(\epsilon)$ as a normal graph over some proper
subset of $K_\eta$. However, as $\epsilon \to 0$, this proper subset
fills out all of $K_\eta$, and hence the derivative of the normal
graph function at $\epsilon = 0$ is defined on the entire surface.

Denote by $\Phi_{\pm, \ell}$ the Jacobi field generated by varying only the one 
point $\zeta_{\pm, \ell}$, and note that each $\Phi_{\pm,  \ell} \sim e^s = e^{r\cos\theta}$.  
For any four real numbers $E_{\pm,   \ell}$, $\ell = 1,2$, we define
\[
\Phi_E = \sum_{\pm, \ell} E_{\pm,\ell} \Phi_{\pm,\ell},
\]

\medskip

\noindent{\bf $K_\eta$ as a horizontal bigraph:}
The geometric Jacobi fields can be used to show that $K_\eta$ is a
horizontal bigraph in two distinct directions: over the vertical plane
$\gamma_0 \times \RR$ and also over the vertical plane $\Gamma \times
\RR$.  These two new graphical representations were also obtained in
the recent paper \cite{hnst} using an Alexandrov reflection
argument. We present a separate argument using these Jacobi fields
since it is somewhat less technical.  Note that the assertion about
horizontal graphicality must be clarified first since there are two
geometrically natural ways of writing a surface with a vertical end in
$\HH^2 \times \RR$ as a horizontal graph over a vertical plane.
Indeed, let $\gamma(s)$ be an arclength parametrized geodesic in
$\HH^2$. We can then coordinatize $\HH^2$ using Fermi coordinates off
of $\gamma$, i.e.\ $(s,\sigma) \mapsto \exp_{\gamma(s)}(\sigma
\nu(s))$ (where $\nu$ is the unit normal), or else by $(s,\sigma)
\mapsto D_\sigma(\gamma(s))$, where $D_\sigma$ is the one-parameter
family of isometries of $\HH^2$ which are dilations along the geodesic
$\gamma^\perp$ orthogonal to $\gamma$ and meeting $\gamma$ at
$\gamma(0)$.  We use the latter, and then say that a curve is a graph
over $\gamma$ in the direction of $\gamma^\perp$ if $\sigma = f(s)$.
Hence $f \equiv \mbox{const.}$ corresponds to a geodesic $\gamma'$
which is hyperparallel to $\gamma$ and perpendicular to
$\gamma^\perp$.  This transfers immediately to the notion of a
horizontal graph over $\gamma \times \RR$ in the direction of
$\gamma^\perp$ in $\HH^2 \times \RR$.

Now, recall the two orthogonal geodesics $\Gamma$ and $\gamma_0$ (see
Figure \ref{fig:one}). The vertical plane $\Gamma\times\R$
(resp. $\gamma_0\times\R$) fixed by ${\cal R}_s$ (resp. ${\cal R}_o$) bisects
$K_\eta$, decomposing it into two pieces denoted by $K_\eta^{s,1},
K_\eta^{s,-1}$ (resp. $K_\eta^{o,1}, K_\eta^{o,-1}$), which are
interchanged by this reflection.  The result of Hauswirth, Nelli, Sa
Earp and Toubiana \cite[Lemmas 3.1 and 3.2]{hnst} is the following:
\begin{prop}
  For each $\eta \in (0,\eta_0)$, 
  $K_\eta^{o,+}$ is a horizontal graph in the
  direction of $\Gamma$ over some portion of the vertical plane
  $\gamma_0 \times \RR$ while $K_\eta^{s,+}$ is a horizontal graph in
  the direction of $\gamma_0$ over some portion of the vertical plane
  $\Gamma \times \RR$.
  \label{horbigraph}
\end{prop}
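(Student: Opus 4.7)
The two assertions are symmetric, so I focus on showing $K_\eta^{o,+}$ is a horizontal graph over a portion of $\gamma_0\times\R$ in the direction of $\Gamma$. The plan is to use the one-parameter group $\{D_\sigma\}_{\sigma\in\R}$ of hyperbolic translations along $\Gamma$, lifted trivially to $\HH^2\times\R$. Its infinitesimal generator $Y$ is a horizontal Killing field, tangent to $\Gamma\times\R$; its integral curves are exactly the fibres over $\gamma_0\times\R$ of the projection along $\Gamma$ that we want to be injective on $K_\eta^{o,+}$. The key object is the bounded Jacobi field
\[
u := \langle Y, N\rangle
\]
on $K_\eta$. Graphicality of $K_\eta^{o,+}$ reduces to showing $u>0$ on its interior, for this yields transversality of $Y$ with $K_\eta^{o,+}$ and, together with the graphical description of the end over $P_{+1}$, a standard connectedness argument forces each $Y$-orbit to meet $K_\eta^{o,+}$ in at most one point.

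The sign of $u$ is analysed in three regimes. First, the conjugation relation ${\cal R}_o D_\sigma {\cal R}_o^{-1}=D_{-\sigma}$ gives $({\cal R}_o)_* Y=-Y$; combined with the ${\cal R}_o$-invariance of $K_\eta$, a suitable choice of orientation makes $u$ odd under ${\cal R}_o$, so $u$ vanishes precisely on the neck $K_\eta\cap(\gamma_0\times\R)=\partial K_\eta^{o,+}$. Second, a direct computation in the upper half-plane model (placing $\Gamma$ on the $y$-axis, so that $D_\sigma(z)=e^\sigma z$ and $Y=x\partial_x+y\partial_y$) gives $u|_{P_{+1}}=\sec\phi$ in the natural angular parameter $\phi$ on $\gamma_{+1}$; in particular $u>0$ everywhere on $P_{+1}$ and grows like $e^r$ at infinity. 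Applying Proposition~\ref{asympprop} to the graph function of the end of $K_\eta^{o,+}$ over $P_{+1}$ then propagates this positivity to the complement of some compact subset of $K_\eta^{o,+}$. Third, to exclude interior zeros, suppose $u$ vanishes at some interior point of $K_\eta^{o,+}$. The interior nodal set, together with the sign information on the neck and at infinity, produces a relatively compact subdomain $\Omega\subset K_\eta^{o,+}$ on whose interior $u$ is sign-definite and on whose boundary $u=0$, so that $\Omega$ is a zero-eigenvalue Dirichlet domain for $L_\eta$. Doubling $\Omega$ across the neck by ${\cal R}_o$-antisymmetry gives a relatively compact subdomain $\hat\Omega\subset K_\eta$ carrying a nontrivial Jacobi field even under ${\cal R}_t$ with zero data on $\partial\hat\Omega$; subtracting a suitable element of the geometric family $\Phi_E$ with matching leading far-field coefficient on $P_{+1}$ produces a nontrivial decaying ${\cal R}_t$-even Jacobi field on $K_\eta$, contradicting horizontal nondegeneracy (Theorem~\ref{nondeghc}). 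Hence $u>0$ on the interior of $K_\eta^{o,+}$.

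Once transversality is established, graphicality propagates inward from the end by connectedness, proving the first bigraph statement. The second assertion follows by the same argument with $Y$ replaced by the Killing field of hyperbolic translations along $\gamma_0$, and with the roles of $\Gamma$ and $\gamma_0$ (and of ${\cal R}_o$ and ${\cal R}_s$) exchanged. The main obstacle is the nodal-domain exclusion in the third step: since $u$ grows exponentially on the end, Theorem~\ref{nondeghc} does not apply to $u$ directly, and one must first subtract a geometric Jacobi field from $\Phi_E$ with the correct far-field behaviour to obtain an honest decaying object before invoking horizontal nondegeneracy.
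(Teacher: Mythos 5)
Your reduction of graphicality to the strict positivity of the Jacobi field $u=\langle Y,N\rangle=\Phi_o$ on $K_\eta^{o,+}$, its vanishing on the neck by ${\cal R}_o$-oddness, and its positivity outside a compact set via the graphical ends, all match the paper's setup. The gap is in your third step, which is where the real work lies. First, there is a circularity: you exclude interior zeros by appealing to horizontal nondegeneracy (Theorem~\ref{nondeghc}), but in the paper that theorem is Proposition~\ref{specnondeg}, whose proof uses precisely the positivity of $\Phi_o$ and $\Phi_s$ established in Proposition~\ref{horbigraph} (via Lemma~\ref{gclem}, to rule out a zero ground-state eigenvalue on the ${\cal R}_s$- and ${\cal R}_o$-odd subspaces). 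So the statement you invoke is downstream of the one you are proving.

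Second, even granting nondegeneracy, the argument as written does not produce a contradiction. If $u$ changes sign, a negative nodal component $\Omega$ gives a relatively compact domain on which $0$ is the first Dirichlet eigenvalue of $-L_\eta$; but this is perfectly compatible with the absence of decaying even Jacobi fields, since $\lambda_0(\eta)<0$ guarantees that such zero-eigenvalue Dirichlet domains exist on every $K_\eta$. And the proposed repair -- subtracting an element $\Phi_E$ of the geometric family with matching far-field coefficient to get a decaying even Jacobi field -- is vacuous here, because $u=\Phi_o$ is itself a geometric Jacobi field: the subtraction yields the zero function, not a contradiction. (A genuinely direct route would be to plug $u\chi_\Omega$, oddly reflected, into the quadratic form and contradict strict positivity of $-L_\eta$ on the ${\cal R}_o$-odd subspace -- but that positivity is again part of Proposition~\ref{specnondeg} and hence circular.) The paper avoids all of this by a continuity method in $\eta$: it first proves graphicality, hence transversality of the Killing field, for the extremal Scherk surface $K_{\eta_0}$ by Alexandrov reflection, transfers this to $K_\eta$ for $\eta$ near $\eta_0$ by smooth convergence, and then shows positivity of $\Phi_o$ (and $\Phi_s$, after first handling $|t|>T$ separately) persists as $\eta$ decreases, using the strong maximum principle and the Hopf boundary point lemma together with the uniform positivity of $\Phi_o$ near infinity. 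Some version of this continuity-from-the-Scherk-limit step, or an independent Alexandrov-type argument as in \cite{hnst}, is needed; your sketch as it stands does not close the compact region.
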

As noted, we sketch an independent proof of this.
\begin{proof}
  First, notice that the first assertion in Proposition
  \ref{horbigraph} is equivalent to the fact that the Jacobi field
  $\Phi_o$ generated by dilations along $\Gamma$ is strictly positive
  on $K_\eta^{o,+}$ and vanishes along the fixed point set of ${\cal R}_o$;
  similarly, the second assertion is equivalent to claiming that the
  Jacobi field $\Phi_s$ generated by dilations along $\gamma_0$ is
  strictly positive on $K_\eta^{s,+}$ and vanishes along the fixed
  point set of ${\cal R}_s$.

  The proof has two steps. We first show that these Jacobi fields have
  the required positivity property when $\eta$ is very close to the
  upper limit $\eta_0$. We then show that as we vary $\eta$ from
  $\eta_0$ down to $0$, they maintain this positivity.

  We begin by asserting that the limiting Scherk surface $K_{\eta_0}$
  is a horizontal bigraph over $\Gamma \times \RR$ and also over
  $\gamma_0 \times \RR$. In fact, this surface has a symmetry obtained
  by rotating by $\pi/2$ and flipping $t \mapsto -t$; this
  interchanges these two graphical representations.  This can be
  proved by a simple Alexandrov reflection argument: Consider the
  family of geodesics $\gamma_\sigma$ perpendicular to $\Gamma$ and
  intersecting it at $\Gamma(\sigma)$ (where $\Gamma(0) = \Gamma \cap
  \gamma_0$). The plane $\gamma_\sigma \times \RR$ only intersects
  $K_{\eta_0}$ when $\sigma < \eta_0/2$, and for $\sigma$ just
  slightly smaller, the reflection of the `smaller' portion of
  $K_{\eta_0}$ across this vertical plane does not intersect the other
  component. Pushing $\sigma$ lower, it is standard to see that these
  two half-surfaces do not intersect until $\sigma = 0$, in which case
  they coincide.  These planes of reflection are the images of
  $\gamma_0 \times \RR$ with respect to dilation along $\Gamma$, so we
  deduce that the vector field $X$ generated by this dilation is
  everywhere transverse to the component $K_{\eta_0}^+$ of
  $K_{\eta_0}$ on one side of this plane of symmetry. Note finally
  that the angle between $X$ and $K_{\eta_0}^+$ is bounded below by a
  positive constant if we remain a bounded distance away from
  $\partial K_{\eta_0}^+$.

  Now recall that an appropriate vertical translate of $K_\eta$
  converges locally uniformly in $\calC^\infty$ to $K_{\eta_0}$, and
  indeed this convergence (of the translated $K_\eta$) is uniform in
  the half-plane $t \geq -C$ for any fixed $C$. It is then clear that
  the angle between $X$ and $K_\eta^{o,+}\cap \{t\geq -C\}$ is also
  positive everywhere when $\eta$ is sufficiently close to $\eta_0$.
  Since $K_\eta$ is invariant by ${\cal R}_t$, this finishes the first
  step.

  For the second step, to be definite consider $\Phi_o$, and let us
  study what happens as $\eta$ varies in the interval $(0,\eta_0)$. We
  use that $L_\eta \Phi_o = 0$ and $\Phi_o$ is nonnegative on
  $K_\eta^{o,+}$ for $\eta$ close to $\eta_0$, vanishing only on the
  boundary, and by the Hopf boundary point lemma, has strictly
  positive normal derivative there.  As $\eta$ decreases, $\Phi_o$
  must remain strictly positive in the interior; the alternative would
  be that it develops some interior zeroes or else its normal
  derivative vanishes at the boundary while the function still remains
  nonnegative in the interior, and both contradict the maximum
  principle. Note that we are using two additional facts: first, we
  use the form of the maximum principle which states that a
  nonnegative solution of $(\Delta + V) u = 0$ cannot have an interior
  zero, regardless of the sign of $V$; {we also use that because of
    the graphical representation of the ends, it is clear that
    $\Phi_o$ is bounded away from $0$ outside a compact set.}  This
  proves that $\Phi_o > 0$ on $K_\eta^{o,+}$ for all $\eta \in
  (0,\eta_0)$, which shows that this half remains graphical.

  The case of the Jacobi field $\Phi_s$ is quite similar. Taking into
  account the asymptotic behavior of $K_\eta$, it is not hard to see
  that there exists a constant $T \gg 0$ so that $K_\eta \cap \{
  |t|>T\}$ is a horizontal graph over the vertical plane $\Gamma
  \times \RR$, $\forall \eta \in (0,\eta_0)$. We can then apply the
  same argument as in the previous paragraphs to $K_\eta \cap \{ |t|
  \leq T\}$.
\end{proof}

\medskip

\noindent{\bf Fluxes:}
Closely related to the geometry in the last subsection is the
computation of the flux homomorphism.  We recall that if $\Sigma$ is
an oriented minimal surface in an ambient space $(Z,g)$, then its flux
is a linear mapping
\[
\calF: H_1(\Sigma) \times \calK(Z,g) \longrightarrow \RR,
\]
where $\calK(Z,g)$ is the space of Killing vector fields on $Z$, i.e.\
infinitesimal generators of one-parameter families of isometries. The
definition is simple: if $c \in H_1(\Sigma)$ is a homology class
represented by a smooth oriented closed curve $\gamma$ and if $X \in
\calK(Z,g)$, then
\[
\calF(c, X) = \int_\gamma X \cdot \nu\, ds,
\]
where $\nu$ is the unit normal to $\gamma$ in $\Sigma$.  This is only
interesting when the ambient space $Z$ admits Killing fields, but this
is certainly the case in our setting. Indeed, $\calK( \HH^2 \times
\RR)$ (with the product metric) is four-dimensional: there is one
Killing field $X_t$ generated by vertical translation, and a
three-dimensional space of Jacobi fields on $\HH^2$ which lift to the
product to act trivially on the $\RR$ factor.  If $K_\eta$ is a
horizontal catenoid and if $o = \gamma_0 \cap \Gamma \in \HH^2$ is its
`center', then this three-dimensional space is generated by the
infinitesimal rotation $X_R$ around $o$, and the infinitesimal
dilations $X_{\gamma_0}$ and $X_\Gamma$ along $\gamma_0$ and $\Gamma$,
respectively.

The first homology (with real coefficients), $H_1(K_\eta)$, is
one-dimensional and is generated by the loop $(\gamma_0 \times \RR)
\cap K_\eta$. Thus it suffices to consider $\calF([\gamma], X_j)$
where $X_j = X_t$, $X_R$, $X_{\gamma_0}$ or $X_\Gamma$.
\begin{prop}
  The quantity $\calF([\gamma], X_j)$ vanishes when $X = X_t$, $X_R$
  or $X_{\gamma_0}$, and is nonzero when $X = X_\Gamma$.
\end{prop}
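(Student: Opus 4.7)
The plan is to combine the rich discrete symmetry of $K_\eta$ with the observation that $\gamma$ is the fixed set of the reflection ${\cal R}_o$. The starting point is a structural identification of the conormal: for any $p \in \gamma$, the tangent plane $T_p K_\eta$ is necessarily $({\cal R}_o)_*$-invariant, and so decomposes into its $\pm 1$ eigenspaces inside $T_p(\HH^2 \times \RR)$, namely $T_p(\gamma_0 \times \RR)$ (eigenvalue $+1$) and $\RR \cdot X_\Gamma(p)$ (eigenvalue $-1$). Since $T_p\gamma \subset T_p(\gamma_0 \times \RR)$ already occupies one direction of $T_p K_\eta$, and since the horizontal bigraph representation of Proposition~\ref{horbigraph} rules out $T_p K_\eta = T_p(\gamma_0 \times \RR)$, the other direction of $T_p K_\eta$ must be the $X_\Gamma$-direction. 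Thus the unit conormal $\nu$ is pointwise parallel to $X_\Gamma$ along $\gamma$, with a constant sign of proportionality by continuity.

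Given this, two of the vanishing statements are immediate. Both $X_t = \partial_t$ and $X_{\gamma_0}$ are tangent to $\gamma_0 \times \RR$ at every point of that plane (the first trivially; the second because the dilation along $\gamma_0$ preserves $\gamma_0$). Hence $X_t \cdot \nu \equiv X_{\gamma_0} \cdot \nu \equiv 0$ pointwise on $\gamma$, so both corresponding fluxes vanish without any integration.

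The remaining two fields $X_R$ and $X_\Gamma$ are, by contrast, both perpendicular to $\gamma_0 \times \RR$ at points of $\gamma_0$: the rotational orbit through $q\in\gamma_0$ is perpendicular to the radial geodesic $\gamma_0$, and the equidistant curves of $\Gamma$ cut $\gamma_0$ orthogonally. So both are parallel to $\nu$ along $\gamma$, and the two are distinguished by parity under ${\cal R}_s$. The rotation $X_R$ satisfies $({\cal R}_s)_* X_R = -X_R$ (a reflection conjugates a rotation to its inverse), while $\nu$ is itself ${\cal R}_s$-invariant (because $X_\Gamma$ is), so $X_R \cdot \nu$ is odd along the ${\cal R}_s$-invariant loop $\gamma$ and integrates to zero. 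By contrast $X_\Gamma$ is ${\cal R}_s$-invariant (${\cal R}_s$ fixes $\Gamma$ pointwise), so $X_\Gamma \cdot \nu$ has constant sign and, since $X_\Gamma$ vanishes nowhere on $\HH^2$, $\calF([\gamma], X_\Gamma) = \pm \int_\gamma |X_\Gamma|\, ds \neq 0$.

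The main substantive step, and the only place where we use genuinely nontrivial information about $K_\eta$, is the identification $T_p K_\eta = T_p \gamma \oplus \RR\cdot X_\Gamma(p)$ along the neck; excluding the alternative $T_p K_\eta = T_p(\gamma_0\times \RR)$ is where Proposition~\ref{horbigraph} enters. Everything else is a direct bookkeeping of tangency/orthogonality to $\gamma_0 \times \RR$ and of parity under ${\cal R}_s$.
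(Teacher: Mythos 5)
Your overall strategy is sound and is essentially the paper's: the vanishing statements come from the discrete symmetries, and the nonvanishing one from the fact that $\nu$ is parallel to $X_\Gamma$ along $\gamma$ (the paper simply asserts this last fact; you attempt to prove it, which is the right instinct). Your parity bookkeeping for $X_R$ under ${\cal R}_s$, the pointwise tangency of $X_t$ and $X_{\gamma_0}$ to $\gamma_0\times\RR$, and the eigenspace decomposition of $T_pK_\eta$ under $d{\cal R}_o$ are all correct.

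The one step that is not justified as written is the exclusion of the alternative $T_pK_\eta=T_p(\gamma_0\times\RR)$ by appeal to Proposition~\ref{horbigraph}. That proposition only says that the open half $K_\eta^{o,+}$ is a horizontal graph over a portion of $\gamma_0\times\RR$, and graphicality of an open piece does not by itself forbid tangency along its boundary curve: compare the vertical bigraph description, where each half of $K_\eta$ is a graph over $\Omega_\eta$ even though the surface is vertical (infinite gradient) along $C_{\pm1}$. The needed transversality is true, but you should supply it separately. Two quick repairs: (i) $X_\Gamma$ is odd under ${\cal R}_o$ (reflection across $\gamma_0$ conjugates the translation along $\Gamma$ to its inverse) while the unit normal $N$ of $K_\eta$ is ${\cal R}_o$-equivariant (the sign is constant on the connected surface and is checked to be $+1$ at one point using the vertical bigraph, where the graph function is ${\cal R}_o$-invariant); hence the Jacobi field $X_\Gamma\cdot N$ is odd under ${\cal R}_o$ and vanishes on the fixed-point set $\gamma$, i.e.\ $X_\Gamma(p)\in T_pK_\eta$ for all $p\in\gamma$, which together with $X_\Gamma\perp T_p\gamma$ gives $\nu\parallel X_\Gamma$ directly — no dichotomy needed. (ii) Alternatively, if $T_pK_\eta$ equaled $T_p(\gamma_0\times\RR)$, then near $p$ the embedded surface would be a graph $\sigma=g$ in the $D_\sigma$-coordinates, and ${\cal R}_o$-invariance (which is $\sigma\mapsto-\sigma$ there) would force $g\equiv -g\equiv 0$, so $K_\eta$ would coincide with $\gamma_0\times\RR$ by unique continuation, which is absurd. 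With either repair inserted, your proof is complete and runs parallel to the paper's, the main difference being that you derive two of the vanishings pointwise from $\nu\parallel X_\Gamma$ rather than from oddness of the Killing fields under reflections preserving $\gamma$.
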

\begin{proof}
  The vector field $X_t$ is odd with respect to the reflection ${\cal R}_t$;
  similarly, $X_R$ and $X_{\gamma_0}$ are odd with respect to one or
  more of the reflections ${\cal R}_o$, ${\cal R}_s$. Since the choice of generator
  $\gamma$ for $H_1$ is invariant under all three reflections, it is
  easy to see that $\calF([\gamma], X) = 0$ when $X$ is any one of
  these three vector fields. However, $X_\Gamma$ is a positive
  multiple of $\nu$ at every point of $\gamma$, so that
  $\calF([\gamma], X_\Gamma) > 0$, as claimed.

  We do not actually compute the value of this one nonvanishing flux.
\end{proof}

Unlike many other gluing constructions for minimal surfaces, these
fluxes turn out to play no interesting role in the analysis
below. This traces, ultimately, to the fact that we will be gluing
together copies of horizontal catenoids and these are already
`balanced'.  We explain this point further at the end of \S 5.

\medskip

\noindent{\bf Spectrum of the Jacobi operator:} 
We now study the $L^2$ spectrum of the Jacobi operator $L_\eta$.  By the general considerations described
above, 
\[
\mbox{spec}(-L_\eta) = \{\lambda_j(\eta)\}_{j=1}^N \cup [1,\infty).
\]
The ray $[1,\infty)$ consists of absolutely continuous spectrum (this is because $K_\eta$ is a decaying
perturbation of the union of two planes outside a compact set, so that the essential spectrum of
$-L_\eta$ coincides with that of $-L_P$), while the discrete spectrum lies entirely in $(-\infty, 1)$; note that,
even counted according to multiplicity, the number of eigenvalues may depend on $\eta$.

Our main result is the following: 
\begin{prop}
  For each $\eta \in (0,\eta_0)$, the only one of the eigenvalues of
  $-L_\eta$ which is negative is $\lambda_0(\eta)$, and only $
  \lambda_1(\eta) = 0$. All the remaining eigenvalues are strictly
  positive. The ground-state eigenfunction $\phi_0 = \phi_0(\eta)$ is
  even with respect to all three reflections, ${\cal R}_t$, ${\cal
    R}_s$ and ${\cal R}_o$; the eigenfunction $\phi_1$, which is the
  unique $L^2$ Jacobi field, is generated by vertical translations and
  is odd with respect to ${\cal R}_t$ but even with respect to ${\cal
    R}_s$ and ${\cal R}_o$.  In particular, if we restrict $-L_\eta$
  to functions which are even with respect to ${\cal R}_t$, then
  $L_\eta$ is nondegenerate.
\label{specnondeg}
\end{prop}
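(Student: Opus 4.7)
The plan is to decompose $L^2(K_\eta)$ into the eight parity subspaces
$L^2_{(\epsilon_t,\epsilon_s,\epsilon_o)}$ under the three commuting
reflections ${\cal R}_t,{\cal R}_s,{\cal R}_o$; since $L_\eta$ commutes
with each, every such subspace is preserved, and the spectral problem
reduces to analyzing $L_\eta$ on each.

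The argument will use three geometric Jacobi fields. The first is the
vertical-translation field $\psi := \langle N,\partial_t\rangle$, which
lies in the $(-,+,+)$ class; from the leading-order identity
$\psi\sim -\partial_t v$ on each end and the decay profile of
Proposition~\ref{decayvprop}, one finds $\psi = O(r^{-1/2}e^{-r})$, so
$\psi\in L^2(K_\eta)$, and on $K_\eta^{t,+}$ one has
$\psi=1/\sqrt{1+|\nabla u|^2}>0$. The second and third are
$\Phi_o:=\langle N,X_\Gamma\rangle$ and
$\Phi_s:=\langle N,X_{\gamma_0}\rangle$, generated by the horizontal
dilations along $\Gamma$ and $\gamma_0$; these belong to the $(+,+,-)$
and $(+,-,+)$ classes respectively, and by Proposition~\ref{horbigraph}
are strictly positive on $K_\eta^{o,+}$ and $K_\eta^{s,+}$, vanishing
on the respective fixed-point loops. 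Neither lies in $L^2$, since
$X_\Gamma,X_{\gamma_0}$ grow at infinity.

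Applying the Allegretto--Piepenbrink principle to these positive
Jacobi fields will yield nonnegativity of the Dirichlet spectrum of
$-L_\eta$ on each of $K_\eta^{o,+}$, $K_\eta^{s,+}$, $K_\eta^{t,+}$. On
$K_\eta^{t,+}$ the value $0$ is attained simply by
$\psi|_{K_\eta^{t,+}}$ by Courant's nodal domain theorem; on the other
two, strict positivity will be upgraded by a Green's identity argument.
Namely, if $0$ lay in the Dirichlet spectrum on $K_\eta^{o,+}$ with
positive $L^2$ ground state $\phi_*$, integrating
$\Phi_o L_\eta\phi_*-\phi_*L_\eta\Phi_o = 0$ over increasingly large
subdomains and using the far-field expansion of
Proposition~\ref{asympprop} would reduce the boundary contribution to
$\int_{\mathbb{S}^1}F^+_{\Phi_o}(\theta)\,F^-_{\phi_*}(\theta)\,d\theta$;
positivity of $\Phi_o$ on $K_\eta^{o,+}$ at infinity forces
$F^+_{\Phi_o}\ge 0$ on $\mathbb{S}^1$, not identically zero because
$\Phi_o\notin L^2$, while positivity of $\phi_*$ forces
$F^-_{\phi_*}>0$, so the pairing is strictly positive, contradicting
Green's identity. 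The analogous argument handles $K_\eta^{s,+}$.
Decomposing any $L^2$ eigenfunction of $-L_\eta$ with $\mu\le 0$ by
parity, any component odd under ${\cal R}_o$ or ${\cal R}_s$ would
restrict to a Dirichlet eigenfunction with $\mu\le 0$ on the respective
half, contradicting this strict positivity; so every such eigenfunction
lies in $L^2_{(+,+,+)}\cup L^2_{(-,+,+)}$. In the $(-,+,+)$ class,
simplicity of the first Dirichlet eigenvalue on $K_\eta^{t,+}$
identifies $\mathbb{R}\psi$ as the entire zero-eigenspace and forces
every other eigenvalue there to be strictly positive.

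The remaining and most delicate step is the $(+,+,+)$ class. The
ground state $\phi_0$, being positive and hence invariant under the
full isometry group of $K_\eta$, lies in this class. Negativity
$\lambda_0<0$ will be established by a test function argument: a
compactly supported cut-off of a $(+,+,+)$ geometric Jacobi field (for
instance the one generated by varying the neck-size parameter), after
subtracting its $\phi_0$-projection, will provide $f\in L^2_{(+,+,+)}$
with $-\int_{K_\eta} f L_\eta f <0$, the dominant contribution coming
from the integrated potential $|A|^2-|N_h|^2$ concentrated near the
neck. Uniqueness of the negative eigenvalue in $(+,+,+)$ will be
obtained by a Courant-type nodal domain argument: any second
eigenfunction $\phi'$ with $\mu\le 0$ would be orthogonal to $\phi_0>0$
and thus change sign; its nodal set is $\mathbb{Z}_2^3$-invariant, and
a case analysis of the nodal decomposition in the fundamental domain,
combined with monotonicity of Dirichlet eigenvalues and the strict
positivity on $K_\eta^{o,+}$ and $K_\eta^{s,+}$ from the previous step,
produces the required contradiction. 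The hard part will be this last
uniqueness step, since nodal regions need not lie within a single half
under any one reflection and may inherit mixed Dirichlet--Neumann
boundary data from the other reflections, forcing one to use the
combined $\mathbb{Z}_2^3$ structure rather than any one factor at a
time.
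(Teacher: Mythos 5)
Your overall skeleton (decomposition into parity classes under ${\cal R}_t,{\cal R}_s,{\cal R}_o$, and the use of the three geometric Jacobi fields $\Phi_t,\Phi_s,\Phi_o$) matches the paper, but your handling of the ${\cal R}_s$- and ${\cal R}_o$-odd classes is a genuinely different route. The paper rules out \emph{negative} Dirichlet eigenvalues on the halves $K_\eta^{s,+},K_\eta^{o,+}$ by a continuity-in-$\eta$ argument down from the limiting Scherk surface, and rules out a \emph{zero} eigenvalue via the Murata-type Lemma~\ref{gclem}; you instead get nonnegativity in one stroke from the Allegretto--Piepenbrink inequality applied to the positive Jacobi fields $\Phi_o,\Phi_s$ (this is legitimate and bypasses the entire $\eta\nearrow\eta_0$ analysis), and you exclude a zero Dirichlet eigenvalue by a Green's identity pairing at infinity. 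The pairing argument can probably be made to work, but not as written: the coefficient $F^+_{\Phi_o}$ is only a distribution (a sum of Dirac masses, since $\Phi_o\sim\cosh s$ on the end), so the limit of the flux integral must be justified as a distributional pairing, and you need \emph{strict} positivity of $F^-_{\phi_*}$ at exactly those directions, which does not follow from $\phi_*>0$ alone and requires a barrier/comparison argument as in Proposition~\ref{genasymprop}. Note that Lemma~\ref{gclem} reaches the same conclusion with far less machinery: a positive $L^2$ null eigenfunction on the half would force $\Phi_o=c\,u_0\in L^2$, a contradiction.

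The genuine gap is in the totally even class. First, your argument for $\lambda_0<0$ is both more complicated than necessary and, as stated, incoherent: if you subtract the $\phi_0$-projection from a test function and still obtain a negative value of the quadratic form, you have shown $\lambda_1<0$, contradicting the very proposition you are proving; moreover the claim that a cutoff of the neck-variation Jacobi field has negative Rayleigh quotient is asserted, not proved (that field grows like $e^{s}$, so the cutoff errors are not obviously controlled by the neck contribution). The correct argument is immediate: $\Phi_t$ is an $L^2$ Jacobi field which changes sign, so $0$ is an eigenvalue whose eigenfunction is not the ground state, whence $\lambda_0<0$, and the (positive, simple) ground state $\phi_0$ is automatically totally even. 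Second, and decisively, the uniqueness step --- that no eigenvalue in $(\lambda_0,0]$ admits a totally even eigenfunction --- is the crux of the whole proposition, and you leave it as an acknowledged ``hard part'' with only a sketch of a fundamental-domain case analysis; the mixed Dirichlet--Neumann issues you foresee are real if one works one reflection at a time. The paper disposes of it directly: by Courant's nodal domain theorem applied to the full operator, such an eigenfunction would have exactly two nodal domains, while a sign-changing function on $K_\eta$ that is even under all three reflections cannot have exactly two nodal domains, since its nodal set would then have to coincide with the fixed point set of one of the reflections, contradicting total evenness. Without this (or an equivalent) argument, your proposal does not establish the statement.
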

\begin{proof}
  We can decompose the spectrum of $-L_\eta$ into the parts which are
  either even or odd with respect to each of the isometric reflections
  ${\cal R}_t$, ${\cal R}_s$ and ${\cal R}_o$. Indeed, for each such
  reflection, there is an even/odd decomposition
\[
L^2(K_\eta)  =  L^2(K_\eta)_{j-\mathrm{ev}} \oplus L^2(K_\eta)_{j-\mathrm{odd}},\ j = t, s, o. 
\]
The reduction of $-L_\eta$ to the odd part of any one of these decompositions corresponds to 
this operator acting on functions on the appropriate half $K_\eta^{j,+}$ of $K_\eta$ with Dirichlet boundary
conditions. 

Our first claim is that the restriction of $-L_\eta$ to $L^2(K_\eta)_{j-\mathrm{odd}}$ with $j = s, o$ is
strictly positive, and is nonnegative if $j = t$, with one-dimensional nullspace spanned by the Jacobi field $\Phi_t$ 
generated by vertical translations.

To prove this, note first that since $\Phi_t \in L^2(K_\eta)_{t-\mathrm{odd}}$ and $\Phi_t$ is strictly positive
on $K_\eta^{t,+}$, it must be the ground state eigenfunction for this reduction and is thus necessarily
simple, with all the other eigenvalues strictly positive.

On the other hand, we have proved above that $\Phi_s$ and $\Phi_o$ are strictly positive solutions
of this operator on the appropriate halves of $K_\eta$, vanishing on the boundary, but of course 
do not lie in $L^2$.  We shall invoke the following Lemma. 
\begin{lem}
Consider the operator $-L=-\Delta + V$ on a Riemannian manifold $M$, where $V$ is smooth and bounded. 
Assume either that $M$ is complete, or else, if it has boundary, then we consider $-L$ with Dirichlet
boundary conditions at $\del M$. Suppose that there exists an $L^2$ solution $u_0$ of
$L u_0 = 0$ such that $u_0 > 0$, at least away from $\del M$. If $v$ is any other solution
of $L v = 0$ with $v > 0$ in $M$ and $v = 0$ on $\del M$, then $v = c u_0$ for 
some constant $c$. 
\label{gclem}
\end{lem}
\begin{rem}
We can certainly relax the hypotheses on $V$. The proof below is from the paper of Murata \cite{Mu}; the 
result appears in earlier work by Agmon, and is proved by different methods in \cite[Theorem 2.8]{Sullivan}
and \cite[Ch. 4, Theorem 3.4]{Pinsky}
\end{rem}
\begin{proof} 
It is technically simpler to work on a compact manifold with smooth boundary, so let $\Omega_j$ be a sequence of nested, 
compact smoothly bounded domains which exhaust $M$, and in the case where  $\del M \neq \emptyset$, assume that 
$\overline{\Omega_j} \cap \del M = \emptyset$ for all $j$.  The last condition is imposed since it is convenient
to have that $v$ is strictly positive on the closure of each $\Omega_j$.

It is well-known that the lowest eigenvalue $\lambda_0^j$ of $-L$ with Dirichlet boundary conditions on $\Omega_j$ converges
to the lowest Dirichlet eigenvalue $\lambda_0$ of $-L$ on all of $M$ (indeed, this follows from the Rayleigh quotient characterization
of the lowest eigenvalue).  We are assuming that $\lambda_0 = 0$, so by domain monotonicity, $\lambda_0^j \searrow 0$. 

Now choose a nonnegative (and not identically vanishing) function $\psi \in \calC^\infty_0(\Omega_0)$ and define 
$-L_k = -L - \frac{1}{k}\psi$ for any $k \in \RR^+$.  Denoting the lowest eigenvalue of this operator on $\Omega_j$
by $\lambda_0^{j,k}$, then by the same Rayleigh quotient characterization, we have that
$\lambda_0^{j,k} \leq \langle -L_k u, u \rangle$ for any fixed $u \in H^1_0(\Omega_j)$ with $||u||_{L^2} = 1$. In particular, inserting
the ground state eigenfunction $\hat{u}_0^j$ for $-L$ on $\Omega_j$, we obtain
\[
\lambda_0^{j,k} \leq \lambda_0^j - \frac{1}{k} \int_{\Omega_j} \psi |\hat{u}_0^j|^2\, dV_g.
\]
In particular, fixing $k > 0$, then since the first term on the right can be made arbitrarily close to $0$
by assumption, we can choose $j$ so that $\lambda_0^{j-1,k} > 0$ and $\lambda_0^{j,k} \leq 0$. This is because
the integral in the second term on the right is bounded away from zero, which holds because $\hat{u}_0^j \leq \hat{u}_0^{j+1}$
on the support of $\psi$ (this can be proved using the maximum principle for $-L - \lambda_0^{j+1}$ to 
compare $\hat{u}_0^j$ and $\hat{u}_0^{j+1}$ on the smaller domain $\Omega_j$).  If we recall also that 
the eigenvalue $\lambda_0^{j,k}$ depends continuously (in fact, analytically) on $k$, then we can adjust
the value of $k$ slightly to a nearby value $k_j$ so that $\lambda_0^{j,k_j} = 0$.  Clearly $k_j \to \infty$. 
We have thus obtained a solution $u_0^j > 0$ of $-L_{k_j} u_0^j = 0$ on $\Omega_j$ with $u_0^j = 0$ on $\del \Omega_j$. 

Since the solution $v$ is strictly positive, we have that $\Delta \log v = V - |\nabla \log v|^2$. Now, using
that $u_0^j$ vanishes on $\del \Omega_j$, we compute that 
\begin{multline*}
\int_{\Omega_j} \left|\nabla\left(u_0^j/v\right)\right|^2 v^2 \, dV_g = \int_{\Omega_j} |\nabla u_0^j|^2 - \nabla (u_0^j)^2 \cdot
\nabla \log v  + (u_0^j)^2 |\nabla \log v|^2 \, dV_g \\
= \int_{\Omega_j} |\nabla u_0^j|^2 + (u_0^j)^2( V - |\nabla \log v|^2) + (u_0^j)^2 |\nabla \log v|^2 \, dV_g  \\
= \int_{\Omega_j} u_0^j (-\Delta + V) u_0^j \, dV_g = \frac{1}{k_j}   \int_{\Omega_j} \psi (u_0^j)^2 \, dV_g.
\end{multline*}
Normalizing so that $||u_0^j||_{L^2} = 1$, then it is straightforward to show that $u_0^j \to u_0$ on
any compact subdomain of $M$. Since the right hand side of this equation tends to $0$, so does the left, hence
in particular the integral of $|\nabla( u_0/v)|^2$ over any fixed $\Omega_{j'}$ vanishes, i.e.\ $v = c u_0$ as claimed. 
\end{proof}

This Lemma implies that it is impossible for $-L_\eta$ to have lowest
eigenvalue equal to $0$ on either of the subspaces
$L^2(K_\eta)_{j-\mathrm{odd}}$, $j = s, o$, since if this were the
case, then we could use the corresponding eigenfunction as $u_0$ in
Lemma~\ref{gclem} and let $v = \Phi_j$ to get a contradiction since
$\Phi_j \notin L^2$.

We shall justify below that when $\eta$ is very close to its maximal
value $\eta_0$, the lowest eigenvalue of $-L_\eta$ on
$L^2(K_\eta)_{j-\mathrm{odd}}$ is strictly
positive. 
Using the continuity of the ground state eigenvalue as $\eta$
decreases combined with the argument above, we see that this lowest
eigenvalue can never be negative on any one of these odd subspaces,
and the only odd $L^2$ Jacobi field is $\Phi_t$.  This proves the
claim.

We have finally reduced to studying the spectrum of $-L_\eta$ on
$L^2(K_\eta)_{\mathrm{ev}}$, i.e.\ the subspace which is even with
respect to all three reflections (we call this ``totally even'').
Because of the existence of an $L^2$ solution of $L_\eta u = 0$ which
changes signs, namely $u = \Phi_t$, we know that the bottom of the
spectrum of $-L_\eta$ is strictly negative, and we have proved above
that the corresponding eigenfunction must live in the totally even
subspace. (This is also obvious because of the simplicity of this
eigenspace and the fact that the corresponding eigenfunction is
everywhere positive.)  Thus $\lambda_0(\eta) < 0$ as claimed.

Now suppose that the next eigenvalue $\lambda_1(\eta)$ lies in the
interval $(\lambda_0(\eta), 0]$, and if $\lambda_1(\eta) = 0$, assume
that there exists a corresponding eigenfunction which is totally even.
Since this is the {\it second} eigenvalue, we know that the
corresponding eigenfunction $\phi_1(\eta)$ has exactly two nodal
domains.  However, it is straightforward to see using the symmetries
of $K_\eta$ that if $\phi$ is any function on $K_\eta$ which is
totally even and changes sign, then it cannot have exactly two nodal
domains.  Indeed, if that were the case, then the nodal line $\{\phi =
0\}$ would have to either be a connected simple closed curve or else
two arcs, and these would then necessarily be the fixed point set of
one of the three reflections. This is clearly incompatible with $\phi$
being totally even.

We are almost finished. It remains finally to prove that the lowest
eigenvalue of $-L_\eta$ on any one of the odd subspaces is nonnegative
when $\eta$ is sufficiently large.

As a first step, we first prove that $\lambda_0(\eta) \nearrow 0$ as
$\eta \nearrow \eta_0$.  Recall that in this limit, $K_\eta$ converges
(once we translate vertically by an appropriate distance) to the
limiting Scherk surface $K_{\eta_0}$. Moreover, $K_{\eta_0}$ is
strictly stable because the Jacobi field $\Phi_t$ generated by
vertical translation is strictly positive on it. 

Now suppose that $\lambda_0(\eta) \leq -c < 0$. When $\eta$ is
sufficiently close to $\eta_0$, we can construct a cutoff
$\tilde{\phi}_0(\eta)$ of the corresponding eigenfunction
$\phi_0(\eta)$ which is supported in the region $t > 0$ (we are still
assuming that $K_\eta$ is centered around $t=0$); this function lies
in $L^2$ and regarding it as a function on $K_{\eta_0}$, it is
straightforward to show that
\[
\frac{\int_{K_{\eta_0}} (-L_{\eta_0} \tilde{\phi_0} ) \tilde{\phi_0}
}{ \int_{K_{\eta_0}} |\tilde{\phi_0}|^2} \leq -c/2 < 0.
\]
This contradicts the strict stability of $K_{\eta_0}$, and hence
proves that $\lambda_0(\eta) \nearrow 0$.

Now suppose that there is some sequence $\eta^\ell \nearrow \eta_0$
and a corresponding sequence of eigenvalues $\lambda^\ell \in
(\lambda_0(\eta^\ell), 0)$ and eigenfunctions $\phi^\ell\in
L^2(K_\eta)_{j-\mathrm{odd}}$, $j = s, o$.
We know that $\lambda^\ell \nearrow 0$.  Suppose that the maximum of
$|\phi^\ell|$ is attained at some point $p^\ell \in
K_{\eta^\ell}$. Normalize by setting $\hat{\phi}^\ell = \phi^\ell/\sup
|\phi^\ell|$ and take the limit as $\ell \to \infty$. Depending on the
limiting location of $p^\ell$, we obtain a bounded solution of the
limiting equation on the pointed Gromov-Hausdorff limit of the
sequence $(K_{\eta^\ell}, p^\ell)$.  There are, up to isometries, only
two possible such limits: either the limiting Scherk surface
$K_{\eta_0}$ or else a vertical plane $P = \gamma \times \RR$.
In the latter case, the limiting function $\hat{\phi}$ satisfies $L_P
\hat{\phi} = 0$. However, $L_P = \Delta_P - 1$ and it follows by
an easy argument using the Fourier transform on $P$ that there are no
bounded solutions of $L_P \hat{\phi} = 0$ on all of $P$, so this case cannot occur. 
Therefore, we have obtained a function $\hat{\phi}$ on $K_{\eta_0}$
which is a solution of the Jacobi equation there and which is
bounded. We now invoke Theorem 2.1 in \cite{MPR}, which is a result
very similar to Lemma~\ref{gclem}, but instead of assuming that $v$ is
positive, we assume instead that $v$ is bounded, and then conclude
that $v = c u_0$ where $u_0$ is the positive $L^2$ solution. The proof
proceeds by a somewhat more intricate cutoff argument than the one
above. In any case, this proves that $\hat{\phi}$ must equal the
unique {\it positive} $L^2$ Jacobi field on $K_{\eta_0}$, but this is
impossible because of the oddness of $\phi^\ell$ with respect to
either ${\cal R}_s$ or ${\cal R}_o$.

This completes the proof of the main Proposition.
\end{proof}

\section{Families of nearly minimal surfaces}
We now describe a collection of families of `nearly minimal' surfaces, exhibiting a wide variety of topological types.  
In the next section we prove that these can be deformed to actual minimal surfaces, at least when certain parameters 
in the family are sufficiently large. The geometry of each such configuration is encoded by a finite network of geodesic 
lines and arcs in $\HH^2$. Each complete geodesic $\gamma$ in this network corresponds to a vertical plane 
$P = P_\gamma = \gamma \times \RR$. The geodesic segments connecting these geodesic lines 
correspond to catenoidal necks connecting the associated vertical planes. The approximate minimal surfaces themselves
are constructed by gluing together horizontal catenoids. Thus we take advantage of the existence of these components, 
the existence of which already incorporates some of the nonlinearities of the problem; this is in lieu of working with the 
more primitive component set comprised of vertical planes and catenoidal necks. The parameter which measures
the `strength' of the interaction between these pieces is the distance between the finite geodesic segments. Once 
this distance is sufficiently large, we expect that the approximately minimal surface can be perturbed to be exactly minimal.  
We prove this here under one extra hypothesis, that the catenoidal necksizes remain bounded away from zero. The more
general case will be handled in a subsequent paper. The joint requirement that the distances between geodesic `connector' arcs 
be large and that the necksizes are bounded away from zero imposes restrictions which we describe below. 

We now describe all of this more carefully.  

\subsubsection*{Geodesic networks} An admissible geodesic network $\calF$ (see Fig. \ref{fig:two}) consists of a finite
set of (complete) geodesic lines $\Gamma = \{\gamma_\alpha\}_{\alpha \in A}$ and geodesic segments
$\calT = \{ \tau_{\alpha \beta}\}_{ (\alpha, \beta) \in A'}$ connecting various pairs of elements
in $\Gamma$. Here $A$ is some finite index set and $A'$ is a subset of $A \times A \setminus \mbox{diag}$
which indexes all `contiguous' geodesics, $\gamma_\alpha$ and $\gamma_\beta$
which are joined by some $\tau_{\alpha \beta}$. We now make various assumptions on these data
and set notation: 
\begin{itemize}
\item[i)] If $\alpha \neq \beta$, then $\mbox{dist}\,(\gamma_\alpha , \gamma_\beta) : = \eta_{\alpha \beta}
\in (0,\eta_0)$, where $\eta_0$ is the maximal separation between vertical planes which support a 
horizontal catenoid. 
\item[ii)] The segment $\tau_{\alpha \beta}$ realizes the distance $\eta_{\alpha \beta}$ between 
$\gamma_\alpha$ and $\gamma_\beta$, and hence is perpendicular to both these geodesic lines.
\item[iii)] Set $p_\alpha(\beta) = \tau_{\alpha \beta} \cap \gamma_\alpha$ and $p_\beta(\alpha) = 
\tau_{\alpha \beta} \cap \gamma_\beta$, and then define 
\[
D_\alpha = \min_{(\alpha \beta), (\alpha, \beta') \in A'} \{ \mbox{dist}(p_\alpha(\beta), p_{\alpha}(\beta'))\},
\ \ \mbox{and}\ \ D = \min_{\alpha} D_\alpha.
\]
This number $D$ is called the minimal neck separation of the configuration $\calF$. 
\item[iv)] We also write $\eta := \sup \eta_{\alpha \beta}$, and call it the maximal neck parameter.
\end{itemize}
\begin{figure}[htbp]
    \begin{center}
        \includegraphics[width=.58\textwidth]{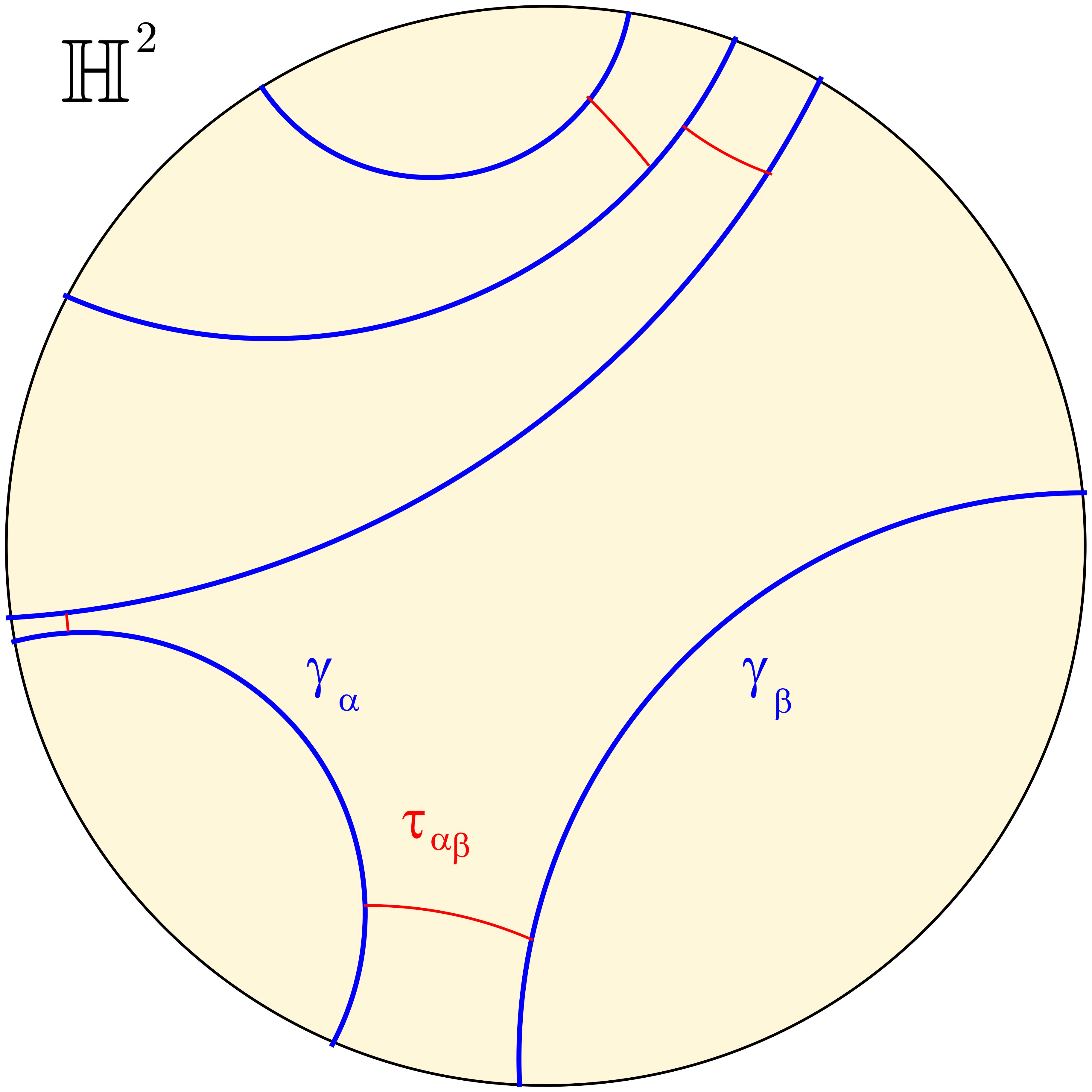}
     \end{center}
   \caption{The geodesic network $\cal F$.} \label{fig:two}
\end{figure}

We shall be considering sequences of geodesic networks $\calF_j$ for
which the minimal neck separation $D_j$ tends to infinity. Such
sequences have two distinct types of behaviour: either all of the
$(\eta_{\alpha \beta})_j \geq c > 0$, or else at least some of the
$(\eta_{\alpha \beta})_j \to 0$. The main analytic construction below
turns out to be fairly straightforward for the first type, but
unfortunately the simplest geometries (a relatively small number of
ends for a given genus) can only happen in the second setting.
\begin{prop}
  Let $\calF_j$ be a sequence of configurations with $D_j \to \infty$,
  and suppose that no $\calF_j$ is contractible.  If the cardinalities
  of the index sets $A(\calF_j)$ and $A'(\calF_j)$ (i.e.\ the number
  of geodesics and geodesic segments) remain bounded independently of
  $j$, then at least some of the necksizes $(\eta_{\alpha \beta})_j$
  must tend to $0$.
  \label{prop:necksizes}
\end{prop}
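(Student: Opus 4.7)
The plan is to view each $\calF_j$ as a finite combinatorial graph $G_j$ whose vertices are the geodesic lines and whose edges are the connectors. Since $|A|$ and $|A'|$ are bounded along the sequence, I may pass to a subsequence with a fixed combinatorial type $G_j = G$. The hypothesis that $\calF_j$ is not contractible means $G$ is not a tree, so $G$ contains at least one cycle. Because any two disjoint non-asymptotic geodesics in $\HH^2$ admit a unique common perpendicular, $G$ has no double edges and every cycle has length $k \geq 3$. Fix such a cycle $\gamma_{\alpha_1},\gamma_{\alpha_2},\ldots,\gamma_{\alpha_k},\gamma_{\alpha_1}$.

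This cycle traces out a closed right-angled $2k$-gon in $\HH^2$ whose sides alternate between the perpendicular connectors $\tau_{\alpha_i\alpha_{i+1}}$, of length $\eta_{\alpha_i\alpha_{i+1}} \leq \eta_0$, and segments of the $\gamma_{\alpha_i}$ joining consecutive attachment points $p_{\alpha_i}(\alpha_{i-1})$ and $p_{\alpha_i}(\alpha_{i+1})$, each of length at least $D_j$ by definition of $D_j$. Assume for contradiction that all $\eta_{\alpha_i\alpha_{i+1}} \geq c > 0$. When $k = 3$ the polygon is a right-angled hexagon and the classical trigonometric identity
\begin{equation*}
\cosh(a_i)\sinh(b_j)\sinh(b_k) = \cosh(b_i) + \cosh(b_j)\cosh(b_k),\qquad \{i,j,k\}=\{1,2,3\},
\end{equation*}
with $b_i$ opposite to $a_i$, immediately produces the contradiction: for any $i$, the left-hand side is bounded below by $\cosh(D_j)\sinh^2(c)$ and diverges, while the right-hand side depends only on $c$ and $\eta_0$.

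For a cycle of length $k \geq 4$, I would run an induction on $\ell = 3, \ldots, k$ to show that $b_{1\ell} := \mathrm{dist}_{\HH^2}(\gamma_{\alpha_1}, \gamma_{\alpha_\ell})$ tends to infinity. The base case $\ell = 3$ is the hexagon identity applied to $\gamma_{\alpha_1}, \gamma_{\alpha_2}, \gamma_{\alpha_3}$, solved for $\cosh(b_{13})$. The inductive step applies the same identity to $\gamma_{\alpha_1}, \gamma_{\alpha_{\ell-1}}, \gamma_{\alpha_\ell}$; the key auxiliary claim, established by a preliminary application of the identity to $\gamma_{\alpha_1}, \gamma_{\alpha_{\ell-2}}, \gamma_{\alpha_{\ell-1}}$ combined with the inductive hypothesis $b_{1(\ell-1)} \to \infty$, is that the foot of the perpendicular from $\gamma_{\alpha_{\ell-1}}$ onto $\gamma_{\alpha_1}$ lies at bounded distance from the attachment point $p_{\alpha_{\ell-1}}(\alpha_{\ell-2})$. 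Consequently the relevant segment on $\gamma_{\alpha_{\ell-1}}$ has length at least $D_j - O(1)$, and the dominant term in the identity forces $\cosh(b_{1\ell}) \to \infty$. Taking $\ell = k$ yields $b_{1k} \to \infty$, contradicting $b_{1k} = \eta_{\alpha_k\alpha_1} \leq \eta_0$.

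The main obstacle is the multi-scale asymptotic analysis in the inductive step: once several of the pairwise distances are already divergent while others remain bounded, one must carefully verify that the diverging $\cosh(b_{1\ell})$ term in the hexagon identity is not cancelled by the other summands. This requires nothing beyond elementary asymptotics of $\cosh$ and $\sinh$, but it is the only non-trivial piece of bookkeeping in the argument.
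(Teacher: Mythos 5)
Your argument is essentially the paper's: extract a cycle from the network, observe that it bounds a right-angled hyperbolic polygon whose sides alternate between segments of length at least $D_j$ and connector segments of length in $[c,\eta_0]$, and derive a contradiction as $D_j\to\infty$. The only difference is that the paper cites as a ``standard fact in hyperbolic geometry'' that such a polygon must have all side lengths uniformly controlled, whereas you supply (a sketch of) the proof of that fact via the right-angled hexagon identities, which is a legitimate way to fill in the detail the paper leaves to the reader.
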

\begin{proof}
  By hypothesis, for each $j$ the configuration $\calF_j$ contains a
  cycle $c_j$. Referring to the geometry of each $\calF_j$, it is
  clear that each side of every $c_j$ is a geodesic segment, and
  moreover, each $c_j$ is a convex hyperbolic polygon whose sides meet
  at right angles.  By hypothesis then we have a sequence of such
  polygons where the number of sides remains bounded, so we may as
  well assume that each $c_j$ is a $k$-gon for some fixed $k$. Suppose
  that all $(\eta_{\alpha\beta})_j \geq c > 0$. Then by hypothesis,
  the successive adjacent sides of $c_j$ are geodesic segments of length at
  least $D_j$ and geodesic segments of length lying in the interval
  $[c, \eta_0]$. However, it is a standard fact in hyperbolic geometry
  that a geodesic polygon with every other side lying in such an
  interval must have all sidelengths uniformly controlled, which is a
  contradiction.
\end{proof}
In summary, for any sequence of configurations with fixed nontrivial
topology and a fixed number of geodesic lines , at least some of the
$\eta_{\alpha \beta}$ must converge to $0$.

\subsubsection*{From geodesic networks to nearly minimal surfaces}
To each geodesic network $\calF$ satisfying the properties above we
now associate an approximately minimal surface $\Sigma$.  The idea is
straightforward: each geodesic line $\gamma_\alpha$ is replaced by the
vertical plane $P_\alpha = \gamma_\alpha \times \RR$, and each
geodesic segment $\tau_{\alpha \beta}$ corresponds to a catenoidal
neck connecting $P_\alpha$ and $P_\beta$ at the points
$p_\alpha(\beta)$ and $p_\beta(\alpha)$.  The resulting surface is
denoted $\Sigma_{\calF}$.

The arguments used below to deform $\Sigma_{\calF}$ to an actual
minimal surface are perturbative, so we must construct sequences of
nearly minimal surfaces for which the error, which is a quantitative
measure of how far $\Sigma_{\calF}$ is from being minimal, tends to
zero.  To make the error term small, it is necessary to consider a
sequence of networks $\calF_j$ where the minimum neck separation $D_j$
tends to infinity. As proved above, if the necksizes stay bounded away
from zero, the number of component pieces must grow with $j$. Because
the proof is much cleaner in this case, we assume that $(\eta_{\alpha
  \beta})_j \geq c > 0$ in all the rest of this paper.  The more
general case can be treated using techniques similar to those in
\cite{MazPacPol}, but we shall address this in a separate paper.

The surface $\Sigma_{\calF}$ will be constructed by assigning to each
$\tau_{\alpha \beta}$ a vertical strip in the catenoid
$K_{\eta_{\alpha \beta}}$ (see below) which contains a very wide neighbourhood
around the neck region.  Using that none of the necksizes tend to
zero, we will prove that the Jacobi operator has a uniformly bounded
inverse, acting between certain weighted H\"older spaces.

\medskip For each $\calF$, we now show how to construct
$\Sigma_{\calF}$. Fix a line $\gamma_\alpha$ in $\calF$, and enumerate
the points $p_{\alpha}(\beta)$ along this line consecutively as
$p_{\alpha,1}, \ldots, p_{\alpha,N}$. (The number of such points, $N =
N_\alpha$, depends on $\alpha$, but for the sake of simplicity, the
notation does not record this.)  Let $q_{\alpha,j}$ be the midpoint of
the geodesic segment from $p_{\alpha,j}$ to $p_{\alpha,j+1}$, $j = 1,
\ldots, N-1$ and denote the length of such a segment by $d_{\alpha,
  j}$. Hence
\[
\mbox{dist}(p_{\alpha,j}, q_{\alpha,j}) = \frac12 d_{\alpha,j},\quad
\mbox{dist}(p_{\alpha,j}, q_{\alpha,j-1}) = \frac12 d_{\alpha,j-1}.
\]
Note that each $d_{\alpha, j} \geq D_\alpha > D$.  Finally, let
$S_{\alpha,j}$ denote the vertical strip in $P_\alpha$ bounded by the
two lines $q_{\alpha,j} \times \RR$ and $q_{\alpha,j+1} \times
\RR$. For the extreme values $j = 0$ and $N$, let $S_{\alpha,j}$ be
the half-plane in $P_\alpha$ bounded by $q_{\alpha,1}$ (on the right)
and $q_{\alpha,N-1}$ (on the left), respectively.

Now, consider a geodesic segment $\tau_{\alpha \beta} \in \calF$, and
write its two endpoints as $p_{\alpha,j}$ and $p_{\beta,k}$.  Let
$K_{\alpha \beta}$ be the horizontal catenoid with vertical ends
$P_\alpha \sqcup P_\beta$ and parameter $\eta_{\alpha \beta}$. Writing
this catenoid as a horizontal normal graph over the relevant portions
of the planes $P_\alpha$ and $P_\beta$ (i.e.\ away from the neck
regions), we let $K^c_{\alpha \beta}$ denote the portion of the
catenoid which includes the neck region and which lies over the strips
$S_{\alpha,j}$ and $S_{\beta,k}$ (this is possible when $D$ is large
enough).

This ensemble is not quite in final form since the edges of the
different truncated horizontal catenoids do not quite match up. Write
the corresponding portion of $K_{\alpha \beta}$ over the strip
$S_{\alpha, j}$ as a normal graph with graph function $v_{\alpha, j}$.
Choose a smooth cutoff function $\chi_{\alpha, j} \geq 0$ which equals
$1$ in the interior of $S_{\alpha,j}$ at all points which are a
distance at least $2$ from the boundaries, and which vanishes at all
points which are distance at most $1$ from these boundaries, and such
that $|\nabla \chi_{\alpha, j}| + |\nabla^2 \chi_{\alpha, j}| \leq 2$
(again this is possible for $D$ is large enough). We then let
$K_{\alpha \beta}^0$ be the slightly modified surface which agrees
with $K^c_{\alpha \beta}$ near the neck region and is the graph of
$\chi_{\alpha, j} v_{\alpha, j}$ over the rest of $S_{\alpha, j}$. Of
course, this is no longer quite minimal where the modifications have
been made.

\begin{figure}[htbp]
    \begin{center}
        \includegraphics[width=\textwidth]{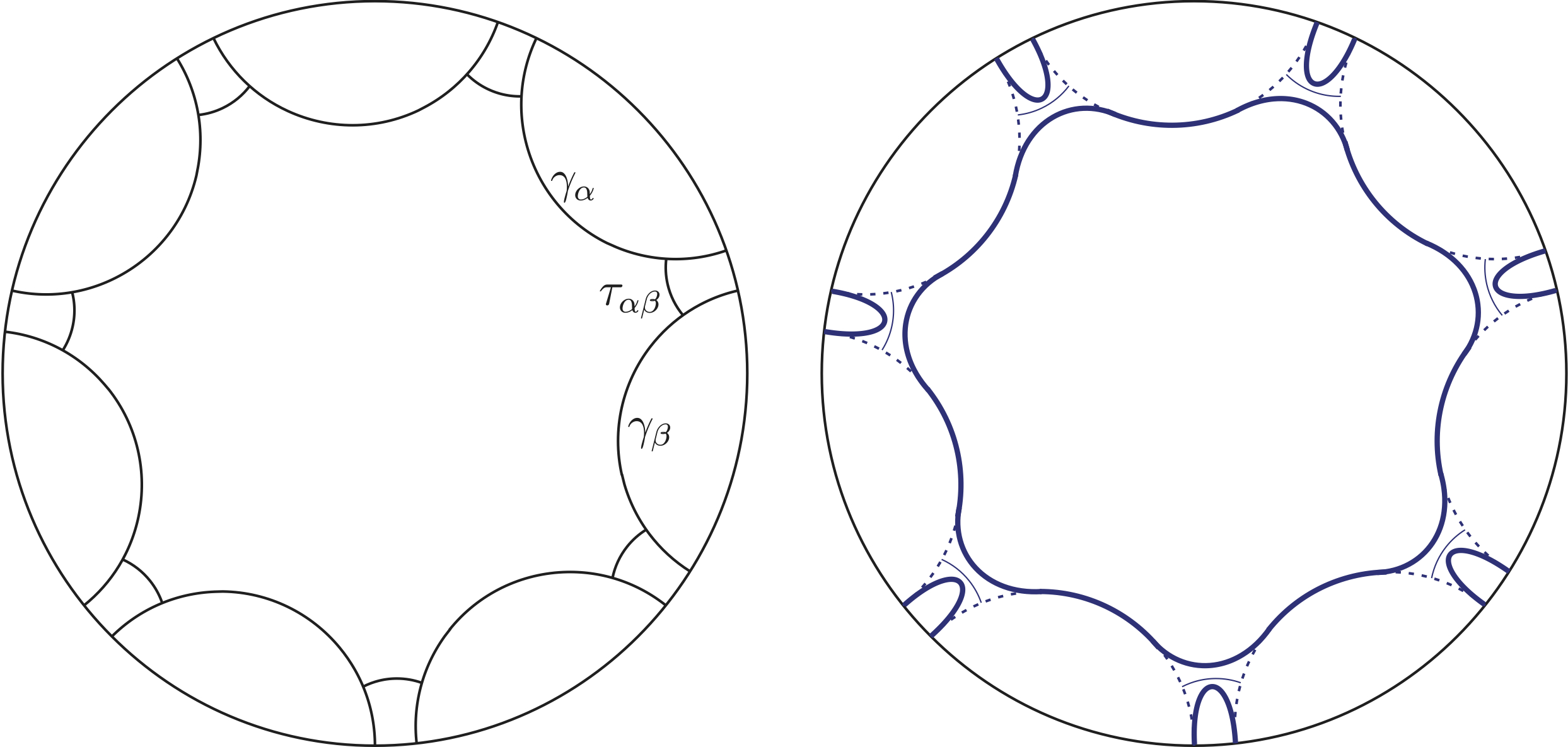}
     \end{center}
     \caption{An example of $\Sigma_{\cal F}$ and the corresponding (approximately) minimal surface}
     \label{fig:genus1}
\end{figure}

Our final definition of the approximately minimal surface
$\Sigma_{\calF}$ in this case, where all neck parameters are bounded
below by $c$, is
\begin{equation}
  \Sigma_\calF = \bigsqcup_{(\alpha \beta) \in A'} K_{\alpha \beta}^0.
  \label{defsigma}
\end{equation}

\begin{prop} \label{prop:H}
  Let $\calF$ be a geodesic network which satisfies the properties i), ii) and iii), and let $\Sigma = \Sigma_{\calF}$ 
be the associated   surface in $\HH^2 \times \RR$ just constructed.  Then $\Sigma$ is smooth and has $H \equiv 0$ 
except in the vertical strips $Q_{\alpha,j}$ of width $2$ around the lines $q_{\alpha,j} \times   \RR$. 
In the vertical strip $Q_{\alpha,j}$,
\[
\sup_{B_t} \|H\|_{0,\mu; B_t} \leq C r^{-\frac12} e^{-r};
\]
here $r = \min\{ \sqrt{ (d_{\alpha,j})^2 + t^2}, \sqrt{
  (d_{\alpha,j+1})^2 + t^2}\}$ and $B_t$ is the square of width $2$
and height $2$ centered at $(q_{\alpha,j},t) \in Q_{\alpha,j}$. The
constant $C$ is independent of all parameters in the construction
provided $D = \min D_\alpha$ is sufficiently large.
\end{prop}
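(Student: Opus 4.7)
The plan is as follows. First, I observe that by construction $\Sigma_{\calF}$ is piecewise the normal graph over the vertical plane $P_\alpha$ of a function of the form $u = \chi_{\alpha,j} v_{\alpha,j}$, where $v_{\alpha,j}$ is the graph function of the relevant horizontal catenoid end over $P_\alpha$. Where $\chi_{\alpha,j}\equiv 1$ (at distance $\geq 2$ from the boundary of the strip $S_{\alpha,j}$) the surface agrees with an exact horizontal catenoid and hence has $H\equiv 0$, while where $\chi_{\alpha,j}\equiv 0$ (at distance $\leq 1$) the graph is just the vertical plane $P_\alpha$ itself, which also satisfies $H\equiv 0$. Consequently, the only region on which the mean curvature can fail to vanish is the transition annulus $\{1\leq \mbox{dist}(\cdot, q_{\alpha,j}\times\RR)\leq 2\}$, which is contained in $Q_{\alpha,j}$.

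To quantify the size of $H$ in the transition region, I use that the minimal surface equation for horizontal graphs over $P_\alpha$ is a quasilinear elliptic equation $\calN(u,\nabla u,\nabla^2 u) = 0$ whose linearization at $u=0$ is $L_P = \Delta_{\RR^2} - 1$, and which satisfies
\[
\calN(u) = L_P u + Q(u,\nabla u, \nabla^2 u),
\]
with $Q$ vanishing quadratically at the origin (exactly as in \eqref{Taylor}). Since the true catenoid is minimal, $\calN(v_{\alpha,j})=0$, so $L_P v_{\alpha,j} = -Q(v_{\alpha,j})$. Writing $\chi = \chi_{\alpha,j}$ and $v = v_{\alpha,j}$ for brevity and applying the product rule to $L_P(\chi v)$ gives
\[
\calN(\chi v) \; = \; L_P(\chi v) + Q(\chi v) \; = \; (\Delta \chi) v + 2\nabla\chi\cdot\nabla v + Q(\chi v) - \chi Q(v).
\]

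Now I invoke the decay from \pref{decayvprop}, which says $v(r,\theta) = A_\eta(\theta)r^{-\frac12}e^{-r} + \calO(r^{-\frac32}e^{-r})$. Standard $\calC^{2,\mu}$ Schauder estimates for the minimal surface equation on unit balls $B_1(p)\subset P_\alpha$ (combined with this asymptotic expansion and its differentiability) yield
\[
\|v\|_{2,\mu;B_1(p)} \leq C\, r(p)^{-\frac12} e^{-r(p)} \quad\text{whenever } r(p)\geq 1,
\]
where $r$ is Euclidean distance in $(s,t)$ to the relevant neck center on $P_\alpha$. The cutoff satisfies $|\nabla\chi|+|\Delta\chi|\leq 2$ uniformly. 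Therefore the linear terms $(\Delta\chi)v+2\nabla\chi\cdot\nabla v$ are bounded in $\|\cdot\|_{0,\mu;B_t}$ by $Cr^{-\frac12}e^{-r}$, while the quadratic remainders $Q(\chi v)-\chi Q(v)$ are bounded by $Cr^{-1}e^{-2r}$, which is strictly dominated by the linear contribution. Taking $r$ to be the distance from $(q_{\alpha,j},t)$ to the nearer of the two adjacent neck centers (which is controlled by the quantity in the statement, up to a universal constant absorbed into $C$) produces the claimed bound.

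I do not expect a substantive obstacle here: the argument is a standard error computation for a cutoff-based gluing, and the only real ingredients are the product rule, the identity $L_Pv = -Q(v)$, and the precise exponential asymptotics from \pref{decayvprop}. The hypothesis that the necksizes $\eta_{\alpha\beta}$ stay bounded away from $0$ is used implicitly to guarantee that the constants $A_\eta(\theta)$ and the implicit constants in the Schauder estimates are uniform in $\calF$, and the assumption $D$ large ensures that the transition strips lie deep in the asymptotic regime $r\geq 1$ where the decay formula applies.
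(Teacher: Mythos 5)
Your argument is correct and follows essentially the same route as the paper, whose entire proof is the one-line remark that the decay of $H$ follows from the decay estimate \eqref{genasym} for the graph functions; your computation $\calN(\chi v)=(\Delta\chi)v+2\nabla\chi\cdot\nabla v+Q(\chi v)-\chi Q(v)$, together with Schauder estimates for $v$, is exactly the detail the paper leaves implicit. One caveat: the distance from $(q_{\alpha,j},t)$ to the nearest neck point is $\sqrt{(d_{\alpha,j}/2)^2+t^2}$, so your bound does not literally yield the stated $r=\min\{\sqrt{d_{\alpha,j}^2+t^2},\sqrt{d_{\alpha,j+1}^2+t^2}\}$ ``up to a constant absorbed into $C$'' (the exponentials differ by $e^{-d_{\alpha,j}/2}$); this is a normalization slip in the paper's statement rather than in your argument, and the bound you derive (decay in the distance to the neck, of order $e^{-D/2}$ along the gluing lines) is the one actually invoked later in the proof of Proposition~\ref{invertible}.
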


The only point which needs to be checked is the decay of the local H\"older norm of the mean curvature.  
However, this follows directly from the corresponding estimate for the decay of the horizontal graph
functions $v_{\alpha,j}$, see~\eqref{genasym}.

\begin{prop}
\label{prop:index0} Let ${\cal F}$ be a geodesic network satisfying i), ii) and iii), and write $\Sigma=\Sigma_{\cal F}$. 
If $D$ is sufficiently large, then the Jacobi operator $L_\Sigma$ is non-degenerate when restricted 
to functions which are even respect to ${\cal R}_t.$
\end{prop}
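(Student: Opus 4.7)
The plan is a Cheeger--Gromov blow-up argument, modelled on the final step in the proof of Proposition~\ref{specnondeg}. Suppose for contradiction that for some sequence of admissible geodesic networks $\calF_j$ with minimum neck separations $D_j\to\infty$, each $\Sigma_j=\Sigma_{\calF_j}$ carries a nontrivial ${\cal R}_t$-even decaying Jacobi field $\psi_j$. Normalize so that $\sup_{\Sigma_j}|\psi_j|=1$, attained at some point $p_j\in\Sigma_j$. The goal is to extract a limiting Jacobi field on a limiting surface, bounded by $1$ and of norm $1$ at the basepoint, and to rule it out.

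First I would classify the possible pointed Cheeger--Gromov limits of $(\Sigma_j,p_j)$. Because the necksizes satisfy $\eta_{\alpha\beta}\ge c>0$ while $D_j\to\infty$, the gluing construction shows there are, up to isometry, only two possibilities: either a complete horizontal catenoid $K_\eta$ with $\eta\in[c,\eta_0)$, occurring when $p_j$ stays within bounded distance of some neck; or a single vertical plane $P=\gamma\times\RR$, when the $p_j$ escape every neck of $\Sigma_j$. Standard interior elliptic regularity for the Jacobi equation, together with the uniform bound $|\psi_j|\le 1$, then yields a further subsequence which converges locally in $\calC^2$ to a bounded, ${\cal R}_t$-even solution $\psi$ of the Jacobi equation on the limit surface, with $|\psi(p_\infty)|=1$.

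Next I would rule out each of the two limiting possibilities. On the vertical plane, the Jacobi operator is $L_P=\Delta_{\RR^2}-1$, and a strong maximum principle argument (or equivalently a Fourier analysis of tempered solutions) shows that it admits no nontrivial bounded solution, giving $\psi\equiv 0$. On a horizontal catenoid, the far-field expansion of Proposition~\ref{asympprop} combined with the boundedness of $\psi$ forces the exponentially growing coefficients $F^+_\alpha$ to vanish on each asymptotic end, so $\psi$ is in fact an ${\cal R}_t$-even Jacobi field which decays at infinity; Theorem~\ref{nondeghc} then gives $\psi\equiv 0$. Either way this contradicts $|\psi(p_\infty)|=1$, and the proposition follows.

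The step I expect to be the main obstacle is the Cheeger--Gromov extraction itself, specifically in the transition strips $Q_{\alpha,j}$ where the cutoff $\chi_{\alpha,j}$ has been introduced and $\Sigma_j$ is no longer exactly minimal. To handle this I would use that these strips lie at distance $\sim D_j/2$ from every neck and from every ideal boundary component of $\Sigma_j$, so by Proposition~\ref{decayvprop} the horizontal graph function $v_{\alpha,j}$ defining the truncated horizontal catenoid there is bounded by $Cr^{-1/2}e^{-r}$, with $r\to\infty$ along the sequence. Consequently $\Sigma_j$ is $\calC^\infty$-close to the flat vertical plane $P_\alpha$ on any bounded ball around $p_j$ in the transition regime, so the surface, its second fundamental form, and its Jacobi operator all converge smoothly to those of $P_\alpha$. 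This is precisely the point where the hypothesis that the necksizes are bounded below is used; once this convergence of geometries is in place the spectral obstructions recalled in the previous paragraph apply verbatim.
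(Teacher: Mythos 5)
Your proposal is correct and follows essentially the same route as the paper's proof: a contradiction argument normalizing the even Jacobi field at its maximum point, passing to a pointed limit which is either a vertical plane or a horizontal catenoid with necksize bounded below, and ruling these out via the absence of bounded Jacobi fields on $P$ and, on $K_\eta$, via the far-field expansion forcing the bounded limit into $L^2$ so that horizontal nondegeneracy (Proposition~\ref{specnondeg}) applies. Your extra care about smooth convergence in the transition strips where $\Sigma_j$ is only approximately minimal is a detail the paper leaves implicit, but it does not change the argument.
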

\begin{proof}
We proceed by contradiction. Assume there exists a sequence of networks $\calF_j$ with $D_j=D(\calF_j) 
\nearrow +\infty$ such that each $\Sigma_j=\Sigma({\cal F}_j)$ admits a nonvanishing, even, $L^2$ Jacobi field $\phi_j$.
Let $p_j \in \Sigma_j$ be a point where $|\phi_j|$ attains its maximum, and set $a_j:= |\phi_j(p_j)|$. 
If $T_j$ is an isometry of $\HH^2 \times \R$ with $T_j(p_j) = (0,0)$, and $S_j=T_j(\Sigma_j)$, then 
$\psi_j=\left(\frac{1}{a_j} \phi_j \right) \circ T_j^{-1}$ is a Jacobi field on $S_j$ with $\sup|\psi_j| = 1$
attained at $(0,0)$. Passing to a subsequence, $S_j$ converges to a surface $S_\infty$, which is clearly
either a vertical plane or a horizontal catenoid $K_\eta$, for some $\eta \in (0,\eta_0)$, and $\psi_j$ converges 
to a nontrivial, bounded Jacobi field $\psi_\infty$ on $S_\infty$. However, it is clear that no such Jacobi
field exists on a vertical plane. Furthermore, the expansion \eqref{farfield} holds also on ends of $K_\eta$
and shows that a bounded Jacobi field must, in fact, lie in $L^2$ (this could also be proved more directly
using a variant of the proof of Proposition \ref{genasymprop}.  However, $\psi_\infty$ is the limit of functions 
invariant with respect to $\calR_t$, hence also has this property. This contradicts the vertical nondegeneracy 
of horizontal catenoids as proved in Proposition \ref{specnondeg}, and completes the proof. 
\end{proof}

\subsubsection*{Examples} 
It is possible to construct nearly minimal surfaces as sketched above,
assuming that all necksizes $\eta_{\alpha \beta}$ are bounded away
from $0$, with arbitrary genus, though possibly a large number of
ends. Since each plane $P_\alpha$ is diffeomorphic to a once-punctured
sphere, we see that $\Sigma_{\calF}$ is a connected sum of such
spheres, with one connection corresponding to each geodesic segment
$\tau_{\alpha\beta}$.

To carry out the perturbation analysis, we must consider networks $\calF$ with $D(\calF)$ sufficiently
large. As already explained, this imposes various restrictions.  For example, to find a
sequence of networks $\calF_j$ with precisely one loop, and with
$D(\calF_j) \to \infty$ and all $\eta_{\alpha \beta} \geq c > 0$,
standard formulas from hyperbolic trigonometry show that the number of
edges must grow with $j$.  One construction is to take a hyperideal
polygon in $\HH^2$ which is invariant with respect to rotation by
$2\pi/j$, by which we mean a collection of $j$ disjoint geodesics with
a cyclic ordering and such that the minimal distance between any pair
of adjacent geodesics is some fixed number $\eta$.  If $\eta$ does not
tend to zero, then the only way to have the minimal neck separation
tend to infinity is if $j \to \infty$ (see
Proposition~\ref{prop:necksizes}).  By contrast, we can find sequences
of such networks with $j=3$, for example, if we let $\eta \to 0$.

\section{Perturbation of $\Sigma_\calF$ to a minimal surface}
We now complete the perturbation analysis to show how to pass from the nearly minimal
surfaces $\Sigma_\calF$ to actual minimal surfaces in $\HH^2 \times \RR$ 
when $\calF$ is a geodesic network with minimal neck separation $D$ sufficiently
large, and with a uniform lower bound $\eta_{\alpha \beta} \geq c > 0$ on the neck parameters.

Fixing $\calF$, let $\Sigma = \Sigma_\calF$ and let $\nu$ be the unit normal on $\Sigma$ with
respect to a fixed orientation. For any $u \in \calC^{2,\mu}(\Sigma)$, consider the normal graph
over $\Sigma$ with graph function $u$, 
\[
\Sigma(u) = \{ \exp_{q} (u(q) \nu(q)),\ q \in \Sigma\}.
\]
Assuming that all $\eta_{\alpha \beta} \geq c > 0$, then there exists $C = C(c) > 0$ such that 
if $||u||_{2,\mu} < C$, then $\Sigma(u)$ is embedded. 

The surface $\Sigma(u)$ is minimal if and only if $u$ satisfies a
certain quasilinear elliptic partial differential equation, $\calN(u)
= 0$, which calculates the mean curvature of $\Sigma(u)$. (A similar
argument was considered in the proof of Proposition~\ref{genasymprop}
considering the vertical plane $P$ instead of $\Sigma$.) We do not
need to know much about $\calN$ except the following. If we write
$\calN(u) = \calN(0) + \left. D \calN\right|_0(u) + Q(u)$, then
\begin{itemize}
\item[i)] $\calN(0) = H_\Sigma$; 
\item[ii)] the linearization at $u=0$ is the Jacobi operator of $\Sigma$, 
\[
\left. D \calN\right|_0 = L_\Sigma = \Delta_\Sigma + |A_\Sigma|^2 + \Ric(\nu,\nu);
\]
\item[iii)] if $\epsilon$ is sufficiently small and $||u||_{2,\mu} < \epsilon$,  then
\[
||\calN(u)||_{0,\mu} \leq C \epsilon\ \ \mbox{and}\ \ 
||Q(u)||_{0,\mu} \leq C \epsilon^2.
\]
\end{itemize}

The equation $\calN(u) = 0$ is equivalent to
\begin{equation}
  L_\Sigma u = - H_\Sigma - Q(u).
\label{eq1}
\end{equation}
The strategy is now a standard one: we shall define certain weighted
H\"older spaces $X$ and $Y$, and first prove that $L_\Sigma: X \to Y$
is Fredholm.  A more careful analysis will show that, at least when
the minimal neck separation $D$ is sufficiently large, this map is
invertible and moreover its inverse $G_\Sigma: Y \to X$ has norm which
is uniformly bounded by a constant depending only on the lower bounds
$D$ for the minimal neck separation and $c$ for the maximal neck
parameter (see Proposition~\ref{invertible} and Corollary~\ref{cor:G}).
Given these facts, we then rewrite \eqref{eq1} as
\begin{equation}
u = - G_\Sigma ( H_\Sigma + Q(u)),
\label{eq2}
\end{equation}
and solve this equation by a standard contraction mapping argument. 

Somewhat remarkably, in this instance, this argument works almost
exactly as stated. The only subtlety is that we must restrict to
functions which are even with respect to the vertical reflection $t
\mapsto -t$, since this subspace avoids the exponentially decaying
element of the nullspace of the Jacobi operator on $\Sigma$.

The basic function spaces are standard H\"older spaces
$\calC^{k,\mu}(\Sigma)$ defined using the seminorm
\[
[ u ]_{0,\mu} = \sup_{z \neq z' \atop \mbox{dist}\,(z,z') \leq 1}
\frac{ |u(z) - u(z')|}{ \mbox{dist}\,(z,z')^\mu}.
\]
Although the result could be proved using these spaces alone, we can
obtain finer results by including a weight factor, which involves the
exponential of a piecewise radial function $R$. On each strip
$S_{\alpha,j}$, define a radial function $r_{\alpha,j} = \sqrt{s^2 +
  t^2}$, where $s$ is the arclength parameter along $\gamma_\alpha$
and $s=0$ corresponds to the point $p_{\alpha,j}$. The functions
$r_{\alpha,j}$ and $r_{\alpha,j+1}$ match up continuously at
$S_{\alpha,j} \cap S_{\alpha,j+1}$. Now define a function $R$ on
$\Sigma$ as follows: on each neck region of every horizontal catenoid
set $R \equiv 1$; on the portion of $\Sigma$ which is a graph over
$S_{\alpha,j} \setminus \calO_{\alpha,j}$ (where $\calO_{\alpha,j}$ is
some ball which is larger than the projection of the neck region), set
$R = r_{\alpha,j}$.  It is convenient to replace this function with a
slightly mollified version which is smooth everywhere, and which has
the property that $|\nabla R| + |\nabla^2 R| \leq 2$, so we assume
this is the case.  Finally, define
\[
e^{\kappa R} \calC^{k,\mu}(\Sigma) = \{ u = e^{\kappa R} v: v \in \calC^{k,\mu}(\Sigma) \}.
\]
Given $u \in e^{\kappa R} \calC^{k,\mu}(\Sigma),$ we consider the following norm:
\[ \|u\|_{k,\mu,\kappa}= \| e^{- \kappa R} \, u \|_{k,\mu}. \]
\begin{prop} 
Fix any $\kappa \in (-1,1)$ and $\mu \in (0,1)$. If $\Sigma$ is any nearly minimal surface, as 
constructed above, then 
\[
L_\Sigma: e^{\kappa R} \calC^{2,\mu}(\Sigma) \longrightarrow e^{\kappa R} \calC^{0,\mu}(\Sigma)
\]
is Fredholm.
\label{Fredholms}
\end{prop}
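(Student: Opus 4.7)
\medskip

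The plan is a standard parametrix construction in the spirit of \cite{MS}. First I would decompose $\Sigma=\Sigma_\calF$ into a large compact core $\Sigma_0$, containing all of the catenoidal necks and transition regions where the cutoffs $\chi_{\alpha,j}$ vary, together with finitely many ends $E_\alpha$. Each $E_\alpha$ is a normal graph over the complement of a compact set in the vertical plane $P_\alpha$, with exponentially decaying graph function by \eqref{decayv}. Differentiating the minimal surface equation at this graph, the restriction of $L_\Sigma$ to $E_\alpha$ agrees with the model Jacobi operator $L_{P_\alpha}=\Delta_{\mathbb{R}^2}-1$ plus a perturbation whose coefficients decay like $r^{-1/2}e^{-r}$. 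In particular $L_\Sigma$ is uniformly elliptic and, on the ends, asymptotically a constant-coefficient operator.

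Next I would invert the model operator. The homogeneous solutions of $L_P$ on $\RR^2$ have leading behaviour $e^{\pm r}$, so the indicial roots are $\pm 1$, and for any $\kappa\in(-1,1)$ the weight $e^{\kappa R}$ is non-resonant. Using the explicit Green kernel \eqref{Green2} together with the asymptotics \eqref{Green3}, one shows directly that convolution with $K_0$ defines a bounded operator
\[
G_{P}:e^{\kappa r}\calC^{0,\mu}(P)\longrightarrow e^{\kappa r}\calC^{2,\mu}(P),
\]
satisfying $L_P G_P=\Id$; the $\calC^{2,\mu}$ bound follows from local Schauder estimates, and the weighted $\calC^0$ bound is a direct computation splitting the integral into the regions $|z'|\le |z|/2$ and $|z'|\ge |z|/2$ and using $|\kappa|<1$.

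Then I would glue. Pick a smooth partition of unity $\{\chi_0,\chi_\alpha\}$ subordinate to $\{\Sigma_0,E_\alpha\}$ with slightly larger cutoffs $\widetilde\chi_\bullet\equiv 1$ on $\supp\chi_\bullet$, let $G_0$ be a Schauder parametrix on a neighbourhood of $\Sigma_0$, and set
\[
\widetilde G_\Sigma \;=\; \widetilde\chi_0\, G_0\,\chi_0 \;+\; \sum_\alpha \widetilde\chi_\alpha\, G_{P_\alpha}\,\chi_\alpha.
\]
A direct commutator computation gives $L_\Sigma\widetilde G_\Sigma=\Id+K$, where
\[
K \;=\; \sum_\bullet [L_\Sigma,\widetilde\chi_\bullet]\,G_\bullet\,\chi_\bullet \;+\; \sum_\alpha \widetilde\chi_\alpha(L_\Sigma-L_{P_\alpha})\,G_{P_\alpha}\,\chi_\alpha.
\]
The commutator terms are first-order operators supported in compact collars, so they map $e^{\kappa R}\calC^{0,\mu}$ compactly into itself by Arzel\`a--Ascoli. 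The terms of the second type have coefficients decaying like $e^{-r}$ on $E_\alpha$, which combined with $|\kappa|<1$ produces an operator whose image is uniformly $\calC^{2,\mu}$-bounded on each annulus and globally of class $e^{(\kappa-\delta) R}\calC^{0,\mu}$ for some $\delta>0$, hence compact in $e^{\kappa R}\calC^{0,\mu}$. Thus $\widetilde G_\Sigma$ is a right parametrix with compact remainder. The same construction with $\kappa$ replaced by $-\kappa$, combined with the formal self-adjointness of $L_\Sigma$ with respect to the pairing between $e^{\kappa R}\calC^{0,\mu}$ and its dual, furnishes a left parametrix, and the Fredholm property follows.

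The main obstacle is the weighted mapping property of $G_P$ and the subsequent verification that the error $K$ is genuinely compact on $e^{\kappa R}\calC^{0,\mu}(\Sigma)$; once the strict inequality $|\kappa|<1$ is exploited to produce a small spare exponential decay $e^{-\delta R}$ in the image of $K$, compactness becomes a routine Arzel\`a--Ascoli argument, but it is here that the hypothesis $\kappa\in(-1,1)$ enters essentially.
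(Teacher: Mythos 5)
Your overall strategy (model inverse $G_P=(\Delta_{\RR^2}-1)^{-1}$ on the ends, interior Schauder parametrix, patching with cutoffs) is the same as the paper's, but there is a genuine gap at the decisive step: the compactness of the remainder term $\tilde\chi_\alpha\,(L_\Sigma-L_{P_\alpha})\,G_{P_\alpha}\,\chi_\alpha$. This operator is a second-order operator with exponentially decaying coefficients applied to $G_{P_\alpha}f\in e^{\kappa r}\calC^{2,\mu}$, so its output lies only in $e^{(\kappa-1)r}\calC^{0,\mu}$: it gains decay but \emph{no} regularity. Your claim that its image is ``uniformly $\calC^{2,\mu}$-bounded on each annulus'' is unjustified, and without a local regularity gain Arzel\`a--Ascoli does not apply: a set bounded in $e^{(\kappa-\delta)R}\calC^{0,\mu}$ is not precompact in $e^{\kappa R}\calC^{0,\mu}$, because on any fixed compact piece the weight gain is irrelevant and the inclusion $\calC^{0,\mu}\hookrightarrow\calC^{0,\mu}$ (same H\"older exponent) is not compact. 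This is exactly the subtlety the paper flags and then removes: one must first upgrade the end parametrix from the model inverse $G_{P_\alpha}$ to a parametrix for $L_\Sigma$ itself on the end, either by restricting to the complement of a larger ball so that the remainder $\mathfrak{R}=L_\Sigma G_{\RR^2}-\operatorname{Id}$ has small norm and $\operatorname{Id}+\mathfrak{R}$ is inverted by a Neumann series, or by correcting $G_{\RR^2}$ with an asymptotic (elliptic parametrix) series so that the remainder maps into $e^{(\kappa-1)r}\calC^{\infty}$, which is genuinely compact. Only after this step does the glued parametrix have remainder consisting solely of commutator terms $[L_\Sigma,\tilde\chi]\,G\,\chi$, which are order $-1$ with compactly supported image and hence compact; in your version the problematic perturbation term survives in $K$ and your argument for its compactness fails.

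A secondary, smaller issue: obtaining the left parametrix ``by duality'' from formal self-adjointness is shaky in H\"older spaces (the dual of $\calC^{0,\mu}$ is not a space on which this pairing argument runs cleanly); the paper instead verifies $\tilde G_\Sigma L_\Sigma=\operatorname{Id}+\mathfrak{R}''$ by the same direct commutator computation, which is the safer route. Your discussion of the weighted mapping property of $G_P$ for $|\kappa|<1$ and of the interior and commutator terms is fine and matches the paper.
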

\begin{proof} If the elliptic operator $L_\Sigma$ has local
  parametrices with compact remainder on each end of $\Sigma$, then we
  can patch together these local parametrices to obtain a parametrix
  on all of $\Sigma$ with similarly good properties. Recall that a
  local parametrix on a bounded open set $\calU$ in $\Sigma$ is a
  continuous linear operator $\tilde{G}_\calU: \calE'(\calU) \to
  \calD'(\calU)$, between the spaces of compactly supported and all
  distributions on $\calU$, satisfying
\begin{equation}
  L \tilde{G}_\calU = \mbox{Id} -\mathfrak{R}, \quad \tilde{G}_\calU L
  = \mbox{Id} - \mathfrak{R}',
\label{defparam}
\end{equation}
where $\mathfrak{R}$ and $\mathfrak{R}'$ are smoothing of infinite
order, and such that
\[
\tilde{G}_\calU: \calC^{0,\mu} \cap \calE'(\calU) \longrightarrow \calC^{2,\mu}(\calU).
\]
Similarly, if $E$ is any infinite end of $\Sigma$, then a local parametrix on $E$ is a linear 
operator $\tilde{G}_E$ which is bounded as a map $e^{\kappa R} \calC^{0,\mu}(E) \to e^{\kappa R}
\calC^{2,\mu}(E)$, and satisfies the analogue of \eqref{defparam}, where $\mathfrak{R}$ and 
$\mathfrak{R}'$ are again infinite order smoothing operators which have range in a space 
of (smooth) functions which have a fixed rate of decay at infinity.  It follows directly from
the Arzela-Ascoli theorem and these mapping properties that $\mathfrak{R}$ and $\mathfrak{R}'$ 
are compact operators on these weighted H\"older spaces. Thus, once we produce this parametrix,
which is an inverse to $L$ modulo compact remainder terms, a standard argument from
functional analysis then shows that $L$ is Fredholm. 

Since $L_\Sigma$ is uniformly elliptic, the existence of local parametrices on bounded open sets is 
one of the basic theorems of microlocal analysis, see \cite{Shubin}. The construction of parametrices 
on the ends of $\Sigma$ uses more, namely that $L_\Sigma$ is `fully elliptic' near infinity, which 
means that it is strongly invertible there in a sense we make precise below. 

Each end of $\Sigma$ has the form $P \setminus \calO$ where $P$ is a vertical plane and $\calO$ is a 
large ball of finite radius.  The restriction of $L$ to each end is a decaying perturbation of the 
basic operator $\Delta_{\RR^2} - 1$. The restriction to the complement of $\calO$ of this operator 
has an inverse, the Schwartz kernel of which, also known as the Green function, is expressed in terms 
of the modified Bessel function
\[
G_{\RR^2}(z,z') = c K_0(|z-z'|) \sim c' |z-z'|^{-\frac12} e^{-|z-z'|}\
,\ \mbox{as}\ |z-z'| \to \infty.
\]
It is not hard to check (see \cite{MV}) that if $|\kappa| < 1$ and $r = |z|$, then
\[
G_{\RR^2}:  e^{\kappa r} \calC^{0,\mu}(\RR^2) \longrightarrow e^{\kappa r} \calC^{2,\mu}(\RR^2).
\]
(The fact that this operator increases regularity by $2$ orders is classical; the slightly more subtle point is
that it also preserves the growth or decay rate $e^{\kappa r}$ when $|\kappa| < 1$.) 
Now write $L_\Sigma = \Delta_{\RR^2} - 1 + F$ where $F$ is a second order operator with smooth
coefficients which decay like $e^{-r}$. From this we deduce that
\[
L_\Sigma G_{\RR^2} - \mbox{Id} = \mathfrak{R}: e^{\kappa r}
\calC^{0,\mu}(\RR^2 \setminus \calO) \longrightarrow e^{(\kappa - 1)
  r} \calC^{0,\mu}(\RR^2 \setminus \calO).
\]
This does not yet compactly include into $e^{\kappa r} \calC^{0,\mu}$ since there is no gain of regularity so
we cannot apply the Arzela-Ascoli Theorem. There are two effective ways to overcome this: first, restricting 
to the complement of an even larger ball, we can make the norm of this remainder term as small as desired,
hence $\mbox{Id} + \mathfrak{R}$ can be inverted using a Neumann
series. Equivalently, we can use a standard elliptic parametrix
construction to modify $G_{\RR^2}$ by an asymptotic series so that the
new modified parametrix satisfies $L G = \mbox{Id} - \mathfrak{R}$
where $\mathfrak{R}$ maps into $e^{(\kappa-1)r} \calC^\infty(\RR^2
\setminus \calO)$.  Either of these methods produces a global
parametrix for $L_\Sigma$ with compact remainder on each end of
$\Sigma$.

Now, cover $\Sigma$ by open sets of the form $P_\alpha \setminus
\calO_\alpha$ and one relatively compact open set $\calU$. Using the
standard elliptic parametrix construction on this bounded set and the
parametrices constructed above on each $P_\alpha$, we may form a
global parametrix as follows.  Choose a partition of unity for this
open cover, $\{\chi_0, \chi_\alpha\}_{\alpha \in A}$, and for each
open set here choose another smooth function $\tilde{\chi}_i$ with
support in $\calU$ for $i = 0$ and in $P_\alpha \setminus
\calO_\alpha$ for $i = \alpha$, such that $\tilde{\chi}_i = 1$ on the
support of $\chi_i$.  Now define
\[
\tilde{G}_\Sigma = \tilde{\chi}_0 G_0 \chi_0 + \sum_{\alpha \in A}
\tilde{\chi}_\alpha G_\alpha \chi_\alpha.
\]
We calculate that
\begin{multline*}
  L_\Sigma \tilde{G}_\Sigma = \tilde{\chi}_0 L_\Sigma G_0 \chi_0 +
  \sum_\alpha \tilde{\chi}_\alpha L_\Sigma G_\alpha \chi_\alpha +\\
  [L_\Sigma, \tilde{\chi}_0] G_0 \chi_0 + \sum_\alpha [L_\Sigma,
  \tilde{\chi}_\alpha] G_\alpha \chi_\alpha = \mbox{Id} +
  \mathfrak{R}_\Sigma.
\end{multline*}
We use here that $L_\Sigma G_i = \mbox{Id}$ on the support of $\chi_i$
so the first set of terms on the right is equal to $\sum
\tilde{\chi}_i \mbox{Id} \chi_i = \sum \chi_i \mbox{Id} =
\mbox{Id}$. The remainder $\mathfrak{R}_\Sigma$ is a pseudodifferential operator
of order $-1$ with image lying in the union of the supports of the
$\nabla \tilde{\chi}_i$, which is a compact set. Hence, using the well-known
mapping properties of such operators, $\mathfrak{R}_\Sigma: e^{\kappa R} \calC^{0,\mu} \to e^{\kappa R}
\calC^{0,\mu}$ is a compact operator.  A similar calculation shows
that $\tilde{G}_\Sigma L_\Sigma = \mbox{Id} + \mathfrak{R}_\Sigma''$
is also compact.

We have now produced an approximate inverse modulo compact remainders, and
as explained at the beginning of the proof, this suffices to prove that $L$ is Fredholm.
\end{proof}

The next step is to show that $L_\Sigma$ is invertible provided the
minimal neck separation $D$ is sufficiently large. This fails of
course if $L_\Sigma$ acts on the entire space $e^{\kappa
  R}\calC^{2,\mu}(\Sigma)$ because of the exponentially decaying
Jacobi field generated by vertical translations. To circumvent this
issue, we restrict $L_\Sigma$ to the subspace $e^{\kappa R}
\calC^{k,\mu}_{\even}(\Sigma)$ of even functions with respect to the
reflection $t \mapsto -t$. (Note that we can assume that the radial
function $R$ is even.)
\begin{prop}
  Let $\Sigma$ be a nearly minimal surface associated to the geodesic
  network $\calF$.  There exists a $D_0 > 0$ such that if the minimal
  neck separation $D$ is greater than $D_0$, 
  then
  \[
  L_\Sigma: e^{\kappa R}\calC^{2,\mu}_{\even}(\Sigma) \longrightarrow
  e^{\kappa R} \calC^{0,\mu}_{\even}(\Sigma)
  \]
  is invertible.
  \label{invertible}
\end{prop}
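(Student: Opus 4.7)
The plan is to establish invertibility by combining (i) injectivity of $L_\Sigma$ on the even weighted Hölder space, obtained via a blow-up/compactness argument, with (ii) a computation showing that the Fredholm index of $L_\Sigma$ on this space is zero. Given the Fredholm property already proved in Proposition~\ref{Fredholms}, these two facts together imply invertibility.

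For injectivity, I would argue by contradiction in the spirit of Proposition~\ref{prop:index0}. Suppose there is a sequence of configurations $\calF_j$ with $D_j \to \infty$ and nontrivial even elements $\phi_j \in e^{\kappa R_j}\calC^{2,\mu}_{\even}(\Sigma_j)$ in the kernel of $L_{\Sigma_j}$. The key preliminary observation is that the asymptotic model of $L_{\Sigma_j}$ on each end is $\Delta_{\RR^2}-1$, whose indicial roots are $\pm 1$; since $|\kappa|<1$, the far-field expansion \eqref{farfield} forces the growing coefficients $F^+_\alpha$ of $\phi_j$ to vanish on every end, so each $\phi_j$ is automatically bounded and in fact $L^2$ at infinity. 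Normalize so that $\sup_{\Sigma_j}|\phi_j|=1$, attained at some $p_j$, apply isometries $T_j$ of $\HH^2\times\RR$ sending $p_j$ to the origin, and pass to a subsequence so that $T_j(\Sigma_j)$ converges locally smoothly to a limit $S_\infty$. By the geometry of the gluing construction and the standing assumption that the neck parameters lie in a fixed compact subset of $(0,\eta_0)$, the only possible limits are a vertical plane $P$ or a horizontal catenoid $K_\eta$. The translated $\phi_j$ converge on compacta to a bounded even solution $\phi_\infty$ of $L_{S_\infty}\phi_\infty=0$ with $|\phi_\infty(0)|=1$. Bounded Jacobi fields on $P$ are excluded by the Fourier-transform argument used in the proof of Proposition~\ref{specnondeg}; on $K_\eta$, bounded Jacobi fields automatically lie in $L^2$ by the far-field expansion, and Proposition~\ref{specnondeg} rules out any nonzero $L^2$ Jacobi field that is even with respect to ${\cal R}_t$. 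Either alternative yields a contradiction.

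For the index computation, since the asymptotic model $\Delta_{\RR^2}-1$ has no indicial roots in the interval $(-1,1)$, a standard relative-index argument shows that the Fredholm index of $L_\Sigma$ on $e^{\kappa R}\calC^{2,\mu}_{\even}(\Sigma)$ is independent of $\kappa \in (-1,1)$. At $\kappa=0$ the operator is formally self-adjoint in $L^2(\Sigma)$ and commutes with ${\cal R}_t$, so on the even subspace the cokernel is naturally paired with the kernel via the $L^2$ inner product; hence the Fredholm index is zero at $\kappa=0$ and therefore throughout $(-1,1)$. Combined with the injectivity from Step (i), this gives invertibility.

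The main obstacle is step (i): the compactness argument relies on a clean classification of pointed Gromov--Hausdorff limits of $(\Sigma_j, p_j)$, which in turn requires uniform two-sided control on the neck parameters (bounded below away from $0$ to prevent degeneration of the catenoid limits, and bounded above away from $\eta_0$ to prevent a Scherk-surface limit on which Proposition~\ref{specnondeg} does not directly apply). The preliminary indicial-root step, upgrading a kernel element in the weighted space to an $L^2$ element, is what makes it possible to invoke the nondegeneracy in Proposition~\ref{specnondeg} once the limit is taken.
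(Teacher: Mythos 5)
Your argument is correct in outline, but it runs the paper's proof in the opposite direction. The paper establishes \emph{surjectivity} directly and quantitatively: since each constituent horizontal catenoid is horizontally nondegenerate (Proposition~\ref{specnondeg}), its Jacobi operator has an exact inverse $G_{\alpha\beta}$ on the even weighted spaces, and patching these with cutoffs gives $\hat{G}_\Sigma=\sum \tilde{\chi}_{\alpha\beta}G_{\alpha\beta}\chi_{\alpha\beta}$ with $L_\Sigma \hat{G}_\Sigma-\mathrm{Id}=-\hat{\mathfrak{R}}_\Sigma$ of norm $O(e^{-(\kappa+1)D/2})$, so a Neumann series produces a right inverse once $D$ is large; injectivity is then obtained either by the blow-up argument of Proposition~\ref{prop:index0} or from the vanishing of the index. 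You instead prove \emph{injectivity} first, via the blow-up argument supplemented by the genuinely useful observation that, since there are no indicial roots in $(-1,1)$, the far-field expansion \eqref{farfield} forces any kernel element of the weighted space to have vanishing growing coefficients and hence to decay (this step is needed even to justify the paper's remark that ``the proof of Proposition~\ref{prop:index0} applies equally well here'', since for $\kappa>0$ kernel elements are not a priori $L^2$), and then you deduce surjectivity from index zero. Two caveats. First, your duality argument at $\kappa=0$ needs care on H\"older spaces: the surface has infinite area, bounded functions are not $L^2$, and cokernel elements a priori live in a distributional dual; one should invoke elliptic regularity and the same indicial-root analysis to identify them with decaying solutions of $L_\Sigma v=0$ before pairing (equivalently, identify the weighted H\"older index with the $L^2$ index), whereas the paper simply quotes index zero without proof. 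Second, and more consequential for what follows, the paper's explicit parametrix buys more than invertibility: it yields the uniform bound on $\|G_\Sigma\|$ as $D\to\infty$ (Corollary~\ref{cor:G}), which is exactly what the contraction mapping in Theorem~\ref{mgt} requires. Your qualitative injectivity-plus-index route gives invertibility for each large $D$ but not, by itself, this uniform estimate; recovering it would require a further compactness/contradiction argument, which again needs the neck parameters confined to a compact subset of $(0,\eta_0)$ --- the two-sided restriction you correctly flag, and which the paper makes explicit only in the proposition following Theorem~\ref{mgt}.
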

\begin{proof} We have already proved that the mapping $L_\Sigma$ is Fredholm on the entire 
weighted H\"older space, and it is clear that this remains true when restricting to the subspace 
of even functions. Let $G_\Sigma$ denote the generalized inverse of $L_\Sigma$. Recall that,
by definition, this means that $L_\Sigma G_\Sigma - \mbox{Id} = \mathfrak{R}_\Sigma$ is a projector
onto the complement of the range of $L_\Sigma$ and $G_\Sigma L_\Sigma - \mbox{Id} = \mathfrak{R}_\Sigma'$ 
is a projector onto the nullspace of $L_\Sigma$. In particular, these projectors both have 
finite rank. Since the index of $L_\Sigma$ vanishes, $\Tr \mathfrak{R}_\Sigma' - \Tr \mathfrak{R}_\Sigma =
\mbox{Ind}\, (L_\Sigma) = 0$.

To proceed, we sketch a slightly different version of the parametrix construction. Recall that $\Sigma$ 
is a union of truncated (and slightly perturbed) horizontal catenoids $K_{\alpha\beta}^0$. 
These catenoids are joined along the lines $\{q_{\alpha, j}\}\times \RR$, which are at distance at least 
$D/2$ away from each neck region. In fact, when $R >D/2$, Proposition \ref{prop:H} shows that 
\[
\sup e^{-\kappa R} |H_\Sigma| \leq C \sup e^{- (\kappa+1) R} R^{-1/2}
\leq C e^{-(\kappa+1)D/2} D^{-1/2}.
\]
On the other hand, this inequality trivially holds when $R \leq D/2.$

Now choose an open cover comprised by slightly larger truncations of
these catenoids, a partition of unity $\chi_{\alpha \beta}$ associated
to this open cover, and smooth cutoff functions $\tilde{\chi}_{\alpha
  \beta}$ which are supported in these same open sets and which equal
$1$ on the support of $\chi_{\alpha \beta}$.  Then set
\[
\hat{G}_\Sigma = \sum_{(\alpha \beta) \in A'}
\tilde{\chi}_{\alpha\beta} G_{\alpha \beta} \chi_{\alpha \beta}.
\]
Exactly the same computation as above shows that $L_\Sigma
\hat{G}_\Sigma - \mbox{Id} = -\hat{\mathfrak{R}}_\Sigma$ is compact on $e^{\kappa
  R} \calC^{0,\mu}_{\ev}(\Sigma)$, but furthermore has norm
$||\hat{\mathfrak{R}}_\Sigma|| \leq C e^{-(\kappa+1)D/2}$, where $C$
is independent of $D$.

Finally, choosing $D$ sufficiently large, then $\mbox{Id} -
\hat{\mathfrak{R}}_\Sigma$ is invertible on $e^{\kappa R}
\calC^{0,\mu}_{\ev}$, and hence $L_\Sigma G_\Sigma = \mbox{Id}$ where
\[
G_\Sigma = \hat{G}_\Sigma \circ (\mbox{Id} -
\hat{\mathfrak{R}}_\Sigma)^{-1}.
\]
This shows that $L_\Sigma$ is surjective.  The proof of Proposition \ref{prop:index0} 
applies equally well here and implies that $L_\Sigma$ is injective as well, provided $D$
is large enough. (Alternately, the index of $L_\Sigma$ is zero, hence injectivity
follows directly from surjectivity.) This concludes the proof.
\end{proof}

It is clear from the local nature of the H\"older norms and the definition of this
parametrix that the operator norm of $\hat{G}_\Sigma$ is uniformly bounded as $D \to \infty$.
Its modification by $(\mbox{Id} - \hat{\mathfrak{R}})^{-1}$ does not change this,
so we obtain the 
\begin{cor}
If $\Sigma$ satisfies all the assumptions of the previous proposition, then the norm of the 
inverse $G_\Sigma$ on $e^{\kappa R} \calC^{0,\mu}_{\ev}$ is uniformly bounded as $D \to \infty$.
\label{cor:G}
\end{cor}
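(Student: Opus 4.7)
The plan is to exploit the explicit formula
\[
G_\Sigma = \hat{G}_\Sigma \circ (\mbox{Id} - \hat{\mathfrak{R}}_\Sigma)^{-1}
\]
derived in the proof of Proposition~\ref{invertible} and bound each factor uniformly in $D$. Submultiplicativity of the operator norm then gives the corollary.

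The Neumann factor is immediate: the proof of Proposition~\ref{invertible} already establishes $\|\hat{\mathfrak{R}}_\Sigma\| \leq C e^{-(\kappa+1)D/2}$ with $C$ independent of $D$, so for $D$ large enough this norm is at most $1/2$, and the Neumann series yields $\|(\mbox{Id} - \hat{\mathfrak{R}}_\Sigma)^{-1}\| \leq 2$. For the approximate inverse $\hat{G}_\Sigma = \sum_{(\alpha\beta) \in A'} \tilde{\chi}_{\alpha\beta}\, G_{\alpha\beta}\, \chi_{\alpha\beta}$, the cutoffs $\chi_{\alpha\beta}, \tilde{\chi}_{\alpha\beta}$ are constructed with uniformly bounded $\calC^{2,\mu}$ norms and the open cover has bounded overlap independent of $D$, so the bound reduces to a uniform bound on $\|G_{\alpha\beta}\|$, where $G_{\alpha\beta}$ inverts the Jacobi operator of the single catenoid $K_{\eta_{\alpha\beta}}$ on even functions in $e^{\kappa R}\calC^{0,\mu}$. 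This inverse exists by Proposition~\ref{specnondeg}, since on the even subspace $0$ is neither an $L^2$ eigenvalue nor a threshold of essential spectrum, and Fredholm theory on the weighted space (as in the proof of Proposition~\ref{Fredholms}) transfers invertibility to $e^{\kappa R}\calC^{0,\mu}_{\ev}$ because $|\kappa|<1$ lies in the indicial interval.

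The main obstacle is obtaining the uniform bound on $\|G_{\eta}\|$ as $\eta$ varies over the compact subinterval $[c, \eta^*] \subset (0,\eta_0)$ of allowable necksizes. I would proceed by contradiction and compactness: suppose the norms blow up along some sequence $\eta_n \to \eta^\infty \in [c,\eta^*]$; choose $f_n$ of unit weighted norm with $\|G_{\eta_n} f_n\|_{2,\mu,\kappa} \to \infty$ and normalise $u_n := G_{\eta_n} f_n / \|G_{\eta_n} f_n\|_{2,\mu,\kappa}$, so $L_{\eta_n} u_n \to 0$ in $e^{\kappa R}\calC^{0,\mu}$ while $\|u_n\|_{2,\mu,\kappa} = 1$. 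Local elliptic estimates together with Proposition~\ref{genasymprop} (used to rule out mass escaping into the planar ends) extract a subsequential limit $u_\infty$ that is nonzero, even, and bounded, solving $L_{\eta^\infty} u_\infty = 0$ on $K_{\eta^\infty}$; the far-field expansion~\eqref{farfield} combined with $|\kappa| < 1$ forces $u_\infty \in L^2$. This contradicts the even nondegeneracy established in Proposition~\ref{specnondeg}, completing the argument.
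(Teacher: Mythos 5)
Your argument is correct and follows essentially the same route as the paper: the paper also bounds $G_\Sigma$ through the factorization $G_\Sigma=\hat{G}_\Sigma\circ(\mathrm{Id}-\hat{\mathfrak{R}}_\Sigma)^{-1}$, controlling the Neumann factor via $\|\hat{\mathfrak{R}}_\Sigma\|\le Ce^{-(\kappa+1)D/2}$ and $\hat{G}_\Sigma$ via the local nature of the weighted H\"older norms, i.e.\ via the norms of the individual catenoid inverses $G_{\alpha\beta}$, so your compactness argument for $\sup_\eta\|G_\eta\|$ over a compact range of necksizes is exactly the step the paper declares ``clear'' (and you are right that the necksizes must be kept away from $\eta_0$ as well as from $0$, as in Proposition 4.5). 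The one slip is citing Proposition~\ref{genasymprop} to rule out mass escaping along the planar ends---that proposition concerns the nonlinear decay of the graph functions; the correct tool is the invertibility of $\Delta_{\RR^2}-1$ on $e^{\kappa r}\calC^{k,\mu}$ for $|\kappa|<1$ (no nontrivial homogeneous solutions with such growth), exactly as used in the proofs of Propositions~\ref{Fredholms} and~\ref{prop:index0}.
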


The slightly surprising fact is that these estimates are independent of the topology or number of ends of $\calF$ 
and $\Sigma$, but this is due to the character of the function spaces being used. 

\begin{thm}  Let $\calF$ be a geodesic network in $\HH^2$ and $\Sigma_{\calF}$ the nearly minimal surface constructed 
from it. Also, fix $\kappa \in (-1,0)$. If the minimal neck separation $D$ is sufficiently large,
then there exists a function $u \in e^{\kappa R}\calC^{2,\mu}_{\ev}(\Sigma_\calF)$ with 
$||u||_{2,\mu,\kappa} \leq C e^{-(\kappa+1)D/2}D^{-1/2}$ such that $\Sigma_{\calF}(u)$ is
an embedded minimal surface which is a small normal graph over $\Sigma_{\calF}$.
\label{mgt}
\end{thm}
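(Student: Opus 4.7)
The strategy is to reformulate the minimal surface equation $\calN(u)=0$ as the fixed point equation \eqref{eq2}, namely $u = \calT(u) := -G_\Sigma(H_\Sigma + Q(u))$, and to solve it by a contraction mapping argument on a small ball in the weighted Hölder space $X := e^{\kappa R}\calC^{2,\mu}_{\ev}(\Sigma_\calF)$. All the ingredients have already been assembled: Proposition \ref{prop:H} controls the error $H_\Sigma$, Proposition \ref{invertible} together with Corollary \ref{cor:G} provides the bounded inverse $G_\Sigma: e^{\kappa R}\calC^{0,\mu}_{\ev} \to X$ whose norm is uniformly bounded as $D \to \infty$, and properties (ii)--(iii) of $\calN$ provide the quadratic estimate for $Q$. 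Choosing the weight parameter in the range $\kappa \in (-1,0)$ is precisely what allows both the linear inversion (which requires $|\kappa|<1$) and the quadratic estimate (which requires $\kappa<0$) to succeed simultaneously.

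The key estimates I would establish, in order, are as follows. First, from Proposition \ref{prop:H} and the definition of $R$, the mean curvature $H_\Sigma$ is supported in the transition strips $Q_{\alpha,j}$ where $R \geq D/2$, so
\[
\|H_\Sigma\|_{0,\mu,\kappa} = \|e^{-\kappa R} H_\Sigma\|_{0,\mu} \leq C \sup_{R \geq D/2} e^{-\kappa R} R^{-1/2} e^{-R} \leq C e^{-(\kappa+1)D/2} D^{-1/2}.
\]
Since $\kappa+1 > 0$, this quantity tends to zero as $D \to \infty$. Second, the quadratic remainder satisfies $|Q(u)(z)| \leq C (|u|^2 + |\nabla u|^2 + |\nabla^2 u|^2)(z)$ pointwise (for $\|u\|_{2,\mu}$ bounded). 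For $u = e^{\kappa R} v$ with $v \in \calC^{2,\mu}$ and $\kappa < 0$, the factor $e^{\kappa R}$ decays, so
\[
\|Q(u)\|_{0,\mu,\kappa} = \|e^{-\kappa R} Q(u)\|_{0,\mu} \leq C \|e^{\kappa R}\|_\infty \|v\|_{2,\mu}^2 \leq C \|u\|_{2,\mu,\kappa}^2,
\]
and an analogous estimate gives $\|Q(u)-Q(v)\|_{0,\mu,\kappa} \leq C (\|u\|_{2,\mu,\kappa}+\|v\|_{2,\mu,\kappa}) \|u-v\|_{2,\mu,\kappa}$.

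With these estimates in hand, the contraction argument is standard. Fix $\Lambda := \|G_\Sigma\|$ (uniformly bounded in $D$) and set $\rho := 2\Lambda C e^{-(\kappa+1)D/2} D^{-1/2}$. On the ball $B_\rho = \{u \in X_{\ev} : \|u\|_{2,\mu,\kappa} \leq \rho\}$ we check that $\calT$ maps $B_\rho$ into itself: indeed
\[
\|\calT(u)\|_{2,\mu,\kappa} \leq \Lambda(\|H_\Sigma\|_{0,\mu,\kappa} + \|Q(u)\|_{0,\mu,\kappa}) \leq \tfrac{\rho}{2} + \Lambda C \rho^2 \leq \rho,
\]
provided $D$ is sufficiently large so that $\Lambda C \rho \leq \tfrac12$. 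A similar computation using the Lipschitz estimate for $Q$ shows $\|\calT(u)-\calT(v)\|_{2,\mu,\kappa} \leq 2\Lambda C \rho \|u-v\|_{2,\mu,\kappa} \leq \tfrac12 \|u-v\|_{2,\mu,\kappa}$. The Banach fixed point theorem then yields a unique solution $u \in B_\rho$, and the evenness is preserved throughout because $H_\Sigma$ is even, $Q$ preserves evenness, and $G_\Sigma$ was defined on the even subspace.

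The main technical point to watch is ensuring that $\Sigma_\calF(u)$ is actually embedded (not merely immersed). Since $\|u\|_{2,\mu,\kappa}$ is small and $\kappa > -1$ gives $|u(z)| \leq \rho e^{\kappa R(z)} \to 0$ at infinity, the normal exponential map is a local diffeomorphism near $\Sigma_\calF$ of uniform size (because the necksizes are bounded below by $c$, controlling the injectivity radius of the tubular neighborhood), and no self-intersections can occur between distant pieces either, since each $K_{\alpha\beta}^0$ is already a small graph over its asymptotic vertical planes and the perturbation is exponentially small. I do not expect any serious obstacle here; the one item requiring care is simply to verify that the combined quadratic/linear smallness beats the geometric thickness constant of the tubular neighborhood, which holds automatically once $D$ is taken large enough.
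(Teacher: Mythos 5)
Your proposal is correct and follows essentially the same route as the paper: rewrite $\calN(u)=0$ as the fixed-point equation \eqref{eq2}, use the weighted estimate $\|H_\Sigma\|_{0,\mu,\kappa}\leq Ce^{-(\kappa+1)D/2}D^{-1/2}$ from Proposition~\ref{prop:H}, the uniformly bounded inverse from Proposition~\ref{invertible} and Corollary~\ref{cor:G}, and the quadratic estimate on $Q$ (valid since $\kappa<0$) to run a contraction on a small ball in $e^{\kappa R}\calC^{2,\mu}_{\ev}$, concluding embeddedness from the smallness of $u$ and its derivatives. The only difference is that you spell out the Lipschitz estimate for $Q$ and the embeddedness discussion in slightly more detail than the paper does, which is harmless.
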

\begin{proof}
We solve $\calN(u) = 0$ in the function space $e^{\kappa R} \calC^{2,\mu}_{\ev}$ by rewriting
this equation as in \eqref{eq2}.  As $\kappa \in (-1,0)$, then 
\[
||Q(u)||_{0,\mu, \kappa} \leq C_1 ||u||_{2,\mu,\kappa}^2, 
\]
and hence if $||H_\Sigma||_{0,\mu,\kappa} \leq A$, then 
\[
|| G_\Sigma( H_\Sigma + Q(u))||_{2,\mu,\kappa} \leq C (A + C_1 ||u||^2_{2,\mu,\kappa}). 
\]
If $||u||_{2,\mu,\kappa} \leq \beta$, then the right hand side here is bounded by $C(A + C_1 \beta^2)$,
and $C(A + C_1 \beta^2) \leq \beta$ provided we choose $\beta = \lambda A$ for some large $\lambda$ 
and then let $A$ be very small.  With these choices, if we write \eqref{eq2} as $u = \calT(u)$, 
then $\calT$ maps the ball of radius $\beta$ in $e^{\kappa R}\calC^{2,\mu}_{\ev}$ to itself. A similar
analysis shows that $\calT$ is a contraction on this ball. 

This proves that there is a unique solution to $\calN(u) = 0$, and that $||u||_{2,\mu,\kappa} \leq \beta$.
Finally, since $\kappa < 0$, $|u| \leq \beta e^{\kappa R} \leq \beta$, and the derivatives of $u$
are similarly small, which implies that $\Sigma_{\calF}(u)$ is embedded.
\end{proof}

\begin{prop}
Let $\calF_j$ be a sequence of geodesic networks as in Theorem~\ref{mgt} (in particular the minimal neck separation $D(\calF_j) \to \infty$) and $\Sigma_j$ the corresponding
minimal surfaces. Suppose that the necksizes $(\eta_{\alpha \beta})_j$ in the constituent horizontal catenoids all
lie in a fixed interval $[c_1, c_2] \subset (0,\eta_0)$.  Then for $j$ sufficiently large, $\Sigma_j$ is horizontally nondegenerate. 
\end{prop}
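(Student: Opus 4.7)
The plan is to argue by contradiction, following the template of Proposition \ref{prop:index0} and using the horizontal nondegeneracy of the catenoids established in Proposition \ref{specnondeg} as the final ingredient. Suppose to the contrary that, after passing to a subsequence, each $\Sigma_j$ admits a nontrivial $\calR_t$-even decaying Jacobi field $\phi_j$. By the far-field expansion of Proposition \ref{asympprop} applied on each vertical planar end, decay forces the exponentially growing coefficients $F^+_\alpha$ to vanish, hence $\phi_j \in L^2(\Sigma_j)$ and the supremum $a_j := \sup |\phi_j|$ is attained at some point $p_j \in \Sigma_j$. Choose an isometry $T_j$ of $\HH^2 \times \RR$ that commutes with $\calR_t$ and sends $p_j$ to $(0,0)$, and set $S_j := T_j(\Sigma_j)$ and $\psi_j := (a_j^{-1} \phi_j) \circ T_j^{-1}$, so that $\psi_j$ is an $\calR_t$-even Jacobi field on $S_j$ with $\sup |\psi_j| = 1 = |\psi_j(0,0)|$.

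Next, I would identify the pointed Gromov--Hausdorff limit of $(S_j,(0,0))$. Since the necksizes $(\eta_{\alpha\beta})_j$ lie in the fixed compact interval $[c_1,c_2] \subset (0,\eta_0)$ while $D(\calF_j) \to \infty$, the building blocks of $\Sigma_{\calF_j}$ stay separated and locally nondegenerate; moreover, the estimate $\|u_j\|_{2,\mu,\kappa} \leq C e^{-(\kappa+1)D_j/2} D_j^{-1/2} \to 0$ from Theorem \ref{mgt} guarantees that the perturbation from $\Sigma_{\calF_j}$ to $\Sigma_j$ is uniformly small in $\calC^{2,\mu}$ on every compact set. Consequently, up to a further subsequence, $S_j$ converges smoothly on compact sets to a limiting surface $S_\infty$ which is either a vertical plane $P = \gamma \times \RR$ (if the basepoints escape the neck regions) or a horizontal catenoid $K_\eta$ with $\eta \in [c_1,c_2]$ (if the basepoints stay within bounded distance of some neck). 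Standard local Schauder estimates then give $\psi_j \to \psi_\infty$ in $\calC^2_{\mathrm{loc}}(S_\infty)$, producing a nontrivial, bounded, $\calR_t$-even Jacobi field $\psi_\infty$ on $S_\infty$ with $|\psi_\infty(0,0)|=1$.

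It remains to derive a contradiction in each case. If $S_\infty = P$, then $L_P = \Delta_{\RR^2} - 1$ admits no bounded null solutions (by Fourier transform on $\RR^2$), as already invoked in the proof of Proposition \ref{specnondeg}. If instead $S_\infty = K_\eta$, then the far-field expansion \eqref{farfield} applied to $\psi_\infty$, combined with boundedness, forces the $e^{r_\alpha}$ coefficients to vanish on each end, so that $\psi_\infty$ in fact lies in $L^2(K_\eta)$. Since $\psi_\infty$ is $\calR_t$-even, this contradicts Proposition \ref{specnondeg}, which asserts that $L_\eta$ is nondegenerate on the $\calR_t$-even subspace for every $\eta \in (0,\eta_0)$. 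The main obstacle is the convergence step: one must verify carefully that the smallness of $u_j$ is compatible with the choice of basepoints $p_j$, so that the rescaled limits really are model surfaces on which Proposition \ref{specnondeg} (or the triviality of bounded Jacobi fields on $P$) applies. Once this is in place, the contradiction is immediate and completes the proof.
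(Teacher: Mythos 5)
Your proposal is correct and follows essentially the same route as the paper: the paper's proof simply normalizes the even $L^2$ Jacobi field at its maximum point, translates, and repeats the blow-up argument of Proposition \ref{prop:index0}, obtaining a bounded even Jacobi field on a vertical plane (impossible by Fourier transform) or on a horizontal catenoid with $\eta\in[c_1,c_2]$ (forced into $L^2$ by the far-field expansion, contradicting Proposition \ref{specnondeg}). The only point to watch is the one you flag yourself: the recentering isometries should be horizontal ones so that $\calR_t$-evenness passes to the limit, but this is exactly the convergence bookkeeping the paper also takes for granted.
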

\begin{proof}
  Suppose that this is not the case, so that there exists some
  subsequence $\Sigma_{j'}$, which we immediately relabel as
  $\Sigma_j$ and a function $\varphi_j \in L^2(\Sigma_j)$ which is
  even with respect to ${\cal R}_t$ and which lies in the nullspace of
  the Jacobi operator $L_j$ on $\Sigma_j$.  Renormalize $\varphi_j$ to
  have supremum equal to $1$, and suppose that this supremum is
  attained at a point $p_j \in \Sigma_j$.

At this point we can reason as in the proof of Proposition \ref{prop:index0}
to get a contradiction with the nondegeneracy of the vertical 
plane and the horizontal nondegeneracy of the horizontal catenoid. 
\end{proof}

\section{Gluing nondegenerate surfaces} 
A construction which is closely related to the one in the last section
is as follows.  Let $\Sigma_1$ and $\Sigma_2$ be two minimal surfaces
in $\HH^2 \times \RR$ with a finite number of vertical ends, each one
symmetric with respect to the reflection ${\cal R}_t$, and each one
horizontally nondegenerate.  Fix a vertical planar end $E_\ell \subset
\Sigma_\ell$, and choose a sequence of isometries $\phi_{\ell,j}$ (of
the form $\varphi_{\ell,j} \times \mbox{id}$ where each $\varphi_{\ell,j}$ is
an isometry of $\HH^2$) such that the surface $\Sigma_{\ell,j}:=\phi_{\ell,j} (\Sigma_\ell)$
converges to a fixed vertical plane $P = \gamma \times \RR$.
Parametrizing $\gamma$ as $\gamma(s)$, then we suppose that a
half-plane in the end $E_1$ in $\Sigma_{1,j}$ is a horizontal graph
over $(-B_{1,j},\infty) \times \RR$ with $B_{1,j} \to \infty$ and with
graph function $v_{1,j}$, while a half-plane $E_2$ in $\Sigma_{2,j}$
is a horizontal graph over $(-\infty, B_{2,j}) \times \RR$ with
$B_{2,j} \to \infty$ and with graph function $v_{2,j}$. We assume
finally that both $v_{\ell, j}$ converge to $0$ as $j \to \infty$.

Now let $\tilde{\Sigma}_{1, j}$ be the surface which agrees with $\Sigma_{\ell, j}$ away from the
half-plane $(-1,\infty) \times \RR$, and where the graph function is altered to $\chi_1(s) v_{1, j}$;
here $\chi_1(s)$ is a smooth monotone decreasing function which equals $1$ for $s \leq -1$ and 
vanishes for $s \geq 0$. We let $\tilde{\Sigma}_{2,j}$ be a similar alteration of $\Sigma_{2,j}$.
Finally, let 
\[
\Sigma(j) = \left(\tilde{\Sigma}_{1,j} \setminus((0,\infty) \times \RR)\right) \sqcup
\left(\tilde{\Sigma}_{2,j} \setminus((-\infty,0) \times \RR)\right).
\]
It is clear that $\Sigma(j)$ is exactly minimal outside of the vertical strip $(-1,1) \times \RR$.

Furthermore, it is clear that if $\Sigma_1$ and $\Sigma_2$ carry radial functions $R_1$ and $R_2$
as in the previous section, then we can form a radial function $R(j)$ on $\Sigma(j)$, and define
weighted H\"older spaces $e^{\kappa R(j)} \calC^{k,\mu}(\Sigma(j))$. In terms of these, the mean
curvature $H(j)$ of $\Sigma(j)$ tends to zero.

A straightforward modification of the arguments in the preceding section yield a proof of the
\begin{thm}
Let $\Sigma(j)$ be a sequence of nearly minimal surfaces, constructed as above. Assume (as stated
earlier) that both $\Sigma_1$ and $\Sigma_2$ are horizontally nondegenerate. Then for $j$
sufficiently large, there exists a function $u \in e^{\kappa R(j)} \calC^{2,\mu}(\Sigma(j))$ such
that the surface $\Sigma(j,u)$, which is the normal graph over $\Sigma(j)$ with graph function
$u$, is an embedded, horizontally nondegenerate minimal surface. 
\label{thm5.1}
\end{thm}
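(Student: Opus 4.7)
The plan is to mirror the analytic framework of Section 4, with the horizontal catenoid building blocks replaced by the general horizontally nondegenerate surfaces $\Sigma_1,\Sigma_2$, and to restrict throughout to functions which are even with respect to $\calR_t$ so as to exclude the vertical translation Jacobi field. Fix $\kappa\in(-1,0)$, define the radial weight $R(j)$ on $\Sigma(j)$ by combining the weights $R_1, R_2$ (set to $1$ inside the gluing strip $(-1,1)\times\RR$), and work throughout on the spaces $e^{\kappa R(j)}\calC^{k,\mu}_{\even}(\Sigma(j))$.

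First I would establish that
\[
L_{\Sigma(j)}: e^{\kappa R(j)}\calC^{2,\mu}_{\even}(\Sigma(j)) \longrightarrow e^{\kappa R(j)}\calC^{0,\mu}_{\even}(\Sigma(j))
\]
is Fredholm, by the exact parametrix construction of Proposition~\ref{Fredholms}: on each remaining vertical planar end of $\Sigma(j)$, $L_{\Sigma(j)}$ is a decaying perturbation of $\Delta_{\RR^2}-1$, so the Bessel kernel $K_0$ gives a local parametrix preserving the weight for $|\kappa|<1$, and these can be patched with a standard interior parametrix to produce a global approximate inverse modulo compact remainders.

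Next I would show that for $j$ large, $L_{\Sigma(j)}$ is invertible with uniformly bounded inverse $G_{\Sigma(j)}$. The key input replacing the catenoidal Green operators used in Proposition~\ref{invertible} is the existence, for each $\ell=1,2$, of a bounded Green operator $G_{\Sigma_\ell}:e^{\kappa R_\ell}\calC^{0,\mu}_{\even}(\Sigma_\ell)\to e^{\kappa R_\ell}\calC^{2,\mu}_{\even}(\Sigma_\ell)$; this follows from the Fredholm property applied to each $\Sigma_\ell$ together with the horizontal nondegeneracy hypothesis, which rules out elements of the kernel, and hence (since the index vanishes by self-adjointness in the $L^2$-pairing through the weights $\kappa$ and $-\kappa$) of the cokernel. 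Transporting $G_{\Sigma_1},G_{\Sigma_2}$ by the isometries $\phi_{\ell,j}$ and patching with cutoff functions as in Proposition~\ref{invertible} produces an approximate inverse $\hat G_{\Sigma(j)}$ whose remainder $\hat{\mathfrak R}_{\Sigma(j)}$ is supported in the gluing strip, where $R(j)\gtrsim \min(B_{1,j},B_{2,j})=:D(j)\to\infty$. A standard commutator computation then yields $\|\hat{\mathfrak R}_{\Sigma(j)}\|\leq C e^{-(\kappa+1)D(j)/2}$, so $\mathrm{Id}-\hat{\mathfrak R}_{\Sigma(j)}$ is invertible by Neumann series and $G_{\Sigma(j)}=\hat G_{\Sigma(j)}\circ(\mathrm{Id}-\hat{\mathfrak R}_{\Sigma(j)})^{-1}$ has operator norm bounded uniformly in $j$.

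With $G_{\Sigma(j)}$ in hand, I would run the contraction mapping argument of Theorem~\ref{mgt} verbatim for the equation $u=-G_{\Sigma(j)}(H_{\Sigma(j)}+Q(u))$. The mean curvature $H_{\Sigma(j)}$ is supported in $(-1,1)\times\RR$ and, by the decay estimate \eqref{genasym} applied to $v_{\ell,j}\to 0$, satisfies $\|H_{\Sigma(j)}\|_{0,\mu,\kappa}\to 0$ as $j\to\infty$; the quadratic estimate $\|Q(u)\|_{0,\mu,\kappa}\leq C\|u\|_{2,\mu,\kappa}^2$ holds for $\kappa<0$ since the weight is bounded, so the standard contraction argument on a ball of radius $\beta=\lambda\|H_{\Sigma(j)}\|_{0,\mu,\kappa}$ produces a unique small solution $u\in e^{\kappa R(j)}\calC^{2,\mu}_{\even}(\Sigma(j))$. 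Since $\kappa<0$, $u$ and its first two derivatives are pointwise small, so $\Sigma(j,u)$ is embedded. Horizontal nondegeneracy of $\Sigma(j,u)$ follows by the blow-up argument of Proposition~\ref{prop:index0}/Proposition~4.5: a normalized even $L^2$ Jacobi field on $\Sigma(j,u)$ would, upon recentering and taking a Gromov--Hausdorff limit, yield a nontrivial bounded even Jacobi field on one of the three possible limiting surfaces --- a vertical plane $P$, $\Sigma_1$, or $\Sigma_2$ --- and each of these possibilities is excluded, the first by direct Fourier analysis of $L_P=\Delta_{\RR^2}-1$ and the latter two by the hypothesis that $\Sigma_1,\Sigma_2$ are horizontally nondegenerate (combined, via the far-field expansion \eqref{farfield}, with the fact that a bounded Jacobi field on a planar-ended surface automatically lies in $L^2$).

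The main obstacle is the step producing the building-block operators $G_{\Sigma_1},G_{\Sigma_2}$: for horizontal catenoids this came from the explicit spectral information in Proposition~\ref{specnondeg}, whereas here one must derive both the Fredholm property and invertibility on the even weighted Hölder spaces purely from the abstract horizontal nondegeneracy hypothesis, which requires the same parametrix-on-each-end construction used for $\Sigma(j)$ applied now to $\Sigma_1$ and $\Sigma_2$ individually. Everything downstream is then a routine modification of the arguments of Section~4.
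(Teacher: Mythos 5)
Your proposal is correct and reaches the stated conclusion, but it takes a genuinely different route for the key analytic step than the paper does. The paper's proof of Theorem~\ref{thm5.1} obtains the two essential facts -- nondegeneracy of the approximate surface $\Sigma(j)$ for $j$ large and a uniform bound on the norm of the inverse of $L_{\Sigma(j)}$ -- by contradiction/compactness arguments modeled on Proposition~\ref{prop:index0} and the proposition following Theorem~\ref{mgt}: a putative sequence of normalized even Jacobi fields (or of functions violating the uniform estimate) is blown up at points where the maximum is attained, and the limit is a nontrivial bounded even Jacobi field on a vertical plane, on $\Sigma_1$, or on $\Sigma_2$, which is excluded by Fourier analysis on the plane and by horizontal nondegeneracy of the summands combined with the far-field expansion \eqref{farfield}; the contraction mapping is then run exactly as in Theorem~\ref{mgt}. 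You instead build the uniformly bounded inverse constructively, by transplanting and patching Green operators $G_{\Sigma_1}$, $G_{\Sigma_2}$ of the summands as in Proposition~\ref{invertible} and inverting $\mathrm{Id}-\hat{\mathfrak{R}}_{\Sigma(j)}$ by a Neumann series. This is a legitimate alternative -- it is precisely the mechanism the paper uses for the catenoid configurations -- and it is more quantitative; its cost, which you correctly flag, is that you must first invert $L_{\Sigma_\ell}$ on the even weighted H\"older spaces starting from the abstract nondegeneracy hypothesis. Your argument for that (Fredholmness via the parametrix of Proposition~\ref{Fredholms}, injectivity for $\kappa<0$ from horizontal nondegeneracy, surjectivity from vanishing index) is sound, but the one-line justification of index zero should be stated more carefully: duality identifies the cokernel at weight $\kappa$ with the kernel at weight $-\kappa$, whose elements may grow and are not excluded by nondegeneracy, so one must combine $\mathrm{ind}(\kappa)=-\mathrm{ind}(-\kappa)$ with constancy of the index for $\kappa\in(-1,1)$ (no indicial roots of $\Delta_{\RR^2}-1$ are crossed) to conclude the index vanishes -- the same assertion the paper makes without proof in Proposition~\ref{invertible}. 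The paper's contradiction route is leaner in that it needs only the absence of bounded even Jacobi fields on the summands, not invertibility of their Jacobi operators on weighted spaces; your route matches the level of rigor of Section 4, and the remaining steps (smallness of $H_{\Sigma(j)}$ in the weighted norm, the fixed-point argument, embeddedness, and nondegeneracy of the perturbed surface via the blow-up argument) coincide with the paper's.
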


One must check first that $\Sigma(j)$ itself is nondegenerate for $j$ large, and then that the
norm of the inverse of its Jacobi operator on these weighted H\"older spaces remains uniformly
bounded as $j \to \infty$. These facts are both proved by contradiction, and the details of the 
proofs are very similar to what we have done above. The final step, using a contraction mapping
to produce the function $u$ whose graph is minimal, is again done as before.

Notice that if the genera of $\Sigma_1$ and $\Sigma_2$ are $g_1$ and
$g_2$, respectively, then $\Sigma(j)$ and hence the minimal surface
$\Sigma(j,u)$ has genus $g_1 + g_2$.

\medskip

\begin{cor}
  The construction in Theorem~\ref{thm5.1} can be continued
  indefinitely. In other words, let $\Sigma_\ell$ be an infinite
  sequence of minimal, horizontally nondegenerate surfaces, each with
  finite genus and finite number of planar ends, and let $P_j$ be one
  of the planar ends of $\Sigma_j$. Suppose that we have constructed a
  sequence of minimal, horizontally nondegenerate surfaces
  $\Sigma^{(N)}$ inductively by gluing $\Sigma_N$ to $\Sigma^{(N-1)}$
  with the end $P_N$ attached to the end corresponding to $P_{N-1}$ in
  $\Sigma^{(N-1)}$.  Then one can arrange the gluing parameters so
  that $\Sigma^{(N)}$ converges to a minimal surface with an infinite
  number of vertical planar ends.
\end{cor}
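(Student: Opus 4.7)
The plan is to perform the construction inductively, choosing at stage $N$ a gluing parameter $j_N$ large enough to guarantee both a rapid quantitative decay of the stage-$N$ perturbation and a spatial separation of the stage-$N$ modification from a fixed central region. This will make the sequence $\Sigma^{(N)}$ stabilize on compact subsets. Fix a basepoint $p_0 \in \Sigma_1 =: \Sigma^{(1)}$. Suppose inductively that $\Sigma^{(N-1)}$ has been produced as an embedded, horizontally nondegenerate minimal surface. Apply Theorem~\ref{thm5.1} with $\Sigma_1 \leftarrow \Sigma^{(N-1)}$ and $\Sigma_2 \leftarrow \Sigma_N$, gluing the end of $\Sigma^{(N-1)}$ inherited from $P_{N-1}$ to the end $P_N$ of $\Sigma_N$. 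Choose $j_N$ large enough that (i) the nearly minimal surface $\Sigma(j_N)$ coincides with $\Sigma^{(N-1)}$ on the intrinsic ball $B_N(p_0) \subset \Sigma^{(N-1)}$, and (ii) the perturbation $u^{(N)}$ produced by the theorem satisfies $\|u^{(N)}\|_{2,\mu,\kappa} \leq 2^{-N}$. Condition (i) holds because the patching region in the construction preceding Theorem~\ref{thm5.1} recedes to infinity as $j_N$ grows, while (ii) follows from the quantitative bound underlying Theorem~\ref{thm5.1} (the direct analogue of the bound in Theorem~\ref{mgt}), which drives $\|u^{(N)}\|_{2,\mu,\kappa}$ to zero as the gluing parameter grows.

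Given these choices, fix any compact $K \subset \bbH^2 \times \bbR$, say $K \subset B_R(p_0)$. For $N > R$, the surface $\Sigma^{(N)} \cap K$ differs from $\Sigma^{(N-1)} \cap K$ only through the restriction of the graph function $u^{(N)}$ to a region at distance $\geq N-R$ from the new gluing region. Since $\kappa < 0$, the weighted bound (ii) translates on $K$ to a pointwise $\calC^{2,\mu}$ estimate of order $C e^{\kappa(N-R)}\cdot 2^{-N}$, which is summable in $N$. Hence $\Sigma^{(N)} \cap K$ is Cauchy in $\calC^{2,\mu}$, and the limit $\Sigma^{(\infty)} \cap K$ exists. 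Local a priori estimates for the minimal surface equation, together with the exact minimality of each $\Sigma^{(N)}$, upgrade this to $\calC^\infty$ convergence and show that $\Sigma^{(\infty)}$ is a smooth minimal surface. Embeddedness survives since each $\Sigma^{(N)}$ is embedded and the corrections are uniformly small enough to preserve the graphical structure over $\Sigma^{(N-1)}$.

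To verify that $\Sigma^{(\infty)}$ has infinitely many vertical planar ends, observe that at stage $N$ the constituent $\Sigma_N$ contributes at least one new end distinct from the one used in gluing, and condition (i) ensures that the exterior portion of this end, realized in $\Sigma^{(N)}$, is never disturbed by subsequent stages $N+1, N+2, \ldots$ Hence each of these ends persists in the limit and remains asymptotic to a vertical plane. Properness of $\Sigma^{(\infty)}$ also follows from (i): any fixed compact set in $\bbH^2 \times \bbR$ meets only finitely many of the constituent pieces, each with compact intersection.

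The hardest point to pin down carefully is the interplay between (i) and (ii) in the inductive choice, together with the preservation of horizontal nondegeneracy. The latter is handed to us for free by Theorem~\ref{thm5.1} at each finite stage, so the induction proceeds without extra work; however, one must verify that the constants in the quantitative bound of Theorem~\ref{thm5.1} applied to $\Sigma^{(N-1)}$ depend only on local data near the end being glued (which is a fixed vertical planar end model) and on a uniform lower bound for the horizontally restricted Jacobi spectrum of $\Sigma^{(N-1)}$, so that the admissible threshold for $j_N$ does not blow up as $N$ grows. Once this uniformity is checked, the diagonal choice $\|u^{(N)}\|_{2,\mu,\kappa} \leq 2^{-N}$ together with (i) can always be arranged, and the argument proceeds as outlined.
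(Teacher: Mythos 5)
Your proposal is correct and follows essentially the same route as the paper: iterate Theorem~\ref{thm5.1}, choosing the gluing parameter at stage $N$ so large that the correction has norm at most $2^{-N}$ and the modification region recedes to infinity, then pass to the limit by local uniform convergence on compacta, exactly as in the paper's items (a)--(d). Your final worry about uniformity of constants in $N$ is unnecessary: since $j_N$ is chosen \emph{after} the fixed surface $\Sigma^{(N-1)}$ is in hand, the admissible threshold for $j_N$ is allowed to grow with $N$, and the diagonal choice already suffices.
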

Indeed, each of the gluings here is given by Theorem~\ref{thm5.1}, so it remains only to show that one
can pass to the limit. 
For this, construct a sequence of properly embedded minimal surfaces $\{S_N\},$ and two sequences of 
positive real numbers $R_N \nearrow +\infty$ and $\varepsilon_N \searrow 0$ such that: 
\begin{enumerate}[(a)]
\item $S_N$ is obtained by gluing $\Sigma^{(N-1)}$ and $\Sigma^{(N)}$.
\item If $S_N$ is a normal graph of a function $u_N$, then $\|u_N
  \|_{2,\mu} \leq 2^{-N}$.
\item $S_N \setminus B(p_0,R_N)$ consists of (disjoint) neighborhoods
  of the ends of $S_N$, where $p_0$ is a fixed point in $\HH^2 \times
  \RR$.
\item For all $m \geq N$, we have that $S_m \cap B(p_0,R_N)$ lies on a
  $\varepsilon_N$-neighborhood of $S_N$ and can be written as a normal
  graph over $S_N$.
\end{enumerate}

The construction of such sequences is possible since we can choose the neck
separation parameter $D_N$ at the $N^{\mathrm{th}}$ stage sufficiently large. Thus 
it is clear (item (b)) that we can ensure that the sequence of normal graph functions $u_N$ 
converge locally uniformly in $\calC^\infty$ to a function which is uniformly small, so that
embeddedness is maintained (items (c) and (d)), and which decays exponentially along all ends.

This Corollary shows that there exist complete, properly embedded minimal surfaces in $\HH^2 \times \RR$ 
with vertical planar ends, with either finite or infinite genus and with an infinite number of ends.

We conclude this section with a brief remark concerning why the fluxes of horizontal 
catenoids, or of the more general constituent pieces considered in this section, play no role 
in this gluing construction. The reason is that we glue along vertical lines orthogonal to the axis
of the catenoid and positioned very far from it.  Although these lines are not closed, 
they are limits of a sequence of closed curves, namely rectangles lying over regions 
$\{S_1 \leq s \leq S_2; |t| \leq T\}$ where $S_2, T \nearrow \infty$. These rectangles 
are homologically trivial, so the flux over them vanishes, and hence the same is true over the
vertical lines. Because of this, there is no need to balance the fluxes of the summands in this construction against one another.

\section{Deformation theory}
We conclude this paper with a brief analysis of the moduli space of
even, properly embedded complete minimal surfaces with finite total
curvature in $\HH^2 \times \RR$.  Let $\calM_k$ denote the space of
all such surfaces with $k$ ends, each asymptotic to a vertical plane,
and which are symmetric with respect to the reflection ${\cal R}_t$.
\begin{thm}
The space $\calM_k$ is a real analytic set with formal dimension equal to $2k$. There is a stratum of $\calM_k$
consisting of horizontally nondegenerate elements which has dimension exactly equal to $2k$.
\end{thm}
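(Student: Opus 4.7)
The plan is to realize a neighborhood of a surface $\Sigma \in \calM_k$ as the zero set of a real analytic Fredholm map of index $2k$, then apply the implicit function theorem when the surface is nondegenerate, and the Lyapunov--Schmidt reduction in general.

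Fix $\Sigma \in \calM_k$ with asymptotic planes $P_1,\dots,P_k$. Since the space of vertical planes is identified with the two-dimensional real analytic manifold of geodesics in $\HH^2$, configurations of $k$ such asymptotic planes near $(P_1,\dots,P_k)$ form a real analytic parameter space $W$ of dimension $2k$. For $\sigma \in W$ near the basepoint, I would construct a model surface $\Sigma_\sigma$ by using cutoff functions to replace each end of $\Sigma$ with a horizontal graph over the perturbed plane $P_\alpha(\sigma_\alpha)$; this modification introduces nonzero mean curvature only in transition annuli and depends real analytically on $\sigma$, with $\Sigma_0 = \Sigma$. Then define
\[
\calN : W \times e^{\kappa R}\calC^{2,\mu}_{\mathrm{ev}}(\Sigma) \to e^{\kappa R}\calC^{0,\mu}_{\mathrm{ev}}(\Sigma),\qquad \kappa\in(-1,0),
\]
by $\calN(\sigma,u) = H_{\Sigma_\sigma(u)}$, the mean curvature of the normal graph over $\Sigma_\sigma$ with graph function $u$, transferred to a function on $\Sigma$ via a fixed diffeomorphism. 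This map is real analytic, and by Proposition \ref{decayvprop} (applied to arbitrary minimal surfaces in $\calM_k$ near $\Sigma$) every nearby element of $\calM_k$ is uniquely realized as some $\Sigma_\sigma(u)$ with $u$ in the weighted space; thus $\calN^{-1}(0)$ parametrizes a neighborhood of $\Sigma$ in $\calM_k$.

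The linearization at $(0,0)$ decomposes as $(\dot\sigma,\dot u) \mapsto D_\sigma\calN(0,0)\dot\sigma + L_\Sigma\dot u$. By Proposition \ref{Fredholms}, $L_\Sigma$ is Fredholm on the chosen weighted spaces. The key index computation is that $\mathrm{index}(L_\Sigma) = 0$ on $e^{\kappa R}\calC^{2,\mu}_{\mathrm{ev}}$ for $\kappa \in (-1,0)$: by formal self-adjointness, the cokernel is identified with the kernel of $L_\Sigma$ on the dual space $e^{-\kappa R}\calC^{0,\mu}_{\mathrm{ev}}$, and by the far-field expansion of Proposition \ref{asympprop} every Jacobi field on an end is either $O(e^{-r_\alpha})$ or grows at least like $e^{+r_\alpha}$, with no intermediate rates. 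Hence for $\kappa \in (-1,0)$ the kernels of $L_\Sigma$ on $e^{\kappa R}$ and on $e^{-\kappa R}$ coincide (both equal the even decaying Jacobi fields), so $\mathrm{index}(L_\Sigma)=0$ and $\mathrm{index}(D\calN(0,0)) = 2k$.

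In the horizontally nondegenerate case the kernel of $L_\Sigma$ on $e^{\kappa R}\calC^{2,\mu}_{\mathrm{ev}}$ vanishes by assumption, so $L_\Sigma$ is an isomorphism. Consequently $D\calN(0,0)$ is surjective with $2k$-dimensional kernel given by the graph of $\dot\sigma \mapsto -L_\Sigma^{-1}D_\sigma\calN(0,0)\dot\sigma$, and the real analytic implicit function theorem produces a real analytic manifold chart of dimension $2k$ on $\calM_k$ near $\Sigma$, showing it is a smooth point. In general, $\calN^{-1}(0)$ is a real analytic subvariety of a Banach manifold, and Lyapunov--Schmidt reduction expresses it locally as the zero set of a real analytic map between finite-dimensional spaces of dimensions $\dim\ker L_\Sigma + 2k$ and $\dim\mathrm{coker}\,L_\Sigma$, with virtual dimension equal to the index $2k$. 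The main obstacle I expect is the index calculation and, relatedly, verifying that the weighted space chart truly exhausts a neighborhood of $\Sigma$ in $\calM_k$: this requires decay estimates for graphs of nearby minimal surfaces which are uniform across the family, and these follow from Proposition \ref{genasymprop}.
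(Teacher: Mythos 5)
Your proposal is correct and follows essentially the same route as the paper: you build the $2k$-parameter family of model surfaces by displacing the asymptotic vertical planes and regluing the ends with cutoffs, then apply the real analytic implicit function theorem on the even weighted H\"older spaces in the horizontally nondegenerate case and a Lyapunov--Schmidt reduction in general. The only difference is that you spell out the index-zero computation for $L_\Sigma$ (via the absence of indicial rates between $e^{-r}$ and $e^{r}$ and formal self-adjointness) and the issue of the chart exhausting a neighborhood of $\Sigma$ in $\calM_k$, details which the paper delegates to \cite{MPU} and \cite{KMP}.
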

\begin{remark}
This dimension count agrees with our construction: indeed, $2k$ is precisely the dimension of the space of admissible 
geodesic networks with $k$ geodesic lines, regardless of the number of `cross-piece' geodesic segments, since
in a given network $\calF$, each geodesic line $\gamma_\alpha$ has a two-dimensional deformation space, and 
any small perturbation of the geodesics uniquely determines the corresponding deformations of the geodesic 
segments $\tau_{\alpha \beta}$.   Note, however, that we are {\it not} demanding here that the minimal surfaces be 
ones that we have constructed. For example, it is conceivable that there exist surfaces whose necks are not 
centered on the plane of symmetry. This analysis of the deformation space is insensitive to this.

We do not factor out by the $3$ dimensional space of `horizontal' isometries of $\HH^2 \times
\RR$. But if we do this, then the dimension count $2k-3$ agrees with the dimension of the family 
of minimal surfaces in \cite{moro1}.
\end{remark}
\begin{proof}
The proof is very similar to the ones in \cite{MPU} and \cite{KMP} (and in several places since then), so we shall
be brief.  A different approach to the moduli space theory -- for minimal surfaces with finite
total curvature and parallel ends -- appears in \cite{PR}, but that relies on a Weierstrass representation 
which is not available here.

Fix $\Sigma \in \calM_k$ and enumerate its vertical planar ends as $\{P_\alpha\}_{\alpha \in A}$, so  
each $P_\alpha = \gamma_\alpha \times \RR$. For any sufficiently small $\e_{\alpha,j} \in \RR$, $j = 1,2$, 
we can deform $\gamma_\alpha$, and hence $P_\alpha$, by displacing the two endpoints of $\gamma_\alpha$ 
by these amounts, respectively (relative to a fixed metric on $\mathbb{S}^1$). Thus small deformations of the entire 
ensemble of vertical planes are in correspondence with $2k$-tuples $\e = (\e_{\alpha, 1}, \e_{\alpha, 2})_{\alpha \in A}$
with $|\e| \ll 1$. 

For each such $\e$, let $\Sigma(\e)$ denote a small deformation
$\Sigma(\e)$ of the surface $\Sigma = \Sigma(0)$, constructed as
follows. For each $\alpha$, write the end $E_\alpha$ of $\Sigma$ as a
normal graph over some exterior region $P_\alpha \setminus O_\alpha$
with graph function $v_\alpha$ defined in polar coordinates for $r
\geq R_0$.  Rotate $P_\alpha$ by the parameters $\e_\alpha$ to obtain
a new vertical plane $P_\alpha(\e)$.  Using the same graph function
$v_\alpha$, now defined on an exterior region in $P_\alpha(\e)$, we
obtain the deformed end $E_\alpha(\e)$; this is quite close to the
original end $E_\alpha$ over the annulus $\{R_0+1 \leq r \leq
R_0+2\}$, so we can write $E_\alpha(\e)$ as the graph of a function
$v_{\alpha,\e}$ defined on this annulus in the {\it original} plane
$P_\alpha$.  Finally, use a fixed cutoff function $\chi_\alpha$ to
define $\tilde{v}_{\alpha,\e} = \chi_\alpha v_\alpha + (1-\chi_\alpha)
v_{\alpha,\e}$ so that the graph of this new function agrees with the
original surface $\Sigma$ for $r \leq R_0 + 1$ and matches up smoothly
with $E_\alpha(\e)$ outside this annulus. This defines
$\Sigma(\e)$. Denoting its mean curvature function by $H(\e)$, then
clearly $H(\e)$ vanishes outside the union of these annuli, hence
$H(\e) \to 0$ in $e^{\kappa R} \calC^{2,\mu}(\Sigma(\e))$ as $|\e| \to
0$.

The remainder of the proof follows the corresponding arguments in \cite{MPU} and \cite{KMP} essentially verbatim. 
When $\Sigma$ is horizontally nondegenerate, the implicit function theorem produces an analytic function 
$\e \mapsto u_\e$ such that the normal graph of $u_\e$ over $\Sigma(\e)$ is minimal. This is a real analytic
coordinate chart in $\calM_k$ around $\Sigma$.  If $\Sigma$ is horizontally degenerate, then we can apply
a Lyapunov-Schmidt reduction argument to show that there exists a neighbourhood $\calU$ of $\Sigma$ in some 
fixed finite dimensional real analytic submanifold $Y$ in the space of all surfaces (with a fixed weighted H\"older regularity) 
and a real analytic function $F: \calU \to \RR$ such that $\calM_k \cap \calU = F^{-1}(0) \cap \calU$. 
\end{proof}

\bibliographystyle{plain}





\end{document}